\theoremstyle{plain}
\newtheorem{thm}{Theorem}[section]
\newaliascnt{cor}{thm}
\newaliascnt{prop}{thm}
\newaliascnt{lem}{thm}
\newaliascnt{rmk}{thm}
\newtheorem{cor}[cor]{Corollary}
\newtheorem{prop}[prop]{Proposition}
\newtheorem{lem}[lem]{Lemma}
\newtheorem{rmk}[rmk]{Remark}
\theoremstyle{definition}
\newaliascnt{defn}{thm}
\newaliascnt{asu}{thm}
\newaliascnt{con}{thm}
\newtheorem{asu}[asu]{Assumption}
\newcounter{stp}
\newcounter{stpi}
\newcounter{stpci}
\newcounter{stpiii}
\theoremstyle{remark}
\newaliascnt{rem}{thm}
\newaliascnt{exa}{thm}
\newaliascnt{masu}{thm}
\newaliascnt{nota}{thm}
\newaliascnt{sett}{thm}
\numberwithin{equation}{section}
\setlist[enumerate]{font = \normalfont}
\renewcommand {\S}	{\mathbb{S}}
\newcommand {\N}	{\mathbb{N}}
\newcommand {\R}	{\mathbb{R}}
\newcommand {\E}	{\mathbb{E}}
\newcommand {\F}	{\mathbb{F}}
\newcommand{\diag}{\mathrm{diag}}
\renewcommand{\d}{\, \mathrm{d}}
\DeclareMathOperator{\re}{Re}
\DeclareMathOperator{\im}{Im}
\DeclareMathOperator{\Id}{Id}
\newcommand{\hra}{\hookrightarrow}
\newcommand{\Hinfty}{\mathcal{H}^\infty}
\renewcommand{\div}{\mathrm{div}}
\newcommand{\sA}{\mathcal{A}}
\newcommand{\sF}{\mathcal{F}}
\newcommand{\sL}{\mathcal{L}}
\newcommand{\sT}{\mathcal{T}}
\newcommand{\sS}{\mathcal{S}}
\newcommand{\sJ}{\mathcal{J}}
\newcommand{\tB}{\tilde{\mathrm{B}}}
\newcommand{\tD}{\tilde{\mathrm{D}}}
\newcommand{\tF}{\tilde{F}}
\newcommand{\tL}{\tilde{\mathrm{L}}}
\newcommand{\tN}{\tilde{N}}
\newcommand{\tX}{\tilde{X}}
\newcommand{\tu}{\tilde{u}}
\newcommand{\tv}{\tilde{v}}
\newcommand{\tw}{\tilde{w}}
\newcommand{\tm}{\tilde{m}}
\newcommand{\tn}{\tilde{n}}
\newcommand{\thseaice}{\tilde{h}}
\newcommand{\ta}{\tilde{a}}
\newcommand{\tb}{\tilde{b}}
\newcommand{\tepsilon}{\tilde{\eps}}
\newcommand{\tsA}{\tilde{\mathcal{A}}}
\newcommand{\tsG}{\tilde{\mathcal{G}}}
\newcommand{\tsM}{\tilde{\mathcal{M}}}
\newcommand{\tsF}{\tilde{\mathcal{F}}}
\newcommand{\tsX}{\tilde{\mathcal{X}}}
\newcommand{\tsY}{\tilde{\mathcal{Y}}}
\newcommand{\eps}{\varepsilon}
\newcommand{\tri}{\triangle}
\newcommand{\trid}{\triangle_\delta}
\newcommand{\Itwo}{\mathrm{I}_2}
\newcommand{\sigd}{\sigma_\delta}
\newcommand{\T}{\mathrm{T}}
\newcommand{\dist}{\mathrm{dist}}
\renewcommand{\phi}{\varphi}
\newcommand{\cB}{\mathcal{B}}
\newcommand{\cD}{\mathcal{D}}
\newcommand{\cQ}{\mathcal{Q}}
\newcommand{\cO}{\mathcal{O}}
\newcommand{\cQD}{\cQ_\cD}
\newcommand{\cQG}{\cQ_\Gamma}
\newcommand{\cQpO}{\cQ_{\partial \mathcal{O}}}
\newcommand{\ccor}{c_{\mbox{\tiny{cor}}}}
\newcommand{\Catm}{C_{\mbox{\tiny{atm}}}}
\newcommand{\Cocean}{C_{\mbox{\tiny{ocean}}}}
\newcommand{\Uatm}{U_{\mbox{\tiny{atm}}}}
\newcommand{\Uocean}{U_{\mbox{\tiny{ocean}}}}
\newcommand{\Ratm}{R_{\mbox{\tiny{atm}}}}
\newcommand{\Rocean}{R_{\mbox{\tiny{ocean}}}}
\newcommand{\ratm}{\rho_{\mbox{\tiny{atm}}}}
\newcommand{\rice}{\rho_{\mbox{\tiny{ice}}}}
\newcommand{\mice}{m_{\mbox{\tiny{ice}}}}
\newcommand{\rocean}{\rho_{\mbox{\tiny{ocean}}}}
\newcommand{\tatm}{\tau_{\mbox{\tiny{atm}}}}
\newcommand{\tocean}{\tau_{\mbox{\tiny{ocean}}}}
\newcommand{\tr}{\mathrm{tr}}
\newcommand{\mbody}{\mbox{m}_{\mbox{\tiny{B}}}}
\newcommand{\rC}{\mathrm{C}}
\newcommand{\rL}{\mathrm{L}}
\newcommand{\rW}{\mathrm{W}}
\newcommand{\rH}{\mathrm{H}}
\newcommand{\rB}{\mathrm{B}}
\newcommand{\rD}{\mathrm{D}}
\newcommand{\AH}{{\mathrm{A}}^{\mathrm{H}}}
\newcommand{\tAH}{\tilde{\mathrm{A}}^{\mathrm{H}}}
\newcommand{\AHm}{{\mathrm{A}}^{\mathrm{H}}_{\mathrm{m}}}
\newcommand{\AHD}{{\mathrm{A}}^{\mathrm{H}}_{\mathrm{D}}}
\newcommand{\tAHm}{\tilde{\mathrm{A}}^{\mathrm{H}}_{\mathrm{m}}}
\title{Rigorous analysis of the interaction problem of Sea Ice with a Rigid Body}
\author{Tim Binz}
\address{Technische Universit\"at Darmstadt\\
	Fachbereich Mathematik\\
	Schlossgartenstrasse 7\\
	64289 Darmstadt, Germany}
\email{binz@mathematik.tu-darmstadt.de}
\author{Felix Brandt}
\address{Technische Universit\"at Darmstadt\\
	Fachbereich Mathematik\\
	Schlossgartenstrasse 7\\
	64289 Darmstadt, Germany}
\email{brandt@mathematik.tu-darmstadt.de}
\author{Matthias Hieber}
\address{Technische Universit\"at Darmstadt\\
	Fachbereich Mathematik\\
	Schlossgartenstrasse 7\\
	64289 Darmstadt, Germany}
\email{hieber@mathematik.tu-darmstadt.de}
\subjclass{35Q86, 35K59, 86A05, 86A10, 74F99}%
\keywords{Sea ice interacting with a rigid body, Hibler's sea ice model, viscous-plastic stress tensor, coupled boundary conditions}%
\begin{document}
	\begin{abstract}
		Consider the set  of equations modeling the motion of a rigid body enclosed in sea ice. Using Hibler's viscous-plastic model for describing sea ice, it  is shown 
		by a certain  decoupling approach that this system admits a unique, local strong solution within the $\rL^p$-setting.
	\end{abstract}

\maketitle

\section{Introduction}
\label{sec:intro}

It is a classical problem in fluid mechanics to study the movement of rigid or elastic bodies immersed in a fluid, see e.g.\ the works of Galdi \cite{Gal:02}, Hoffmann and Starovoitov \cite{HS:99}, 
Desjardins and Esteban \cite{DE:00,DE:99}, Gunzberger, Lee and Seregin \cite{GLS:00}, Feireisl, Hillairet and Ne\v{c}asov\'{a} \cite{FHN:08}, Cumsille, Takahashi and Tucsnak
\cite{CT:08,CT:06}, Geissert, G\"otze and Hieber \cite{GGH:13}, Maity, Raymond, Roy and Vanninathan \cite{RV:14,MRR:20}, and the recent work of Ervedoza, Maity and Tucsnak  \cite{EMT:22}. We also refer to the survey article \cite{GN:18} by Galdi and Neustupa 
for the stationary case. The very recent article \cite{EMT:22} by Ervedoza, Maity and Tucsnak discusses the long-time behaviour of a system accounting for the motion of a rigid body enclosed in a 
viscous incompressible fluid. 

Mathematically, interaction problems of rigid bodies can be described by a moving domain problem coupling a PDE with an ODE. The PDE comes from the geophysical equation which models the surrounding material, whereas the ODE is given by the balance equations for the momentum and angular momentum of the immersed rigid body.
In this article, we investigate the problem of interaction of a rigid body in sea ice. Sea ice as a material exhibits a complex mechanical and thermodynamical behaviour.
A composite of pure ice, liquid brine, air pockets and solid salt is formed by freezing sea water.
As indicated by Feltham \cite{Fel:08} or by Golden \cite{Gol:15}, the details of this formation depend on the laminar or turbulent environmental conditions.
The response of this composite to heating, pressure or mechanical forces is for example different from the response of the (salt-free) glacial ice of ice sheets.
For a recent survey in the Notices of the AMS, see \cite{survey:20}. 

We note that there  is significant interest in understanding the interaction of ice and rigid structures. One particular application is e.g.\ the behaviour of a ship in sea ice.
For numerical simulations of the movements of ships in an ice floe field, we refer e.g.\ to the work of Zhan, Agar, He, Spenced and Molyneux \cite{ZAHSM:10} and Kim, Sawamura \cite{KS:16}; the article  \cite{TP:18} of Tuhkuri and Poloj\"{o}rvi provides a review of ice-structure 
interaction simulations.

W.D.~Hibler suggested in 1979 the governing equations of large-scale sea ice dynamics in a seminal article \cite{Hib:79}. These equations form the basis of many sea ice models in climate science.
Sea ice is here modeled as a material with a very specific constitutive law combined with viscous-plastic rheology. 

During the last decades, various communities have been investigating this set of equations numerically, see e.g.\ \cite{mehalle:21,MK:21,MR:17,SK:18,DWT:15,KDL:15,LT:09}.
Unlike the equations describing atmospheric or oceanic dynamics as e.g.\ the primitive equations, see the seminal article by Cao and Titi \cite{CT:07}, rigorous analysis of the sea ice equations started only very recently by the works of 
Brandt, Disser, Haller-Dintelmann and Hieber \cite{BDHH:22} as well as Liu, Thomas and Titi \cite{LTT:22}. 
The underlying set of equations is a coupled degenerate quasilinear parabolic-hyperbolic system, whose analysis is delicate.

In \cite{BDHH:22}, it is shown by means of the theory of quasilinear evolution equations that a suitable regularization of Hibler's model coupling velocity, thickness and compactness of sea ice is 
locally strongly well-posed and also globally strongly well-posed for initial data close to constant equilibria. 
The approach developed  in \cite{LTT:22} emphasizes the parabolic-hyperbolic character of Hibler's model and proves also local strong well-posedness by means of a different regularization and 
by energy estimates.

It is the aim of this article to present for the first time a rigorous analytical study of the interaction of sea ice with rigid structures.  
We develop an $\rL^p$-theory for strong solutions to the interaction problem of a rigid body trapped in sea ice in a bounded domain. Our main theorem shows local-in-time existence and uniqueness of the interaction problem of sea ice with a rigid body.
Note that sea ice cannot be viewed as a generalized Newtonian fluid in the scope of \cite{GGH:13} or a compressible fluid as in \cite{HM:15}.
The stress tensor, see \eqref{eq:stress tensor} or \eqref{eq:regularizedstresstensor} for its regularized version, is of {\em quasilinear nature}, and it is not possible to express the factors in front of the deformation tensor $\eps$ and its trace $\tr(\eps)$ as a function of the Hilbert-Schmidt norm of the symmetric part of the gradient.
In addition, the ice strength is a function of two unknowns here, namely of the mean ice thickness $h$ and the ice compactness $a$, see \eqref{eq:ice pressure} below.
The linear theory as e.g.\ the maximal $\rL^p$-regularity or the bounded $\Hinfty$-calculus for the associated linearized operator, also referred to as the {\em Hibler operator}, was established only very recently in the work of Brandt, Disser, Haller-Dintelmann and Hieber \cite{BDHH:22}.

The strategy of the proof of our main result is as follows.
Since the domain of the sea ice equation is one of the unknowns, we first transform the problem to a fixed domain.
Our approach in this article is ``monolythic'', i.e., we deal with the entire transformed problem which is still coupled and analyze the linearized operator associated to the coupled sea ice-rigid body system.
Using the results from \cite[Sections~4, 5 and 6]{BDHH:22}, we establish maximal regularity of this linearized operator by means of a decoupling technique.
In addition, the quasilinear terms in the system of PDEs for the sea ice dynamics are estimated by exploiting the regularity of the coordinate transform in conjunction with the respective estimates in \cite[Section~6]{BDHH:22}, and we estimate the nonlinear terms of the ODE. 
At this stage, the quasilinear nature of the stress tensor in Hibler's sea ice model fully comes into play, and we emphasize that these estimates are novel and rely on a variant of nonlinear complex interpolation result due to Bergh \cite{Ber:84}.
In the spirit of our ``monolythic'' approach, we interpret the entire transformed problem as a non-autonomous quasilinear abstract Cauchy problem.
It allows us to use classical local-in-time existence theory instead of developing a more sophisticated fixed point argument as in \cite{GGH:13}, \cite{HMTT:19} or \cite{HM:15}.
Finally, we conclude existence and uniqueness of the original problem via performing the corresponding backwards coordinate transform. 

The above transform to a fixed domain was first introduced by Inoue and Wakimoto \cite{IW:77} and then used by Conca, Cumsille, San Mart\'in, Scheid, Takahashi and Tucsnak \cite{CSMT:00,Tak:03,TT:04,CT:06,CT:08,SMSTT:08} in the context of incompressible fluids, by Geissert, G\"otze and Hieber \cite{GGH:13} in the situation of 
Newtonian and generalized Newtonian fluids, and by Hieber and Murata \cite{HM:15} for the study of compressible fluids.
In the cited articles, this transform is the starting point to show the existence of a unique, local strong solution, or even a global solution in the case of small data, to the coupled system on 
bounded or unbounded fluid domains and in the situation of two or three space dimensions.

In the context of incompressible fluids, the above fluid-structure interaction problem was investigated by many other authors in the weak and strong setting by means of different types of transformations.
The existence of local or global strong solutions has its roots in the works of Galdi \cite{Gal:02}, Galdi and Silvestre \cite{GS:02}, Hoffmann and Starovoitov \cite{HS:99}, Desjardins and 
Esteban \cite{DE:00,DE:99}, Gunzberger, Lee and Seregin \cite{GLS:00} as well as Feireisl, Hillairet and Ne\v{c}asov\'{a} \cite{FHN:08}. 

An abstract framework for linear fluid-solid interaction problems was introduced by Maity and Tucsnak \cite{MT:17}.
Viewing the linearized fluid-solid interaction problems as boundary controlled fluid systems with dynamic boundary feedback and invoking the theory of extrapolation spaces to deal with the boundary conditions, they find that the interaction system can be regarded as a perturbation of the pure fluid equations.
Consequently, they obtain maximal regularity of the linearized fluid-solid system by verifying $\mathcal{R}$-sectoriality by means of a perturbation argument.

Our approach is inspired by the work of Casarino, Engel, Nagel and Nickel \cite{CENN:03} for the question of the generation of a $\rC_0$-semigroup.
In our case, we need maximal regularity, and we use that maximal regularity is preserved under similarity transforms.
In particular, the trace space for the initial data incorporates the boundary and coupling conditions in our situation.
It seems to us that this approach is quicker in our instance.
For fluid-structure problems, both approaches work.
However, in a general abstract framework, our approach allows to deal with a larger class of unbounded feedback operators.

The technique developed in \cite{MT:17} was later also used by  Haak, Maity, Takahashi and Tucsnak \cite{HMTT:19} to show that the interaction of a compressible Navier-Stokes-Fourier fluid with a rigid body is locally strongly well-posed and also globally strongly well-posed for small initial data and by Maity, San Mart\'{i}n, Takahashi and Tucsnak \cite{MSTT:19} in the context of a rigid structure floating in a viscous fluid.

This article is organized as follows.
In \autoref{sec:coupled model and main result}, we briefly recall Hibler's sea ice equations and introduce the interaction problem rigorously.
Afterwards, we give a precise definition of a solution and formulate our main theorem. 
\autoref{sec:coordinate transform} is then devoted to the presentation of the coordinate transform and the computation of the transformed terms. 
In \autoref{sec:auxiliary results}, we provide two auxiliary results concerning non-autonomous quasilinear evolution equations and nonlinear complex interpolation, respectively.
In \autoref{sec:proof main result}, we reformulate the transformed problem as a non-autonomous quasilinear abstract Cauchy problem. Further, we verify the assumptions of the local existence theorem: 
We show maximal regularity of the transformed operator by a 
decoupling argument and establish the Lipschitz estimates of the operator matrix and the right-hand sides.
This results in the proof of the main theorem.

\section{Sea ice interaction with a rigid body and main result}
\label{sec:coupled model and main result}

The present sea ice model is a 2D model, and we consider a bounded domain $\cO \subset \R^2$ of class $\rC^2$.
Moreover, $0<T\le \infty$ represents a positive time.
We denote the time-dependent bounded domain occupied by the rigid body at time $t \in [0,T]$ by $\cB(t)$ and the remaining part of the domain filled by sea ice by $\cD(t) = \cO \setminus \overline{\cB(t)}$.
Moreover, the interface between the body and the sea ice is denoted by $\Gamma(t)$.
To simplify the notation, we introduce $\cD := \cD(0)$, $\cB := \cB(0)$ as well as $\Gamma := \Gamma(0)$.
The outer normal at $\Gamma(t)$ is denoted by $n(t)$, and the sets $\cQD$ and $\cQG$ consist of all points in spacetime where the spatial component is in $\cD(t)$ and $\Gamma(t)$, respectively, i.e.,
\begin{equation*} 
\cQD := \{(t,x) \in \R^3 : t \in (0,T), \enspace x \in \cD(t)\} \qquad \mbox{ and } \qquad \cQG := \{(t,x) \in \R^3 : t \in (0,T), \enspace x \in \Gamma(t)\}.
\end{equation*} 
Note that this can be written as 
\begin{align*}
	\cQD = \dot{\bigcup}_{t \in (0,T)} \{t\} \times \cD(t) \qquad
	\mbox{ and } \qquad 
	\cQG = \dot{\bigcup}_{t \in (0,T)} \{t\} \times \Gamma(t) ,
\end{align*}
where $\dot{\bigcup}$ denotes the disjoint union.
We also use the notation $\cQpO = \{(t,x) \in \R^3 : t \in (0,T), \enspace x \in \partial \cO\}$ in the sequel.

\subsection{Hibler's sea ice model}
\label{ssec:hibler}
\

By $u \colon \cQD \to \R^2$, we denote the horizontal velocity of the sea ice, while $h \colon \cQD \to \R_{+}$ and $a \colon \cQD \to [0,1]$ represent the mean ice thickness and the ice compactness, respectively.
Concerning the mean ice thickness, we impose the constraint that $h > \kappa$ for some small parameter $\kappa > 0$ which indicates the transition to open water. 
More precisely, a value of $h(t,x)$ less than $\kappa$ means that at $(t,x)$, there is open water.
In addition, we suppose that $a \in (\alpha,1-\alpha)$ holds for some small $\alpha > 0$.

Following \cite{Hib:79}, the constitutive law for the ice stress is given by
\begin{equation}\label{eq:stress tensor}
\sigma(u,h,a) = \frac{1}{e^2} \frac{P(h,a)}{\tri(\eps)} \eps + \left(1 - \frac{1}{e^2}\right) \frac{P(h,a)}{2 \tri(\eps)} \tr(\eps) \Itwo - \frac{P(h,a)}{2}\Itwo,
\end{equation}
where $\eps = \eps^{(u)} = \eps(u) = \frac{1}{2}\left(\nabla u + (\nabla u)^\T\right)$ is the deformation tensor, $\Itwo$ denotes the unit matrix in $\R^{2 \times 2}$, $e > 1$ is the ratio of major to minor axes of the elliptical yield curve on which the principal components of the stress lie, and $P$ represents the ice pressure and is defined by
\begin{equation}\label{eq:ice pressure}
P = P(h,a) = p^* h \mathrm{e}^{-c(1-a)},
\end{equation}
for given constants $p^* > 0$ and $c>0$.
In addition, we have
\begin{equation*} 
\triangle^2(\eps) := (\eps_{11}^2 + \eps_{22}^2)\left(1+\frac{1}{e^2}\right) + \frac{4}{e^2}\eps_{12}^2 + 2 \eps_{11} \eps_{22}\left(1-\frac{1}{e^2}\right).
\end{equation*} 

Even though the above law describes an idealized viscous-plastic material, its viscosities become singular if $\triangle$ tends to zero.
Following \cite{BDHH:22} and \cite{MK:21}, see also \cite{KHLFG:00}, we consider for $\delta > 0$ the regularization $\triangle_\delta(\eps) := \sqrt{\delta + \triangle^2(\eps)}$.
Thus, we define the regularized internal ice stress by
\begin{equation}\label{eq:regularizedstresstensor}
	\sigd(u,h,a) := S_\delta(u,h,a) - \frac{P(h,a)}{2} \Itwo := \frac{1}{e^2} \frac{P(h,a)}{\trid(\eps)} \eps + \left(1 - \frac{1}{e^2}\right) \frac{P(h,a)}{2 \trid(\eps)} \tr(\eps) \Itwo - \frac{P(h,a)}{2}\Itwo,
\end{equation}
where $S_\delta$ takes the shape
\begin{equation}\label{eq:Sdelta}
	S_\delta(u,P) := S_\delta (\eps,P) := \frac{P}{2}\frac{\mathbb{S} \eps}{\triangle_\delta (\eps)}.
\end{equation}
Here $\mathbb{S} \colon \R^{2\times 2} \to \R^{2\times 2}$ is a map satisfying
\begin{equation*}
	\mathbb{S} \eps = \begin{pmatrix}
		(1 + \frac{1}{e^2}) \eps_{11} + (1 - \frac{1}{e^2}) \eps_{22} & \frac{1}{e^2} (\eps_{12} + \eps_{21}) \\
		\frac{1}{e^2} (\eps_{12} + \eps_{21}) & (1 - \frac{1}{e^2}) \eps_{11} + (1 + \frac{1}{e^2}) \eps_{22}
	\end{pmatrix}
\end{equation*}
for the deformation tensor $\eps = \eps(u)$. 
Identifying $\eps \in \R^{2 \times 2}$ with the vector $(\eps_{11},\eps_{12},\eps_{21},\eps_{22})^\T$, we find that the map $\mathbb{S}$ corresponds to the positive semi-definite matrix
\begin{equation*}
    \mathbb{S} = \left(\mathbb{S}_{ij}^{kl}\right) = \begin{pmatrix}
    1 + \frac{1}{e^2} & 0 & 0 & 1 - \frac{1}{e^2}\\
    0 & \frac{1}{e^2} & \frac{1}{e^2} & 0\\
    0 & \frac{1}{e^2} & \frac{1}{e^2} & 0\\
    1 - \frac{1}{e^2} & 0 & 0 & 1 + \frac{1}{e^2}
    \end{pmatrix}.
\end{equation*}
Simple comparisons reveal that $\mathbb{S}$ exhibits the symmetries
\begin{equation*}
    \mathbb{S}_{ij}^{kl} = \mathbb{S}_{ji}^{lk} = \mathbb{S}_{kj}^{il} = \mathbb{S}_{kl}^{ij} = \mathbb{S}_{il}^{kj}.
\end{equation*}

With $S_\delta (u,P)$ as introduced in \eqref{eq:Sdelta}, Hibler's operator is defined by
\begin{equation*} 
\AH u := - \div S_\delta (u,P).
\end{equation*} 
Following the calculations in \cite[Section~3]{BDHH:22}, we find that Hibler's operator can be written as an operator in non-divergence form 
\begin{equation}\label{eq:Hibler op nondiv form}
\begin{aligned}
    (\AH u)_i &= -\sum \limits_{j,k,l=1}^2 \frac{P}{2}\frac{1}{\triangle_\delta (\eps)}\left(\mathbb{S}_{ij}^{kl} - \frac{1}{\triangle_\delta ^2 (\eps)}(\mathbb{S} \eps)_{ik} (\mathbb{S} \eps)_{jl} \right) \partial_k \eps_{jl} - 
	\frac{1}{2 \triangle_\delta (\eps)}\sum \limits_{j=1}^2 (\partial_j P) (\mathbb{S} \eps)_{ij}\\
	&= \sum \limits_{j,k,l=1}^2 \frac{P}{2}\frac{1}{\triangle_\delta (\eps)}\left(\mathbb{S}_{ij}^{kl} - \frac{1}{\triangle_\delta ^2 (\eps)}(\mathbb{S} \eps)_{ik} (\mathbb{S} \eps)_{jl} \right) D_k D_l u_j - 
	\frac{1}{2 \triangle_\delta (\eps)}\sum \limits_{j=1}^2 (\partial_j P) (\mathbb{S} \eps)_{ij}
\end{aligned}
\end{equation}
for $i=1,2$ and $D_m = - \mathrm{i} \partial_m$.

Let us now introduce the external forces acting on the sea ice. 
For $\mice = \rice h$ denoting the mass of the sea ice, where $\rice > 0$ is the density, the Coriolis force term is given by $\mice \ccor n \times u$ with Coriolis parameter $\ccor > 0$ and unit vector $n \colon \R^2 \to \R^3$ normal to the surface.
Further, $\mice g \nabla H$ represents the force due to changing sea surface tilt with sea surface dynamic height $H \colon (0,T) \times \R^2 \to [0,\infty)$ and gravity $g$.
The atmospheric wind and oceanic forces are described by the terms $\tatm$ and $\tocean(u)$, respectively, and they take the shape
\begin{equation*} 
\tatm = \ratm \Catm \vert \Uatm \vert \Ratm \Uatm \qquad \mbox{ and } \qquad \tocean(u) = \rocean \Cocean \vert \Uocean - u \vert \Rocean (\Uocean - u),
\end{equation*} 
where $\Uatm$ and $\Uocean$ are the velocity of the surface winds and current, respectively. 
Furthermore, $\Catm$ and $\Cocean$ denote air and ocean drag coefficients, $\ratm$ and $\rocean$ represent the densities for air and sea water, and $\Ratm$ and $\Rocean$ are rotation matrices acting on wind and current vectors.
Except for the sea ice velocity $u$, the quantities involved in $\tatm$ and $\tocean(u)$ are assumed to be constant in space and time for simplicity.
In particular, $\tatm$ and $\tocean(u)$ are independent of the mean ice thickness $h$ and the ice compactness $a$.

In the sequel, we use $f_1$ to denote the external force terms in the momentum equation, so
\begin{equation}\label{eq:rhsseaice}
	f_1(u,h) = - \mice \ccor n \times u - \mice g \nabla H + \tatm + \tocean(u).
\end{equation}

For $f \in \rC_b^1([0,\infty);\R)$ denoting an arbitrary function describing the ice growth rate, for instance the one proposed by Hibler \cite[Section~3]{Hib:79}, the thermodynamic terms in the balance laws are given by
\begin{align}\label{eq:S_h}
	S_h = f\left(\frac{h}{a}\right)a + (1-a)f(0)
\end{align}
and
\begin{align}\label{eq:S_a}
	S_a = \begin{cases*} \frac{f(0)}{\kappa}(1-a),& if $f(0) > 0$, \\ 0, \quad & if $f(0) < 0$, 
	\end{cases*}
	\quad + \quad \begin{cases*} 0,& if $S_h > 0$, \\ \frac{a}{2 h}S_h, & if $S_h < 0$.
	\end{cases*}    
\end{align}

Recalling the shape of the regularized stress tensor $\sigd$ from \eqref{eq:regularizedstresstensor}, the external force term $f_1$ from \eqref{eq:rhsseaice} and the thermodynamic terms $S_h$ and $S_a$ from \eqref{eq:S_h} and \eqref{eq:S_a}, respectively, we obtain that the system of equations accounting for the sea ice dynamics is given by
\begin{equation}\label{eq:seaice}
	\left\{ \begin{array}{rll}
		\mice(u_t + u \cdot \nabla u) &= \div\sigd(u,h,a) + f_1(u,h), & \; \mbox{ in } \cQD,\\[2mm]
		h_t + \div(u h) &= d_h \Delta h + S_h(h,a), & \; \mbox{ in } \cQD, \\[2mm]
		a_t + \div(u a) &= d_a \Delta a + S_a(h,a), & \; \mbox{ in } \cQD, \\[2mm]
		u(0) = u_0, \enspace h(0) &= h_0, \enspace a(0) = a_0,  & \; \mbox{ in } \cD,
	\end{array} \right.
\end{equation}
where $\Delta$ denotes the Laplacian, and $d_h > 0$ as well as $d_a > 0$ represent constants.

\subsection{Sea ice interaction with a rigid body}
\label{ssec:model}
\

For simplicity, we use $v=(u,h,a)$ to denote the principle variable of the sea ice equations in the sequel.
Moreover, $\eta \colon (0,T) \to \R^2$ represents the translational velocity, while $\omega \colon (0,T) \to \R$ describes the angular velocity.
With the regularized stress tensor $\sigd$ as in \eqref{eq:regularizedstresstensor}, the balance equations for the momentum and the angular momentum of the rigid body are
\begin{equation}\label{eq:rigidbody}
	\left\{ \begin{array}{rll}
		\mbody \eta'(t) + \int_{\Gamma(t)} \sigd(v)(t,x) n (t,x) \,\mathrm{d} S &= F(t), & \; t \in (0,T),\\[2mm]
		J \omega'(t) + \int_{\Gamma(t)} (x - x_c(t))^\perp \sigd(v)(t,x) n(t,x) \,\mathrm{d} S &= N(t), & \; t \in (0,T), \\[2mm]
		\eta(0) = \eta_0, \enspace \omega(0) &= \omega_0. \\[2mm]
	\end{array} \right.
\end{equation}
The constants $\mbody$ and $J$ represent the body's mass and inertia tensor. As in \cite{CT:08}, the inertia tensor is given by
\begin{equation*}
	J = \int_{\cB(t)} \rho_\cB \vert x - x_c(t) \vert ^2 \,\mathrm{d} x = \int_{\cB(0)} \rho_\cB \vert y \vert ^2 \,\mathrm{d} y.
\end{equation*}
In particular, $J$ is not time-dependent and $(J \omega)'(t) = J \omega'(t)$. 
The functions $F \colon (0,T) \to \R^2$ and $N \colon (0,T) \to \R$ represent external forces and torques.
Further, $x_c$ describes the position of the body's center of gravity, where we suppose that $x_c(0) = 0$ for convenience. 
We observe that $x_c'(t) = \eta(t)$ for $t \in (0,T)$, so $x_c$ can be deduced from $\eta$ by 
\begin{equation}\label{eq:recovering xc}
    x_c(t) = \int_0^t \eta(s) \, \mathrm{d} s.
\end{equation}

For $y = (y_1,y_2)^\T \in \R^2$, we write $y^\perp = (y_2,-y_1)^\T$. The full velocity of the rigid body is then given by
\begin{equation} 
u_{\cB}(t,x) := \eta(t) + \omega(t) (x - x_c(t))^\perp.
\label{eq:uB}
\end{equation} 
The velocity of the rigid body $u_{\cB}$ as given in \eqref{eq:uB} coincides with the velocity of the sea ice $u$ on their interface $\Gamma(t)$ for every time $t \in (0,T)$.
This equation couples the system of PDEs of the sea ice dynamics with the ODEs accounting for the motion of the rigid body. 

The coupled system of the sea ice dynamics as in \eqref{eq:seaice} and the motion of the enclosed rigid body as in \eqref{eq:rigidbody} is completed by boundary conditions for the unknowns $u, h, a, \eta$ and $\omega$ to the following set of equations
\begin{equation}\label{eq:complete system}
	\left\{ \begin{array}{rll}
		\mice(u_t + u \cdot \nabla u) &= \div\sigd(v) + f_1(u,h), & \; \mbox{ in } \cQD,\\[2mm]
		h_t + \div(u h) &= d_h \Delta h + S_h(h,a), & \; \mbox{ in } \cQD, \\[2mm]
		a_t + \div(u a) &= d_a \Delta a + S_a(h,a), & \; \mbox{ in } \cQD, \\[2mm]
		u(t,x) &= \eta(t) + \omega(t) (x - x_c(t))^\perp, & \; \mbox{ on } \cQG, \\[2mm]
		\partial_\nu h = \partial_\nu a &= 0, & \; \mbox{ on } \cQG, \\[2mm]
		u = 0, \enspace \partial_\nu h = \partial_\nu a &= 0, & \; \mbox{ on } \cQpO, \\[2mm]
		u(0) = u_0, \enspace h(0) = h_0, \enspace a(0) &= a_0, & \; \mbox{ in } \cD, \\[2mm]
		\mbody \eta'(t) + \int_{\Gamma(t)} \sigd(v)(t,x) n (t,x) \,\mathrm{d} S &= F(t), & \; t \in (0,T),\\[2mm]
		J \omega'(t) + \int_{\Gamma(t)} (x - x_c(t))^\perp \sigd(v)(t,x) n(t,x) \,\mathrm{d} S &= N(t), & \; t \in (0,T), \\[2mm]
		\eta(0) = \eta_0, \enspace	\omega(0) &= \omega_0. \\[2mm]
	\end{array} \right.
\end{equation}

\subsection{Main result}
\label{ssec:main}
\

In the sequel, we need the following function spaces. 
For an arbitrary open set $O \subset \R^n$, $n \in \N$, $p,q \in (1,\infty)$, $k \in \N$ and $s \ge 0$, we denote by $\rL^q(O)$, $\rC(O)$, $\rC^k(O)$, $\rC^\infty(O)$, $\rW^{s,q}(O)$, $\rH^{s,q}(O)$ and $\rB^{s}_{qp}(O)$ the spaces of {$q$-integrable} functions, continuous functions, $k$-times continuously differentiable functions, smooth functions, (fractional) Sobolev spaces, Bessel potential spaces and Besov spaces, respectively. 
The definitions are standard and can e.g.\ be found in \cite{Tri:78}. 

To make precise the concept of a solution of the system \eqref{eq:complete system}, we need the definition of function spaces on time-dependent domains $\cD(t)$. 
Let $Z \colon \cQD \to \cD$ be a map such that
\begin{equation*}
    \varphi \colon \cQD \to (0,T) \times \cD, \enspace (t,x) \mapsto (t,Z(t,x))
\end{equation*}
is a $\rC^1$-diffeomorphism and $Z(\tau,\cdot) \colon \cD(\tau) \to \cD$ are $\rC^2$-diffeomorphisms for all $\tau \in [0,T]$. 
For any $p,q \in (1,\infty)$, $s \in \{0,1\}$ and $l \in \{0,1,2\}$, we define then
\begin{equation*}
    \rW^{s,p}(0,T;\rW^{l,q}(\cD(\cdot))) := \{f(t,\cdot) \colon \cD(t) \to \R : f \circ \varphi \in \rW^{s,p}(0,T;\rW^{l,q}(\cD))\}.
\end{equation*}
We remark that $\cQD$ is a $\rC^{1,2}$-manifold with spatial boundary $\cQG \cup \cQpO$ where the only chart is $\varphi \colon \cQD \to (0,T) \times \cD$. 
Our definition of time-dependent function spaces coincides with the well known definition of function spaces on a manifold with boundary. 

Next, we introduce the set of suitable initial data for our problem. 
For $\kappa> 0, \alpha > 0$ sufficiently small as introduced at the beginning of \autoref{ssec:hibler}, consider the set
\begin{equation}\label{eq:V}
    \begin{aligned}
        V := \{&(u_0,h_0,a_0,\eta_0,\omega_0) \in \rB_{qp}^{2-\nicefrac{2}{p}}(\cD;\R^2) \times \rB_{qp}^{2-\nicefrac{2}{p}}(\cD)^2 \times \R^2 \times \R : h_0 > \kappa \text{ and } a_0 \in (\alpha,1-\alpha),\\
	    &\partial_\nu h_0 = \partial_\nu a_0 = 0 \text{ on } \partial \cD, \enspace u_0 = 0 \text{ on } \partial \cO \text{ and } u_0 = \eta_0 + \omega_0 x^\perp \text{ on } \Gamma\}.
    \end{aligned}
\end{equation}

Since $\kappa >0$ and $\alpha>0$ are fixed throughout the article, we omit them in the notation of $V$.
We also observe that we consider $p,q \in (1,\infty)$ such that
\begin{equation}\label{eq:condition p and q}
    \frac{2}{p} + \frac{3}{q} < 1
\end{equation}
in the sequel.
As a consequence, it is especially valid that $\nicefrac{2}{p} + \nicefrac{1}{q} < 1$, or, equivalently, $2 - \nicefrac{2}{p} > 1 + \nicefrac{1}{q}$, so it is legit to consider Dirichlet or Neumann boundary conditions for elements in $\rB_{qp}^{2-\nicefrac{2}{p}}(\cD)$, see e.g.\ \cite[Section~5]{Ama:93}.
On the other hand, as \eqref{eq:condition p and q} implies in particular that $\nicefrac{2}{p} + \nicefrac{2}{q} < 1$, which is in turn equivalent to $2 - \nicefrac{2}{p} > 1 + \nicefrac{2}{q}$, we obtain the embedding
\begin{equation}\label{eq:emb Besov C1}
    \rB_{qp}^{2-\nicefrac{2}{p}}(\cD) \hra \rC^1(\overline{\cD})
\end{equation}
from \cite[Theorem~4.6.1]{Tri:78}, so it is justified to impose the pointwise conditions on $h_0$ and $a_0$ in the definition of $V$.

Now, we are in the position to formulate our main result on local-in-time existence and uniqueness of the coupled sea ice-rigid body system \eqref{eq:complete system}. 

\begin{thm}\label{thm:main theorem}
	Let $p,q \in (1,\infty)$ be such that \eqref{eq:condition p and q} is satisfied, let $\cO \subset \R^2$ be a bounded domain of class $\rC^2$, and consider the domains of the rigid body and the fluid at time zero, $\cB$ and $\cD$, respectively.
	Moreover, let $w_0 = (u_0,h_0,a_0,\eta_0,\omega_0) \in V$, where $V$ is defined precisely in \eqref{eq:V}, 
	and suppose that $F \in \rL^p(0,T;\R^2)$ as well as $N \in \rL^p(0,T)$.
	If for some $d>0$ it holds that $\dist(\cB,\partial \cO) > d$, then there exists $T' \in (0,T]$ and a map $Z \in \rC^1([0,T'];\rC^2(\R^2))$ such that $Z(\tau,\cdot) \colon \cD(\tau) \to \cD$ are $\rC^2$-diffeomorphisms for all $\tau \in [0,T']$, and \eqref{eq:complete system} admits a unique solution $(u,h,a,\eta,\omega)$ such that
	\begin{equation*}
		\begin{aligned}
			u &\in \rW^{1,p}(0,T';\rL^q(\cD(\cdot);\R^2)) \cap \rL^{p}(0,T';\rW^{2,q}(\cD(\cdot);\R^2)), \\
			h &\in \rW^{1,p}(0,T';\rL^q(\cD(\cdot))) \cap \rL^{p}(0,T';\rW^{2,q}(\cD(\cdot))), \\
			a &\in \rW^{1,p}(0,T';\rL^q(\cD(\cdot))) \cap \rL^{p}(0,T';\rW^{2,q}(\cD(\cdot))), \\
			\eta &\in \rW^{1,p}(0,T';\R^2), \quad
			\omega \in \rW^{1,p}(0,T') .
		\end{aligned}
	\end{equation*}
\end{thm}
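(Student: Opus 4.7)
The plan is to follow the roadmap sketched in the introduction: transform to a fixed domain, interpret the result as a non-autonomous quasilinear abstract Cauchy problem on the fixed reference configuration, verify the hypotheses of the abstract local existence theorem from \autoref{sec:auxiliary results}, and finally transform back. First I would fix, once and for all, a $\rC^2$-cut-off and the Inoue--Wakimoto-type change of variables $Z \in \rC^1([0,T];\rC^2(\R^2))$ associated to the (as yet unknown) rigid motion; since $\dist(\cB,\partial \cO) > d$, such a $Z$ can be constructed so that $Z(\tau,\cdot)$ is the identity near $\partial \cO$ and equals the rigid body displacement near $\Gamma(\tau)$. Applying $Z$ to \eqref{eq:complete system} yields, on the cylindrical domain $(0,T) \times \cD$, a transformed system in the unknowns $(\tu,\thseaice,\ta,\eta,\omega)$ whose principal part is governed by the transformed Hibler operator $\tAH$ plus two transformed convection-diffusion operators, coupled to the ODE block for $(\eta,\omega)$ through the boundary condition $\tu = \eta + \omega \cdot x^\perp$ on $\Gamma$ and through the boundary integrals on the right-hand side of the ODE.

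Next I would rewrite this transformed system as a single non-autonomous quasilinear problem
\begin{equation*}
\tw'(t) + \sA(t,\tw(t)) \tw(t) = \sF(t,\tw(t)), \qquad \tw(0) = w_0,
\end{equation*}
on the base space $\sX := \rL^q(\cD;\R^2) \times \rL^q(\cD)^2 \times \R^2 \times \R$, where the operator matrix $\sA(t,\tw)$ encodes $\tAH$, the two diffusion operators $-d_h \Delta$ and $-d_a \Delta$, and the ODE, with off-diagonal entries carrying the trace coupling $\tu|_\Gamma = \eta + \omega x^\perp$ and the body-force integrals $\int_\Gamma \sigd \cdot n \,\mathrm{d} S$ and $\int_\Gamma x^\perp \sigd \cdot n \,\mathrm{d} S$. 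The domain of $\sA(t,\tw)$ is carved out of $\rW^{2,q}(\cD;\R^2) \times \rW^{2,q}(\cD)^2 \times \R^2 \times \R$ by the Dirichlet, Neumann and coupling conditions; the trace space associated to the pair $(\sX,\sA)$ is precisely the compatibility space $V$ from \eqref{eq:V}, which is why it was convenient to impose those conditions on the initial data.

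The main analytic step is to establish \textbf{maximal $\rL^p$-regularity} of the linearization $\sA(0,w_0)$. I would freeze coefficients at $w_0$, kill the lower-order coupling, and apply a \emph{decoupling similarity transform} in the spirit of Casarino--Engel--Nagel--Nickel: define a bounded, invertible map on the trace space that subtracts the rigid contribution $\eta + \omega x^\perp$ from $\tu|_\Gamma$, converting the coupled operator into block-triangular form with an independent Hibler block (with homogeneous Dirichlet data) and an independent ODE block. For the sea ice block, the maximal $\rL^p$-regularity and bounded $\Hinfty$-calculus are exactly the content of \cite[Sections~4, 5 and 6]{BDHH:22}; the ODE block is trivially maximally regular; the off-diagonal terms produced by the similarity transform are shown to be lower order (via trace theory and the embedding \eqref{eq:emb Besov C1}) and absorbed by a standard perturbation argument, so maximal regularity of the full operator follows.

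The remaining step, and the hardest one, is to verify the \textbf{Lipschitz estimates} for $(\sA,\sF)$ on balls in the natural phase space $V$: one must show that the coefficients
\begin{equation*}
\frac{P(\thseaice,\ta)}{\trid(\eps)} \left(\mathbb{S}_{ij}^{kl} - \frac{1}{\trid^2(\eps)}(\mathbb{S}\eps)_{ik}(\mathbb{S}\eps)_{jl}\right)
\end{equation*}
appearing in the non-divergence form \eqref{eq:Hibler op nondiv form}, as well as the transport and boundary-integral terms, depend Lipschitz-continuously on $\tw$ in the maximal regularity topology. The quasilinear dependence on $\nabla \tu$ through $\trid(\eps)$ is exactly where the nonlinear complex interpolation result of \autoref{sec:auxiliary results} (after Bergh \cite{Ber:84}) is required: it allows to interpolate the smooth, globally bounded function $\eps \mapsto (\mathbb{S}\eps)/\trid(\eps)$ between $\rL^\infty$ and a higher-order space, yielding the needed Lipschitz continuity in $\rC^1(\overline{\cD})$ via \eqref{eq:emb Besov C1}. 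The regularity of the transform $Z$ and the estimates of \cite[Section~6]{BDHH:22} handle the transport and metric terms, and the boundary integrals in the ODE are controlled by trace theorems together with the same interpolation argument. Once the Lipschitz hypotheses are in place, the abstract local existence theorem yields a unique solution $\tw$ on some $[0,T']$; recovering $x_c$ by \eqref{eq:recovering xc} and applying the inverse coordinate transform then produces the solution $(u,h,a,\eta,\omega)$ on the moving domain in the required regularity class, completing the proof.
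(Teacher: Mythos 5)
Your proposal follows the same overall route as the paper: fixed-domain transform, reformulation as a non-autonomous quasilinear Cauchy problem, maximal regularity of the frozen operator via a Casarino--Engel--Nagel--Nickel-type decoupling, Lipschitz estimates, the abstract existence result, and the backward transform. Two points of organization differ, and one attribution is off. First, the paper keeps the ODE row of the operator matrix $\tsA$ identically zero and places the boundary integrals $\int_\Gamma \sT_\delta \tn \,\mathrm{d}S$ and $\int_\Gamma y^\perp \sT_\delta \tn \,\mathrm{d}S$ entirely into the nonlinear right-hand side $\tsG_2$, whereas you propose to carry them as off-diagonal entries of $\sA$ and to absorb the resulting remainders by a perturbation argument; moreover, the paper's similarity transform $\sS$ is built from the solution operator $L_0$ of the boundary value problem with $\im(L_0) \subset \ker(\AHm + \lambda)$, so that $\sS \sA_\lambda \sS^{-1}$ equals the decoupled upper-triangular matrix \emph{exactly} and maximal regularity transfers by similarity with no perturbation step at all --- your variant also works (the off-diagonal blocks involve the finite-dimensional space $\R^3$ and are relatively bounded with bound zero), but it is closer to the Maity--Tucsnak perturbation scheme that the paper deliberately avoids. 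Second, and more substantively, the Bergh-type interpolation result (\autoref{prop:berghvariante}) is \emph{not} used for the Lipschitz continuity of the quasilinear coefficients of $\tAH$: those are handled directly by H\"older's inequality, the embedding $\rB_{qp}^{2-\nicefrac{2}{p}}(\cD) \hra \rC^1(\overline{\cD})$, and the pointwise estimates of \cite[Lemma~6.2]{BDHH:22}. The interpolation is needed precisely where you mention it only in passing, namely for the ODE forcing terms: one proves $\rL^q$- and $\rW^{1,q}$-Lipschitz bounds for $\tw \mapsto \sT_\delta(\tw)$ and interpolates to $\rW^{\eps+\nicefrac{1}{q},q}(\cD;\R^{2\times 2})$ so that the trace on $\Gamma$ and hence the boundary integrals become well defined and Lipschitz in the trace-space norm; this is also the step that forces the condition $\nicefrac{2}{p} + \nicefrac{3}{q} < 1$. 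Your stated reason for invoking the interpolation is therefore misplaced, but since you do also apply it to the boundary integrals, the argument as a whole goes through.
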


\begin{rmk}
A solution $(u,h,a,\eta,\omega)$ in the regularity class as in \autoref{thm:main theorem} is called a strong solution.
\end{rmk}

\section{Coordinate transform}
\label{sec:coordinate transform}

We present the diffeomorphism accounting for the transform from the moving on the fixed domain in this section, and we also compute the transformed system of equations. 
It is important to note that the coordinate  transform is an unknown part of the solution of our system.
Throughout this section, we consider a fixed pair $(\eta,\omega) \in \rW^{1,p}(0,T;\R^2) \times \rW^{1,p}(0,T)$.
Now, for the matrix
\begin{equation*}
    m(t) = \omega(t) \begin{pmatrix}
		0 & 1\\ -1 & 0
	\end{pmatrix},
\end{equation*}
satisfying $m(t) x = \omega(t) x^\perp$, we take the differential equation
\begin{equation}\label{eq:ODE Z_0}
	\left\{ \begin{array}{rll}
		\partial_t Z_0(t,y) &= m(t) (Z_0(t,y) - x_c(t)) + \eta(t), & \; (0,T) \times \R^2,\\[2mm]
		Z_0(0,y) &= y, & \; y \in \R^2, \\[2mm]
	\end{array} \right.
\end{equation}
into account. 
As discussed in \autoref{ssec:model}, the coordinates are chosen such that the center of gravity of the rigid body at time $0$ is the origin, i.e., $x_c(0) = 0$. 
The corresponding solution then takes the shape $Z_0(t,y) = Q(t)y + x_c(t)$, where $Q(t) \in \mathrm{SO}(2)$ and $Q \in \rW^{2,p}(0,T;\R^{2 \times 2})$ provided $(\eta,\omega) \in \rW^{1,p}(0,T;\R^2) \times \rW^{1,p}(0,T)$. 
It follows from \eqref{eq:recovering xc} and \eqref{eq:ODE Z_0} that $Q$ is the unique solution of 
\begin{equation}\label{eq:Q}
	\left\{ \begin{array}{rll}
		\partial_t Q(t) &= m(t) Q(t), & \; t \in (0,T),\\[2mm]
		Q(0) &= \Id. & \\[2mm]
	\end{array} \right.
\end{equation}
Further, the inverse $Y_0(t)$ of $Z_0(t)$ is given by
\begin{equation*}
    Y_0(t,x) = Q^\T(t)(x - x_c(t)),
\end{equation*}
and it satisfies the differential equation
\begin{equation*}
	\left\{ \begin{array}{rll}
		\partial_t Y_0(t,x) &= - \tm(t) Y_0(t,x) - \xi(t), & \; (0,T) \times \R^2,\\[2mm]
		Y_0(0,x) &= x, & \; x \in \R^2,
	\end{array} \right.
\end{equation*}
with
\begin{equation*}
    \tm(t) := Q^\T(t) m(t) Q(t), \enspace \xi(t) := Q^\T(t) \eta(t).
\end{equation*}

The next step is to modify the diffeomorphisms $Z_0, Y_0$ of $\cD(t)$ and $\cB(t)$ such that they rotate space only in an appropriate open neighborhood of the rotating and translating body, and they must not rotate or translate the outer boundary $\partial \cO$.
Here we follow the strategy of Geissert, G\"otze and Hieber for their consideration of a bounded fluid domain, see \cite[Section~3 and Section~7]{GGH:13}.
However, it is not necessary to include a Bogovski\u{\i} operator in our case, as we are not in the situation of an incompressible fluid.

The new diffeomorphism is now defined implicitly, using an ODE of the form \eqref{eq:ODE Z_0}, namely
\begin{equation}\label{eq:ODE diffeo Z}
	\left\{ \begin{array}{rll}
		\partial_t Z(t,y) &= b(t,Z(t,y)), & \; (0,T) \times \R^2,\\[2mm]
		Z(0,y) &= y, & \; y \in \R^2. \\[2mm]
	\end{array} \right.
\end{equation}
The right-hand side $b$ in \eqref{eq:ODE diffeo Z} determines the modified velocity of the change of coordinates. 
Close to the rigid body, $b$ should be equal to the velocity of the body, while it is supposed to be zero further away. 
Additionally, considering that the rigid body starts from a position with some distance from the boundary of the sea ice domain and moves with a continuous velocity, we restrict the solution to a time that guarantees that a small distance remains.
More precisely, in the statement of \autoref{thm:main theorem}, we assume that $\dist(\cB,\partial \cO) > d$, and we set $b$ such that a distance of $\frac{d}{2}$ between the body and the outer boundary is maintained.
It is also important that $b$ is smooth in the space variables. 
To obtain these properties, we define a cut-off function $\chi \in \rC^\infty(\R^2;[0,1])$ by
\begin{equation*}
    \chi(y) := \begin{cases*} 1,& if $\dist(y,\partial \cO) \ge d$, \\ 0, \quad & if $\dist(y,\partial \cO) \le \frac{d}{2}$,
    \end{cases*}
\end{equation*}
and a time-dependent vector field $b \colon [0,T] \times \R^2 \to \R^2$ by
\begin{equation}\label{eq:shape of b}
    b(t,x) := \chi(x - x_c(t))[m(t)(x-x_c(t)) + \eta(t)].
\end{equation}
We observe that $b \in \rW^{1,p}(0,T;\rC_c^\infty(\R^2))$ by construction.
As we assume that $\dist(\cB,\partial \cO) > d$, i.e., the body starts with a positive distance from the outer sea ice boundary, and by virtue of $x_c(0) = 0$, we obtain that $\dist(x,\partial \cO) \ge d$ for every $x \in \Gamma$.
Therefore,
\begin{equation*}
    b(0,x) = m(0)(x - x_c(0)) + \eta(0) = \omega_0 x^\perp + \eta_0
\end{equation*}
is valid for $x \in \Gamma$, i.e., $b_{|\Gamma} = \omega_0 x^\perp + \eta_0$.

For $(\eta, \omega) \in \rW^{1,p}(0,T;\R^2) \times \rW^{1,p}(0,T)$ the Picard-Lindel\"of theorem implies that the equation \eqref{eq:ODE diffeo Z} admits a unique solution $Z \in \rC^1(0,T;\rC^\infty(\R^2))$. 
Besides, the solution has continuous mixed partial derivatives $\frac{\partial^{\vert \alpha \vert + 1} Z}{\partial t (\partial y_j)^\alpha}$ and $\frac{\partial^{\vert \alpha \vert} Z}{(\partial y_j)^\alpha}$, where $\alpha \in \N_0^3$ is a multi-index.  
Further, the elements of the Jacobi matrix $J_Z$ of the diffeomorphism are of the form
\begin{equation*} 
(J_Z)_{ij}(t,y) = \partial_j Z_i(t,y) = \delta_{ij} + \int_0^t \frac{\partial b_i}{\partial y_j}(s,Z(s,y)) \d s.
\end{equation*} 

A proof of the following lemma can be found in \cite[Section~2]{HM:15}.
\begin{lem}\label{lem:JXinvertible}
	If either $T_0 \in (0,T]$ is small enough or $\| \nabla_y b \|_{\rL^\infty([0,T] \times \R^2;\R^{2 \times 2})} < c$ for some sufficiently small constant $c>0$, then $J_Z(t,\cdot)$ is invertible for every $t \in (0,T_0)$ or even for every $t \in (0,T)$ in the second case. 
\end{lem}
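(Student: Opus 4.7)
My plan is to establish invertibility via a Neumann series, which requires showing that $\|J_Z(t,y) - \Id\|$, taken in operator norm on $\R^{2\times 2}$, is strictly less than $1$ uniformly in $y$ for $t$ in the appropriate interval. Applying the chain rule to the integrand of the given formula for $J_Z$, I rewrite the identity as
\begin{equation*}
J_Z(t,y) = \Id + \int_0^t (\nabla_x b)(s, Z(s,y))\, J_Z(s,y) \d s.
\end{equation*}
By construction of $b$ in \eqref{eq:shape of b}, one has $b \in \rW^{1,p}(0,T; \rC_c^\infty(\R^2;\R^2))$ and in particular $\nabla_x b$ is bounded on $[0,T]\times\R^2$; set $L := \|\nabla_y b\|_{\rL^\infty([0,T]\times\R^2;\R^{2\times 2})}$.

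Taking operator norms in the identity above produces $\|J_Z(t,y)\| \le 1 + L \int_0^t \|J_Z(s,y)\| \d s$, and Gronwall's inequality yields $\|J_Z(t,y)\| \le e^{Lt}$. Reinserting this bound into the integral identity gives the uniform estimate
\begin{equation*}
\|J_Z(t,y) - \Id\| \le L \int_0^t e^{Ls} \d s = e^{Lt} - 1.
\end{equation*}
If $T_0 \in (0,T]$ is chosen so small that $L T_0 < \log 2$, then the right-hand side is strictly less than $1$ for $t \in (0,T_0)$, so $J_Z(t,\cdot)$ is pointwise invertible through the convergent Neumann series. In the second scenario, if $L < c$ for some $c < T^{-1} \log 2$, then the same estimate gives $\|J_Z(t,y) - \Id\| < 1$ on the whole interval $(0,T)$, yielding invertibility throughout.

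The only subtlety to flag is the application of the chain rule that produced the integral identity above, since $b$ has only $\rW^{1,p}$ regularity in time while $Z$ is merely $\rC^1$ in time by construction. Because $b(s,\cdot)\in \rC_c^\infty(\R^2)$ for almost every $s$ and $Z(s,\cdot)\in \rC^\infty(\R^2)$, the chain-rule identity can be verified pointwise in $(s,y)$ and then integrated against the $\rL^p$ time dependence by Fubini; this is routine. Accordingly, no serious obstacle arises, consistent with the short proof alluded to in \cite{HM:15}.
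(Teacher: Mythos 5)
Your proof is correct and follows essentially the same route as the argument the paper defers to in \cite{HM:15}: derive the variational integral identity for $J_Z$, bound $\|J_Z(t,y)-\Id\|$ uniformly (you add a Gronwall step to control the $J_Z(s,y)$ factor produced by the chain rule, which is the careful way to read the paper's formula for $(J_Z)_{ij}$), and conclude invertibility by a Neumann series once that bound is below $1$, either for $t<T_0$ small or for all $t<T$ when $\|\nabla_y b\|_\infty<T^{-1}\log 2$. No gaps; your remark on the chain rule under the $\rW^{1,p}$-in-time regularity of $b$ is the right subtlety to flag and is handled adequately.
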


The inverse transform $Y$ of $Z$ satisfies the equation
\begin{equation}\label{eq:transform Y}
	\left\{ \begin{array}{rll}
		\partial_t Y(t,x) &= b^{(Y)}(t,Y(t,x)), & \; (0,T_0) \times \R^2,\\[2mm]
		Y(0,x) &= x, & \; x \in \R^2, \\[2mm]
	\end{array} \right.
\end{equation}
where 
\begin{equation}\label{eq:shape of b(Y)} 
b^{(Y)}(t,y) := - J_Z^{-1}(t,y) b(t,Z(t,y)),
\end{equation} 
which is well-defined for $t < T_0$ in view of \autoref{lem:JXinvertible}. 
It follows immediately that
\begin{equation*}
	J_Z(t,y) J_Y(t,Z(t,y)) = \Id . \label{eq:J_Z J_Y}
\end{equation*}
Note that by this definition, $b^{(Y)}$ and $Y$ possess the same space and time regularity as $b$ and $Z$.
We emphasize that the diffeomorphism $Z$ accounts for the transform from the moving domain to the fixed domain, and we observe that $Z$ and $Y$ coincide with $Z_0$ and $Y_0$ provided the rigid body is sufficiently far away from the boundary of the sea ice domain, while it holds that $\partial_t Z(t,y) = \partial_t Y(t,x) = 0$ if the rigid body comes close to the boundary.

For $(t,y) \in [0,T) \times \R^2$, $Z$ as in \eqref{eq:ODE diffeo Z} and $Q$ as introduced in \eqref{eq:Q}, we now define
\begin{equation*}
    \begin{aligned}
    \tu(t,y) &:= u(t,Z(t,y)),\\
	\thseaice(t,y) &:= h(t,Z(t,y)),\\
	\ta(t,y) &:= a(t,Z(t,y)),\\
	\xi(t) &:= Q^\T(t) \eta(t),\\
	\Omega(t) &:= \omega(t),\\
	\tF(t) &:= Q^\T(t) F(t),\\
	\tN(t) &:= N(t),\\
	\sT_\delta(\tu(t,y),\thseaice(t,y),\ta(t,y)) &:= Q^\T(t) \sigd(\tu(t,y),\thseaice(t,y),\ta(t,y)) Q(t),\\
	I &:= J,\\
	\tn &:= Q^\T(t) n(t).
    \end{aligned}
\end{equation*}
Using that $Q(t) \in \mathrm{SO}(2)$ as well as $\Omega(t) = \omega(t)$, we infer that
\begin{equation} 
\tm(t)x = Q^\T(t) m(t) Q(t)x = \omega(t) \det(Q(t)) \begin{pmatrix} x_2 \\ - x_1 \end{pmatrix} = \Omega(t) x^\perp.
\label{eq:tilde m}
\end{equation}

The inertia tensor remains unchanged, so time-independence is also preserved for $I$. 
We remark that $\tn$ represents the outer normal at $\cB$. 
Moreover, we compute for $v = (u,h,a)$ and $\tv = (\tu,\thseaice,\ta)$ that
\begin{equation*}
    \int_{\Gamma(t)} \sigd(v)n(t) \,\mathrm{d} S = Q \int_{\Gamma} \sT_\delta(\tv) \tn \,\mathrm{d} S \qquad \mbox{ as well as } \qquad \int_{\Gamma(t)} (x - x_c(t))^\perp \sigd(v) n(t) \,\mathrm{d} S = \int_{\Gamma} y^\perp \sT_\delta(\tv) \tn \,\mathrm{d} S,
\end{equation*}
where we made use of $Q(t) \in \mathrm{SO}(2)$ to establish that $y^\perp Q z = (Q^\T y^\perp)z$ holds true for all $y,z \in \R^2$.

Having introduced the transformed translational and angular velocity $\xi$ and $\Omega$, we summarize the procedure to construct $Z$ and $Y$ from given $\xi$ and $\Omega$.

\begin{rmk}\label{rmk:procedure Z and Y from xi and Omega}
    Given $(\xi,\Omega) \in \rW^{1,p}(0,T;\R^2) \times \rW^{1,p}(0,T)$, we first obtain $Q \in \rW^{2,p}(0,T;\R^{2 \times 2})$ by solving 
    \begin{equation*}
        \partial_t Q^\T(t) = \tm(t) Q^\T(t), \quad t \in (0,T), \quad Q^\T(0) = \Id, 
    \end{equation*}
    where $\tm(t)x = \Omega(t) x^\perp$ for $x \in \R^2$ as in  \eqref{eq:tilde m}.
    Next, we derive the original translational and angular body velocities from $\eta(t) = Q^\T(t)\xi(t)$ and $\omega(t) = \Omega(t)$, and we then set, as in \eqref{eq:shape of b}, 
    \begin{equation*}
        b(t,x) = \chi(x - x_c(t))[m(t)(x-x_c(t)) + \eta(t)],
    \end{equation*}
    where $x_c(t)$ is recovered from the aforementioned $\eta(t)$ via $x_c(t) = \int_0^t \eta(s) \,\mathrm{d} s$, see \eqref{eq:recovering xc}.
    Having the right-hand side $b$ at hand, we solve \eqref{eq:ODE diffeo Z} to deduce $Z$.
    For $T_0$ as in \autoref{lem:JXinvertible} and $t \in (0,T_0)$, we set $b^{(Y)}(t,y) = - J_Z^{-1}(t,y) b(t,Z(t,y))$ as in \eqref{eq:shape of b(Y)} and subsequently solve \eqref{eq:transform Y} to obtain the inverse $Y$ of $Z$.
\end{rmk}

The following estimates can be concluded from the procedure described in \autoref{rmk:procedure Z and Y from xi and Omega}, see \cite[Proposition~6.1]{GGH:13} and the lemmas thereafter for a proof.
For simplicity, we use $\| \cdot \|_{\infty,\infty}$ to denote the norm associated to $\rL^\infty(0,T;\rL^\infty(\R^2))$.
In addition, the index $i \in \{1,2\}$ means that the corresponding diffeomorphism is associated to $\xi_i$ and $\Omega_i$.

\begin{prop}\label{prop:dep Z and Y on xi and Omega}
    Let $T>0$, and assume that $(\xi_1,\Omega_1), (\xi_2,\Omega_2) \in \rW^{1,p}(0,T;\R^2) \times \rW^{1,p}(0,T)$.
    For $i \in \{1,2\}$, it then holds that $Z_i, Y_i \in \rC^1(0,T_0;\rC^\infty(\R^2))$, and the estimates
    \begin{equation*}
        \begin{aligned}
            \| \partial^\alpha Z_i \|_{\infty,\infty} + \| \partial^\alpha Y_i \|_{\infty,\infty} &\le C(K_i) \quad \text{as well as}\\
            \| \partial^\beta (Z_1 - Z_2) \|_{\infty,\infty} + \| \partial^\beta (Y_1 - Y_2) \|_{\infty,\infty} &\le C(K_i) T (\|\xi_1 - \xi_2 \|_{\rL^\infty(0,T;\R^2)} + \| \Omega_1 - \Omega_2 \|_{\rL^\infty(0,T)})
        \end{aligned}
    \end{equation*}
    are valid for all multi-indices $\alpha, \beta$ such that $1 \le |\alpha| \le 3$ and $0 \le |\beta| \le 3$.
    The constants only depend on the norms $K_i := \| \xi_i \|_{\rL^\infty(0,T;\R^2)} + \| \Omega_i \|_{\rL^\infty(0,T)}$, but they do not depend on $\xi_i$ or $\Omega_i$ directly.
\end{prop}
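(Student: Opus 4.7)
My plan is to follow the construction recipe of \autoref{rmk:procedure Z and Y from xi and Omega} in stages---$Q \to \eta \to x_c \to b \to Z \to Y$---and at each stage establish both a uniform bound in terms of $K_i$ and a Lipschitz estimate that picks up a factor of $T$ from integration against vanishing initial data. All steps ultimately reduce to linear ODEs with controlled coefficients, so the tool of choice is Gronwall's inequality applied iteratively.

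For the uniform bounds, I would first use $Q_i \in \mathrm{SO}(2)$, which already gives $\|Q_i\|_\infty \le \sqrt{2}$, while the ODE for $Q_i^\T$ from \autoref{rmk:procedure Z and Y from xi and Omega} controls $\partial_t Q_i$. Propagating through $\eta_i = Q_i^\T \xi_i$, $x_c^{(i)}(t) = \int_0^t \eta_i(s)\d s$, and the explicit formula \eqref{eq:shape of b} for $b_i$, and using that $\chi \in \rC_c^\infty(\R^2)$, one obtains $\|\partial^\gamma b_i\|_{\rL^\infty((0,T) \times \R^2)} \le C(K_i)$ for every $|\gamma| \le 3$. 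Differentiating \eqref{eq:ODE diffeo Z} once in $y_j$ yields the linear variational equation
\begin{equation*}
    \partial_t (\partial_j Z_i) = (\nabla_x b_i)(t, Z_i)\, \partial_j Z_i, \qquad \partial_j Z_i(0,y) = e_j,
\end{equation*}
so Gronwall gives $\|\partial_j Z_i\|_{\infty,\infty} \le C(K_i)$. Differentiating further produces, via the Fa\`a di Bruno formula, linear ODEs for $\partial^\alpha Z_i$ whose coefficients involve only $\partial^\gamma b_i$ and already-controlled $\partial^{\alpha'} Z_i$ with $|\alpha'| < |\alpha|$; iterating for $|\alpha| = 2, 3$ closes the first bound. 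The corresponding bound on $\partial^\alpha Y_i$ then follows from $J_{Y_i}(t, Z_i) = J_{Z_i}^{-1}$ combined with \autoref{lem:JXinvertible} and the chain rule.

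The Lipschitz estimates proceed analogously. The difference $Q_1^\T - Q_2^\T$ satisfies $\partial_t(Q_1^\T - Q_2^\T) = \tm_1 (Q_1^\T - Q_2^\T) + (\tm_1 - \tm_2) Q_2^\T$ with zero initial data, which by Gronwall yields $\|Q_1 - Q_2\|_\infty \le C(K_i) T \|\Omega_1 - \Omega_2\|_{\rL^\infty(0,T)}$. Feeding this into the chain $\eta_i \to x_c^{(i)} \to b_i$ yields $\|\partial^\gamma(b_1 - b_2)\|_\infty \le C(K_i) T (\|\xi_1 - \xi_2\|_\infty + \|\Omega_1 - \Omega_2\|_\infty)$ for $|\gamma| \le 3$. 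For $Z_1 - Z_2$, the ODE
\begin{equation*}
    \partial_t(Z_1 - Z_2) = \bigl[b_1(t, Z_1) - b_1(t, Z_2)\bigr] + \bigl[(b_1 - b_2)(t, Z_2)\bigr], \qquad (Z_1 - Z_2)(0,y) = 0,
\end{equation*}
combined with the Lipschitz estimate on $b_1$ in the first bracket and the already-established $T$-smallness of $b_1 - b_2$ in the second bracket, closes the $|\beta| = 0$ case by Gronwall. For $|\beta| = 1, 2, 3$ I differentiate the ODE further, and the resulting source terms telescope into lower-order differences plus differences in the $b_i$'s, so iterated Gronwall closes the estimate. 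The bound on $\partial^\beta(Y_1 - Y_2)$ is obtained by the same reasoning applied to \eqref{eq:transform Y}.

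The principal technical obstacle is the combinatorial bookkeeping of the Fa\`a di Bruno expansions for $|\alpha| = 3$ and $|\beta| = 3$: each differentiation of $b_i(t, Z_i)$ produces polynomial combinations of $\partial^\gamma b_i$ evaluated at $Z_i$ multiplied with products of lower-order derivatives of $Z_i$, and for the Lipschitz estimates one must check that every resulting term telescopes into quantities already controlled at the previous induction level, carrying the correct power of $T$. Since this accounting is carried out in full detail in \cite[Proposition~6.1]{GGH:13} for a mathematically identical setup, my actual write-up would invoke that reference directly after verifying that the present $b$ built via \eqref{eq:shape of b} enjoys the same regularity as the one used there.
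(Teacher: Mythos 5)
Your proposal is correct and matches the paper's treatment: the paper itself gives no independent argument but simply defers to \cite[Proposition~6.1]{GGH:13} and the lemmas thereafter, and your Gronwall/variational-equation sketch through the chain $Q \to \eta \to x_c \to b \to Z \to Y$ is an accurate reconstruction of exactly that cited proof. Since you also end by invoking the same reference after checking that $b$ from \eqref{eq:shape of b} has the required regularity, the two routes coincide.
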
 

Recalling Hibler's operator $\AH$ from \eqref{eq:Hibler op nondiv form}, we denote the coefficients of the principal part of $\AH$ by 
\begin{equation}
	{a_{ij}^{kl}(\nabla u,P) := \frac{P}{2}\frac{1}{\triangle_\delta (\eps)}\left(\mathbb{S}_{ij}^{kl} - \frac{1}{\triangle_\delta ^2 (\eps)}(\mathbb{S} \eps)_{ik} (\mathbb{S} \eps)_{jl} \right)}.  
	\label{eq:coeff}  
\end{equation}
Then for sufficiently smooth initial data, its linearization at $v_0 = (u_0,h_0,a_0)$ is given by 
\begin{equation}
	[\AH(v_0)u]_i = \sum_{j,k,l=1}^2 a_{ij}^{kl}(\eps(u_0),P(h_0,a_0))D_k D_l u_j  - \frac{1}{2 \triangle_\delta (\eps(u_0))}\sum \limits_{j=1}^2 (\partial_j P(h_0,a_0)) (\mathbb{S} \eps(u))_{ij}.
	\label{eq:Hibler op}
\end{equation} 
We start by calculating the transformed symmetric part of the gradient. 
Using the chain rule and arguing similarly as in \cite[Section~9]{GGH:13}, we obtain
\begin{equation*}
    \begin{aligned}
        2 \eps_{ij}^{(u)}(t,x) 
        &= (\partial_i u_j)(t,x) + (\partial_j u_i)(t,x)\\
        &= \sum_{k=1}^2 (\partial_i Y_k)(t,Z(t,y)) \partial_k \tu_j(t,y) + (\partial_j Y_k)(t,Z(t,y)) \partial_k \tu_i(t,y) =: 2 \tepsilon_{ij}^{(\tu)}(t,y),
    \end{aligned}
\end{equation*}
where $\tepsilon = \tepsilon^{(\tu)} = \tepsilon(\tu)$ denotes the transformed symmetric part of the gradient.

Inserting $\tepsilon(\tu)$ as well as $\thseaice$ and $\ta$ into the Hibler operator from \eqref{eq:Hibler op nondiv form} and computing the transformed derivatives, we calculate that the transformed Hibler operator is given by
\begin{equation}\label{eq:Hibler op transformed}
    \begin{aligned}
        \tAH(t,\tw) \tu
	    &= -\sum_{j,k,l,m=1}^2 a_{ij}^{klm}(\tepsilon(\tu),P(\thseaice,\ta)) \partial_m \tepsilon_{jl}(\tu)\\
	    &\quad \quad - \frac{1}{2 \triangle_\delta(\tepsilon(\tu))} p^\ast \mathrm{e}^{-c(1-\ta)} \sum_{j,k=1}^2 \partial_j Y_k \bigl(\partial_k \thseaice + c \partial_k \ta\bigr) (\S \tepsilon(\tu))_{ij},
    \end{aligned}
\end{equation}
where $\tw = (\tu,\thseaice,\ta,\xi,\Omega)$, $a_{ij}^{klm}(\tepsilon(\tu),P(\thseaice,\ta)) = (\partial_k Y_m) a_{ij}^{kl}(\tepsilon(\tu),P(\thseaice,\ta))$ and
\begin{equation*}
	\partial_m \tepsilon_{jl}(\tu) = \frac{1}{2} \sum_{n=1}^2 \bigl((\partial_m \partial_j Y_n) \partial_n \tu_l + (\partial_j Y_n) \partial_m \partial_n \tu_l + (\partial_m \partial_l Y_n) \partial_n \tu_j + (\partial_l Y_n) \partial_m \partial_n \tu_j\bigr).
\end{equation*}
Note that the right-hand side in \eqref{eq:Hibler op transformed} can be regarded as a function depending on $(t,\tu,\thseaice,\ta,\xi,\Omega) = (t,\tw)$ by virtue of \autoref{rmk:procedure Z and Y from xi and Omega}.
Recalling $P = p^\ast h e^{-c(1-a)}$, we find that the lower order terms corresponding to $\div \frac{P}{2} \Itwo$ transform to
\begin{equation} 
(\tB_1(t,\tw) \thseaice)_i = \frac{p^\ast \mathrm{e}^{-c(1-\ta)}}{2} \sum_{j=1}^2 (\partial_i Y_j) \partial_j \thseaice \qquad \mbox{ and } \qquad 
(\tB_2(t,\tw) \ta)_i = \frac{c p^\ast \thseaice \mathrm{e}^{-c(1-\ta)}}{2} \sum_{j=1}^2 (\partial_i Y_j) \partial_j \ta,
\label{eq:B_1,2}
\end{equation} 
where the dependence on $(t,\tw)$ is again implied by \autoref{rmk:procedure Z and Y from xi and Omega}.
Introducing the metric contravariant tensor 
\begin{equation}\label{eq:gij}
g^{ij} := g^{ij}(t,\xi,\Omega) = \sum_{k=1}^2 (\partial_k Y_i) (\partial_k Y_j),
\end{equation}
we determine the transformed Laplacian operators to be given by
\begin{equation} 
    \tL \thseaice := \tL(t,\xi,\Omega) \thseaice = \sum_{j=1}^2 (\Delta Y_j) \partial_j \thseaice + \sum_{j,k=1}^2 g^{jk} \partial_k \partial_j \thseaice,
\label{eq:transformed Laplacian h,a}
\end{equation} 
and the right-hand side can be regarded as a function of $(t,\tw)$ thanks to \autoref{rmk:procedure Z and Y from xi and Omega}.
The shape of $\tL\ta := \tL(t,\xi,\Omega) \ta$ is completely analogous.

Next, we observe that the force terms $f_1$, $S_h$ and $S_a$ do not contain any derivatives.
Therefore, we get the respective transformed terms by simply inserting the transformed variables, i.e., $f_1(\tu,\thseaice)$, $S_h(\thseaice,\ta)$ and $S_a(\thseaice,\ta)$, where we recall the respective shapes from \eqref{eq:rhsseaice}, \eqref{eq:S_h} and \eqref{eq:S_a}.

We continue by determining the transformed bilinear terms.
To this end, we calculate
\begin{equation}\label{eq:transformed transport terms}
        ((u \cdot \nabla) u)_k
        = \sum_{i,j=1}^2 \tu_i (\partial_i Y_j) \partial_j \tu_k \qquad \mbox{ and } \qquad \div(u h) = \sum_{i,j=1}^2 (\partial_i Y_j) \bigl(\tu_i \partial_j \thseaice + \thseaice \partial_j \tu_i\bigr).
\end{equation}
The transformed time derivative of a function $b$ is given by
\begin{equation}\label{eq:transformed time derivative}
    b_t = \tb_t + \sum_{j=1}^2 \dot{Y_j} \partial_j \tb,
\end{equation}
where $\dot{Y_j}$ denotes the time derivative of $Y_j$.

Using the shapes of the transformed time derivative from \eqref{eq:transformed time derivative} and the transformed transport terms from \eqref{eq:transformed transport terms}, respectively, and recalling for $\tw = (\tu,\thseaice,\ta,\xi,\Omega)$ the transformed Hibler operator $\tAH(t,\tw)$ from \eqref{eq:Hibler op transformed}, the transformed lower order terms $\tB_1(t,\tw)$ and $\tB_2(t,\tw)$ from \eqref{eq:B_1,2}, $f_1$ from \eqref{eq:rhsseaice}, $\tL$ from \eqref{eq:transformed Laplacian h,a}, $S_h$ from \eqref{eq:S_h} as well as $S_a$ from \eqref{eq:S_a}, we obtain that \eqref{eq:complete system} rewrites as the following system on a cylindrical domain $(0,T) \times \cD$ when applying the transform to a fixed domain:
\begin{equation}\label{eq:transformed system}
	\left\{
	\begin{aligned}
		\rice \thseaice \left(\tu_t + \sum_{j=1}^2 \dot{Y_j} \partial_j \tu + \sum_{i,j=1}^2 \tu_i (\partial_i Y_j) \partial_j \tu\right) &= -\tAH(t,\tw)\tu + \tB_1(t,\tw) \thseaice\\
		&\quad + \tB_2(t,\tw) \ta +  f_1(\tu,\thseaice), &&\text{ in } (0,T) \times \cD,\\
		\thseaice_t + \sum_{j=1}^2 \dot{Y_j} \partial_j \thseaice + \sum_{i,j=1}^2 (\partial_i Y_j) \bigl(\tu_i \partial_j \thseaice + \thseaice \partial_j \tu_i\bigr) &= d_h \tL(t,\xi,\Omega) \thseaice + S_h(\thseaice,\ta), &&\text{ in } (0,T) \times \cD,\\
		\ta_t + \sum_{j=1}^2 \dot{Y_j} \partial_j \ta + \sum_{i,j=1}^2 (\partial_i Y_j) \bigl(\tu_i \partial_j \ta + \ta \partial_j \tu_i\bigr) &= d_a \tL(t,\xi,\Omega)\ta +  S_a(\thseaice,\ta), &&\text{ in } (0,T) \times \cD,\\
		\tu(t,y) &= \xi(t) + \Omega(t) y^\perp, &&\text{ on } (0,T) \times \Gamma, \\
		\partial_\nu \thseaice = \partial_\nu \ta &= 0, &&\text{ on } (0,T) \times \Gamma, \\
		\tu = 0, \enspace \partial_\nu \thseaice = \partial_\nu \ta &= 0, &&\text{ on } (0,T) \times \partial \cO, \\
		\tu(0) = u_0, \enspace \thseaice(0) = h_0, \enspace \ta(0) &= a_0, &&\text{ on } \cD, \\
		\mbody \xi'(t) + Q \int_{\Gamma} \mathcal{T}_\delta(\tu,\thseaice,\ta)(t,y) \tn (t,y) \,\mathrm{d} S &= \tF(t) + \mbody \Omega \xi^\perp, &&\, t \in (0,T),\\
		I \Omega'(t) + \int_{\Gamma} y^\perp \mathcal{T}_\delta(\tu,\thseaice,\ta)(t,y) n(t,y) \,\mathrm{d} S &= \tN(t), && \; t \in (0,T), \\
		\xi(0) = \eta_0, \enspace \Omega(0) &= \omega_0. && \\
	\end{aligned}
	\right. 
\end{equation}

Our main result, \autoref{thm:main theorem}, can now be rephrased as follows.

\begin{thm}\label{thm:main thm transformed}
	Let $p,q \in (1,\infty)$ be such that \eqref{eq:condition p and q} is fulfilled, let $\cO \subset \R^2$ be a bounded domain of class $\rC^2$, and consider the domains of the rigid body and the fluid at time zero, $\cB$ and $\cD$, respectively.
	Moreover, let $\tw_0 = (\tu_0,\thseaice_0,\ta_0,\xi_0,\Omega_0) \in V$, where $V$ is defined precisely in \eqref{eq:V}, and suppose that $F \in \rL^p(0,T;\R^2)$ as well as $N \in \rL^p(0,T)$.
	If for some $d>0$ it holds that $\dist(\cB,\partial \cO) > d$, then there exists $T' \in (0,T]$ and a map $Z \in \rC^1(0,T';\rC^2(\R^2))$ such that $Z(\tau,\cdot) \colon \cD(\tau) \to \cD$ are $\rC^2$-diffeomorphisms for all $\tau \in [0,T']$, and \eqref{eq:transformed system} admits a unique strong solution
	\begin{equation*}
		\begin{aligned}
			\tu &\in \rW^{1,p}(0,T';\rL^q(\cD;\R^2)) \cap \rL^{p}(0,T';\rW^{2,q}(\cD;\R^2)), \\
			\thseaice &\in \rW^{1,p}(0,T';\rL^q(\cD)) \cap \rL^{p}(0,T';\rW^{2,q}(\cD)), \\
			\ta &\in \rW^{1,p}(0,T';\rL^q(\cD)) \cap \rL^{p}(0,T';\rW^{2,q}(\cD)), \\
			\xi &\in \rW^{1,p}(0,T';\R^2), \qquad \text{and} \qquad 
			\Omega \in \rW^{1,p}(0,T') .
		\end{aligned}
	\end{equation*}
\end{thm}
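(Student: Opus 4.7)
The plan is to recast the transformed system \eqref{eq:transformed system} as a non-autonomous quasilinear abstract Cauchy problem
$\tw'(t) + \sA(t,\tw(t))\tw(t) = \sF(t,\tw(t))$ on the ground space
$X_0 := \rL^q(\cD;\R^2) \times \rL^q(\cD) \times \rL^q(\cD) \times \R^2 \times \R$
for the state $\tw = (\tu,\thseaice,\ta,\xi,\Omega)$, and then apply the local existence result announced in \autoref{sec:auxiliary results}. The operator $\sA(t,\tw_0)$ collects the principal parts: the Hibler operator $\tAH(t,\tw_0)$, the diffusion terms $d_h\tL(t,\xi_0,\Omega_0)$ and $d_a\tL(t,\xi_0,\Omega_0)$, and the coupling of the body ODEs to $\tu,\thseaice,\ta$ through the integrals of $\sT_\delta$ over $\Gamma$. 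The domain of $\sA(t,\tw_0)$ encodes the Dirichlet coupling $\tu_{|\Gamma} = \xi + \Omega y^\perp$, the no-slip condition on $\partial\cO$ and the Neumann conditions for $\thseaice,\ta$, so the natural trace space for the initial data is exactly $V$ as in \eqref{eq:V}. The transport terms, the quadratic $\mbody \Omega\xi^\perp$, the source terms $f_1, S_h, S_a$, and the bilinear terms $\tu_i(\partial_i Y_j)\partial_j\thseaice$ etc.\ are collected in $\sF(t,\tw)$.

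The first main task is to verify maximal $\rL^p$-regularity of $\sA(t,\tw_0)$ for $(t,\tw_0)$ in a neighbourhood of $(0,\tw_0)$. Following the approach of Casarino, Engel, Nagel and Nickel \cite{CENN:03} alluded to in the introduction, I would use a similarity transform that decouples the fluid from the rigid body. Concretely, choose a bounded right inverse $\sE$ of the Dirichlet trace at $\Gamma$ producing $\sE(\xi,\Omega)$ with trace $\xi + \Omega y^\perp$ on $\Gamma$ and vanishing on $\partial\cO$, and substitute $\tu = \tv + \sE(\xi,\Omega)$. After this substitution the principal part is block-triangular, with diagonal blocks given by (i) the Hibler operator on $\cD$ with homogeneous Dirichlet data, (ii) two scalar heat-type operators with Neumann data, and (iii) a finite-dimensional ODE block for $(\xi,\Omega)$; all off-diagonal entries are of lower order. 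Maximal regularity of (i) is the content of \cite[Sections~4, 5 and 6]{BDHH:22}, (ii) is classical, and (iii) is trivial. Since maximal regularity is preserved under similarity transforms and under lower-order bounded perturbations, this yields maximal regularity of $\sA(t,\tw_0)$ on the required neighbourhood.

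The second main task is the Lipschitz continuity of $(t,\tw) \mapsto \sA(t,\tw)$ and $(t,\tw) \mapsto \sF(t,\tw)$ on the maximal regularity space. The transport contributions, the smooth nonlinearities $f_1,S_h,S_a$ and the body-ODE quadratic terms are controlled by standard Sobolev multiplier and H\"older estimates, using the embedding \eqref{eq:emb Besov C1} furnished by \eqref{eq:condition p and q}. The dependence of the coefficients on $\dot Y_j$, on the metric tensor $g^{ij}$ from \eqref{eq:gij} and on the outer normal $\tn$, i.e., on $(\xi,\Omega)$, is Lipschitz by \autoref{prop:dep Z and Y on xi and Omega}. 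The genuinely delicate estimates are those for the quasilinear coefficients $a_{ij}^{kl}$ from \eqref{eq:coeff}, which depend on $(\eps(\tu),P(\thseaice,\ta))$ through the non-smooth factors $1/\trid(\eps)$ and the rank-one term $(\S\eps)_{ik}(\S\eps)_{jl}/\trid^2(\eps)$: a direct chain rule loses too much regularity to land in the multiplier class needed for maximal regularity. I would handle these by combining the quasilinear estimates of \cite[Section~6]{BDHH:22} with the nonlinear complex interpolation result of Bergh \cite{Ber:84} recalled in \autoref{sec:auxiliary results}. I expect this quasilinear estimate, together with the analogous bound for $a_{ij}^{klm}$ and for $\tB_1,\tB_2$ from \eqref{eq:B_1,2}, to be the principal obstacle of the whole argument.

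Once maximal regularity and the Lipschitz estimates are established, the local existence theorem for non-autonomous quasilinear abstract Cauchy problems from \autoref{sec:auxiliary results} directly yields a unique strong solution $(\tu,\thseaice,\ta,\xi,\Omega)$ on some interval $[0,T']$ in the announced regularity class. The diffeomorphism $Z$ is then reconstructed from $(\xi,\Omega)$ as described in \autoref{rmk:procedure Z and Y from xi and Omega}, and \autoref{lem:JXinvertible} guarantees, after possibly shrinking $T'$, that $Z(\tau,\cdot)$ remains a $\rC^2$-diffeomorphism from $\cD(\tau)$ onto $\cD$ for all $\tau \in [0,T']$. Finally, the pointwise constraints $\thseaice>\kappa$ and $\ta \in (\alpha,1-\alpha)$ appearing in the definition of $V$ are preserved for short times by continuity, using once more the embedding \eqref{eq:emb Besov C1}; a last reduction of $T'$ delivers the solution in the full regularity class claimed in the theorem.
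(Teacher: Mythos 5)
Your overall architecture coincides with the paper's: recast \eqref{eq:transformed system} as a non-autonomous quasilinear abstract Cauchy problem, prove maximal $\rL^p$-regularity of the linearization by a decoupling similarity transform in the spirit of \cite{CENN:03}, establish the Lipschitz estimates, and invoke the local existence result of \autoref{subsec:quasilin result}. Your decoupling differs only cosmetically from the paper's: you substitute $\tu = \tv + \sE(\xi,\Omega)$ with a generic bounded lifting $\sE$, whereas the paper conjugates with the specific Dirichlet map $L_0 = \bigl(\tr_\Gamma\big|_{\ker(\AHm+\lambda)}\bigr)^{-1}$, for which the off-diagonal entry $(\AHm+\lambda)L_0R$ vanishes identically; with a generic $\sE$ you pick up an extra bounded entry $(\AHm+\lambda)\sE R \in \sL(\R^3,\rL^q(\cD;\R^2))$, which is harmless but must be absorbed by a perturbation argument you do not mention.

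There is, however, one genuine gap. You place the boundary integrals $\int_\Gamma \sT_\delta(\tv)\,\tn\,\mathrm{d}S$ and $\int_\Gamma y^\perp\sT_\delta(\tv)\,\tn\,\mathrm{d}S$ inside the operator matrix $\sA(t,\tw_0)$, and you then claim that the nonlinear complex interpolation result of Bergh is needed for the \emph{interior} quasilinear coefficients $a_{ij}^{kl}$. Both points are off target. The interior coefficients are handled directly by the estimates of \cite[Lemma~6.2]{BDHH:22} together with the embedding $\rB_{qp}^{2-\nicefrac{2}{p}}(\cD)\hra\rC^1(\overline{\cD})$; no interpolation is required there. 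The place where Bergh's result is genuinely indispensable is the boundary term: the paper puts it into the right-hand side $\tsG_2$ (see \eqref{eq:G2 transformed}), and condition (A2)(iv) then demands a Lipschitz estimate of $\sT_\delta(\tw_1)-\sT_\delta(\tw_2)$ in $\rW^{\eps+\nicefrac{1}{q},q}(\cD;\R^{2\times 2})$ — a space in which the trace on $\Gamma$ is defined — in terms of only the \emph{trace-space} norm $\|\tw_1-\tw_2\|_{\tsX_\gamma}$. Direct estimates only give $\rL^q$ and $\rW^{1,q}$ bounds in terms of $\rB^s_{qp}$ and $\rB^{s+1}_{qp}$, and the intermediate bound is obtained precisely by the nonlinear interpolation of \autoref{prop:berghvariante}; this step is also the reason the theorem requires $\nicefrac{2}{p}+\nicefrac{3}{q}<1$ rather than merely $\nicefrac{2}{p}+\nicefrac{2}{q}<1$. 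If you instead insist on keeping the boundary integrals in $\sA$, you must explain how a term that is \emph{nonlinear} in $\tv$ through $\trid(\eps(\tv))$ is written in the quasilinear form $A(t,\tw)\tw$ (i.e., choose a linearization such as freezing $P/\trid(\eps)$ at $\tw$ and letting $\S\eps(\cdot)$ act linearly), verify assumption (A1) for the resulting coefficient differences restricted to $\Gamma$, and check that this unbounded-but-lower-order coupling entry does not destroy the maximal regularity established for the triangular part. None of this is carried out in your proposal, so as written the treatment of the rigid-body forcing terms — which is the most delicate estimate of the whole proof — is missing.
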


\section{Auxiliary results}
\label{sec:auxiliary results}

In this section, we present the tools needed for the proof of our main result, namely the local existence theorem and the variant of a nonlinear complex interpolation result due to Bergh.

\subsection{A local well-posedness result for non-autonomous quasilinear evolution equations}
\label{subsec:quasilin result}

\

Throughout this subsection, we denote by $A$ the quasilinear operator, by $F$ the nonlinear right-hand side and by $u$ the principle variable of the evolution equation.
The non-autonomous quasilinear abstract Cauchy problem takes the shape
\begin{equation}
	\label{eq:quasilinear}
	\left\{
	\begin{aligned} 
		\dot{u} + A(t,u)u &= F(t,u), &&t \in [0,T], \\
		u(0) &= u_0.
	\end{aligned}
	\right. 
\end{equation}

By $X_0$ and $X_1$, we denote the ground space and the regularity space, respectively, and we assume that $X_1 \hra X_0$ is dense, $\F_T:= \rL^p(0,T;X_0)$ is the data space, while $\E_T := \rL^p(0,T;X_1) \cap \rW^{1,p}(0,T;X_0)$ is the maximal regularity space.
Moreover, we denote by $X_\gamma$ the trace space, and it is well known that $X_\gamma = (X_0,X_1)_{1-\nicefrac{1}{p},p}$, see e.g.\ \cite[Proposition~3.4.4]{PS:16}.
The set $V$ represents an open subset of $X_\gamma$.
Additionally, we assume the following structure conditions on the nonlinearities as well as on the linearized operator.

\begin{asu}\label{ass:quasilinear thm}
	(A1) We have $A \in \rC([0,T] \times V,\sL(X_1,X_0))$. 
	Given $u_0 \in V$, there is $R_0 > 0$ such that $\overline{B}_{X_\gamma}(u_0,R_0) \subset V$, and for all $R \in (0,R_0)$, there exists a Lipschitz constant $L(R) > 0$ independent of $\tau$ with
	\begin{equation*}
		\| A(\tau,u_1) v - A(\tau,u_2) v \|_{X_0} \leq L(R) \| u_1 - u_2 \|_{X_\gamma} \cdot \| v \|_{X_1} 
	\end{equation*} 
	for all $\tau \in [0,T]$, $v \in X_1$ and all $u_1,u_2 \in X_\gamma$ with $\| u_i - u_0 \|_{X_\gamma} \leq R$, $i=1,2$.
	
	(A2) For the mapping $F \colon [0,T] \times X_\gamma \to X_0$, we assume 
	\begin{enumerate}[(i)]
		\item $F(\cdot,u)$ is measurable for every $u \in V$,
		\item $F(\tau,\cdot)\in \rC(V,X_0)$ for almost all $\tau \in [0,T]$,
		\item $F(\cdot,u) \in \F_T$ holds for every $u \in V$,
		\item given $u_0 \in V$, for every $R > 0$ such that $\overline{B}_{X_\gamma}(u_0,R) \subset V$, there exists $\varphi_R \in \rL^p(0,T)$ with
		\begin{equation*}
			\| F(\tau,u_1) - F(\tau,u_2) \|_{X_0}
			\leq \varphi_R(\tau) \cdot \| u_1 - u_2 \|_{X_\gamma} 
		\end{equation*}
		for almost all $\tau \in [0,T]$ and all $u_1,u_2 \in X_\gamma$ with $\| u_i - u_0 \|_{X_\gamma} \leq R$, $i=1,2$.
	\end{enumerate}  
	
	(A3) The operator $A(0,u_0)$ has maximal regularity on $X_0$ for every $u_0 \in V$.
\end{asu}

The following proposition yields local existence of a unique strong solution to the above evolution equation \eqref{eq:quasilinear} under the assumptions presented in \autoref{ass:quasilinear thm}.
We remark that a result of this type is well known in the autonomous case, see e.g.\ \cite[Theorem~5.1.1]{PS:16}, and the non-autonomous case also follows by mimicking and slightly adjusting the arguments therein.
We refer to \cite[Section~2]{Pru:03} and \cite[Section~7.1]{Denk:21} for a discussion of the non-autonomous situation.

\begin{prop}\label{prop:quasilinear}
	Let $p \in (1,\infty)$, assume that $u_0 \in V$, and make \autoref{ass:quasilinear thm}. 
	Then there exists $T' \in (0,T]$ such that \eqref{eq:quasilinear} has a unique solution $u \in \E_{T'}$ in $(0,T')$. 
\end{prop}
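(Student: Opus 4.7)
The strategy is the standard reduction to a fixed-point problem in the maximal regularity space, adapted to the non-autonomous setting; see \cite[Theorem~5.1.1]{PS:16} for the autonomous prototype and \cite[Section~2]{Pru:03}, \cite[Section~7.1]{Denk:21} for the non-autonomous extension. Using assumption (A3), I first solve the frozen-coefficient linear problem
\begin{equation*}
    \dot u^* + A(0,u_0) u^* = F(0,u_0), \quad u^*(0) = u_0,
\end{equation*}
to obtain a reference trajectory $u^* \in \E_T$. By the embedding $\E_T \hra \rC([0,T];X_\gamma)$ and absolute continuity of the Lebesgue integral, both $\|u^*\|_{\E_{T'}} \to 0$ and $\|u^* - u_0\|_{\rC([0,T'];X_\gamma)} \to 0$ as $T' \to 0$. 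Writing the candidate solution of \eqref{eq:quasilinear} in the form $u = u^* + v$ with $v$ in the zero-trace subspace $\E^0_{T'} := \{w \in \E_{T'} : w(0) = 0\}$, the problem becomes
\begin{equation*}
    \dot v + A(0,u_0) v = G(v), \quad v(0) = 0, \quad G(v) := F(\cdot,u^* + v) - F(0,u_0) + [A(0,u_0) - A(\cdot,u^* + v)](u^* + v).
\end{equation*}

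Denote by $L_0 \colon \F_{T'} \to \E^0_{T'}$ the solution operator of $\dot w + A(0,u_0) w = f$, $w(0) = 0$; by (A3) it is bounded, and extending functions by zero beyond $T'$ shows that $\|L_0\|$ is bounded uniformly in $T' \in (0,T]$. The same device yields the uniform trace embedding $\E^0_{T'} \hra \rC([0,T'];X_\gamma)$. I then define the fixed-point map $\Phi \colon \E^0_{T'} \to \E^0_{T'}$ by $\Phi(v) := L_0 G(v)$, and iterate on the closed ball $B_\rho := \overline{B}_{\E^0_{T'}}(0,\rho)$. For $\rho$ and $T'$ small enough, the preceding bounds guarantee $\|u^* + v - u_0\|_{\rC([0,T'];X_\gamma)} \le R$ for every $v \in B_\rho$, so the Lipschitz hypotheses (A1) and (A2)(iv) are applicable along the trajectories under consideration.

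The crucial estimate is the Lipschitz bound on $G$: splitting the difference and using (A1), (A2)(iv) together with the uniform trace embedding, for $v_1, v_2 \in B_\rho$ one obtains
\begin{equation*}
    \|G(v_1) - G(v_2)\|_{\F_{T'}} \le C \bigl(\|\varphi_R\|_{\rL^p(0,T')} + L(R) \bigl(\|u^* - u_0\|_{\rC([0,T'];X_\gamma)} + \|u^*\|_{\E_{T'}} + \rho\bigr)\bigr)\|v_1 - v_2\|_{\E^0_{T'}}.
\end{equation*}
Since $\|\varphi_R\|_{\rL^p(0,T')}$, $\|u^* - u_0\|_{\rC([0,T'];X_\gamma)}$ and $\|u^*\|_{\E_{T'}}$ all tend to $0$ as $T' \to 0$, one can fix $\rho$ and then shrink $T'$ so that the coefficient above is smaller than $1/(2\|L_0\|)$; in particular $\Phi$ becomes a strict contraction on $B_\rho$. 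The self-map property follows by an analogous bound on $\Phi(0) = L_0 G(0)$, whose driver $G(0)$ also has $\F_{T'}$-norm vanishing as $T' \to 0$. Banach's fixed-point theorem then produces a unique $v \in B_\rho$, and $u := u^* + v \in \E_{T'}$ is the desired solution; uniqueness is inherited from the contraction estimate.

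The main obstacle is arranging all constants in the contraction estimate to be uniform as $T' \to 0$, so that the small-interval continuity can overcome the norms of $A(0,u_0)$ and $F$. This is precisely why the iteration is set up in the zero-trace subspace $\E^0_{T'}$: extension by zero preserves both the maximal regularity norm and the trace embedding constant, thereby decoupling $\|L_0\|$ and the embedding constant from the length of the interval and allowing the non-autonomous and quasilinear perturbations to be absorbed by shrinking $T'$.
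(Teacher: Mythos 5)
Your argument is the standard maximal-regularity fixed-point proof that the paper itself does not write out, but delegates to \cite[Theorem~5.1.1]{PS:16} and \cite{Pru:03}, \cite{Denk:21}; it is essentially correct and is exactly the intended approach (reference solution for the frozen operator, iteration in the zero-trace subspace $\E^0_{T'}$ with constants uniform in $T'$, contraction via (A1), (A2)(iv) and (A3)). Two cosmetic repairs: the right-hand side of your reference problem should be $F(\cdot,u_0)\in\F_T$ rather than the pointwise value $F(0,u_0)$, which need not be defined since $F(\cdot,u_0)$ is merely measurable and $p$-integrable in time; and your displayed Lipschitz bound for $G$ must also carry the non-autonomous term $\sup_{\tau\le T'}\|A(\tau,u_0)-A(0,u_0)\|_{\sL(X_1,X_0)}$ arising from $[A(0,u_0)-A(\tau,u^*+v_1)](v_1-v_2)$, which likewise tends to zero as $T'\to 0$ by the continuity required in (A1), so the conclusion is unaffected.
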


\subsection{A variant of a nonlinear complex interpolation result}
\label{subsec:bergh variant}

\

This subsection is dedicated to stating a variant of a nonlinear complex interpolation result due to Bergh \cite{Ber:84}, and we first recall the underlying setting.
Let $(E_0,E_1)$ be a couple of complex Banach spaces that are both embedded in a common Hausdorff topological vector space.
By $\Sigma_E$, we denote the sum of the Banach spaces $E_0$ and $E_1$, i.e., $\Sigma_E = E_0 + E_1$, and by $\sF(E_0,E_1)$, we denote the Banach space of all functions $f$ defined on the strip $0 \le \re z \le 1$ in the complex plane such that
\begin{enumerate}[(i)]
	\item $f(z) \in \Sigma_E$ and $f$ is continuous in $\Sigma_E$ on $0 \le \re z \le 1$,
	\item $f(j + \mathrm{i}t) \in E_j$, $j=0,1$, and $f(j + \mathrm{i}\cdot)$ is continuous in $E_j$ with $\lim_{|t| \to \infty} f(j + \mathrm{i}t) = 0$ in $E_j$,
	\item $f$ is analytic in $\Sigma_E$ on $0 < \re z < 1$, \mbox{ and}
	\item $\| f \|_{\sF} := \max_{j=0,1} (\sup_{t \in \R} \| f(j + \mathrm{i}t) \|_{E_j})$.
\end{enumerate}

The complex interpolation space $E_\theta$ is the Banach space of all $f(\theta)$ with $f \in \sF(E_0,E_1)$, where we have chosen $0 < \theta < 1$, and the norm of $a \in E_\theta$ is $\| a \|_\theta := \inf \| f \|_{\sF}$, where $f(\theta) = a$.

A slight modification of the standard arguments in \cite{Ber:84} then yields the following result.

\begin{prop}\label{prop:berghvariante}
	Let $E^1 = (E_0^1, E_1^1)$, $E^2 = (E_0^2,E_1^2)$ and $F = (F_0,F_1)$ be three couples of complex Banach spaces.
	Assume in addition that the (not necessarily linear) operator $N \colon \Sigma_{E^1} \times \Sigma_{E^2} \to \Sigma_F$ fulfills
	\begin{enumerate}[(i)]
		\item $f_k \in \sF(E_0^k,E_1^k)$ implies $N f = N(f_1,f_2) \in \sF(F_0,F_1)$, and
		\item $\| N(a_1,a_2)\|_{F_j} \le \mu(\| a_1 \|_{E_j^1},\| a_2 \|_{E_j^2})$, $a_1 \in E_j^1$, $a_2 \in E_j^2$, $j=0,1$,
	\end{enumerate}
	where $\mu$ is continuous and positive on $\R_{\ge 0} \times \R_{\ge 0}$, and it additionally satisfies the following monotonicity property
	$$
	\mu(x.y) \le \mu (\Tilde{x},y) \mbox{ and } \mu(x,y) \le \mu(x,\Tilde{y}) \mbox{ for all } x,y,\Tilde{x},\Tilde{y} \in \R_{\ge 0} \mbox{ such that } x \le \Tilde{x} \mbox{ and } y \le \Tilde{y}.
	$$
	Then, $a = (a_1,a_2) \in E_\theta^1 \times E_\theta^2$ implies that $N(a_1,a_2) \in F_\theta$, and it is valid that
	$$
	\| N(a_1,a_2) \|_{F_\theta} \le \mu(\| a_1 \|_{E_\theta^1}, \| a_2 \|_{E_\theta^2}).
	$$
\end{prop}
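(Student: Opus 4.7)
The plan is to mimic Bergh's original argument~\cite{Ber:84} for a single Banach couple, replacing the univariate envelope by its bivariate counterpart. The monotonicity of $\mu$ in each slot is precisely what is needed to transport the estimate on the boundary strip through the two-argument nonlinearity $N$, so no new analytic input beyond hypotheses (i) and (ii) is required; the rest is a density-in-infimum argument and continuity of $\mu$.

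First I would fix $\eps > 0$. By the definition of the complex interpolation norms, there exist admissible functions $f_k \in \sF(E_0^k, E_1^k)$, $k = 1,2$, satisfying $f_k(\theta) = a_k$ and
\[
\|f_k\|_{\sF} \le \|a_k\|_{E_\theta^k} + \eps.
\]
Hypothesis (i) then guarantees that $g := N(f_1,f_2) \in \sF(F_0,F_1)$, so it is continuous on the closed strip, analytic in its interior, and $F_j$-valued with vanishing limit at infinity on each boundary line $\re z = j$. For every $j \in \{0,1\}$ and every $t \in \R$, hypothesis (ii) applied at $z = j + \mathrm{i}t$ combined with $\|f_k(j + \mathrm{i}t)\|_{E_j^k} \le \|f_k\|_{\sF}$ and the monotonicity of $\mu$ in both arguments yields
\[
\|g(j + \mathrm{i}t)\|_{F_j} \le \mu\bigl(\|f_1(j + \mathrm{i}t)\|_{E_j^1},\,\|f_2(j + \mathrm{i}t)\|_{E_j^2}\bigr) \le \mu\bigl(\|f_1\|_{\sF},\,\|f_2\|_{\sF}\bigr).
\]
Taking the supremum in $(j,t)$ and using monotonicity once more, I obtain
\[
\|g\|_{\sF} \le \mu\bigl(\|f_1\|_{\sF},\,\|f_2\|_{\sF}\bigr) \le \mu\bigl(\|a_1\|_{E_\theta^1} + \eps,\,\|a_2\|_{E_\theta^2} + \eps\bigr).
\]

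Since $g(\theta) = N(a_1,a_2)$, the definition of $F_\theta$ yields $N(a_1,a_2) \in F_\theta$ together with $\|N(a_1,a_2)\|_{F_\theta} \le \|g\|_{\sF}$. Letting $\eps \to 0^{+}$ and invoking the continuity of $\mu$ on $\R_{\ge 0} \times \R_{\ge 0}$ then gives the desired inequality. The only genuine point of care is that $N(f_1,f_2)$ inherits continuity on the closed strip, analyticity in the interior, and boundary decay; this is exactly what hypothesis (i) abstracts away, so it causes no difficulty here. The seemingly mild monotonicity assumption on $\mu$ is the step on which the whole argument hinges: without it, one cannot pass from the pointwise bound in $F_j$ at each $z = j + \mathrm{i}t$ to a uniform bound in terms of the $\sF$-norms of the lifts $f_1, f_2$, and the entire strategy collapses.
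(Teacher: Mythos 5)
Your proof is correct and is precisely the ``slight modification of the standard arguments'' in Bergh's paper that the authors invoke without writing out: choose near-optimal lifts $f_k$ of $a_k$, apply hypothesis (i) to get an admissible lift of $N(a_1,a_2)$, control its boundary values via hypothesis (ii) and the monotonicity of $\mu$, and let $\eps\to 0^+$ using continuity of $\mu$. Since the paper gives no explicit proof and simply defers to \cite{Ber:84}, your argument matches the intended one.
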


\section{Proof of the main result}
\label{sec:proof main result}

This section is dedicated to showing the main result. Let us first explain the general strategy. 

In order, to apply \autoref{prop:quasilinear} to \eqref{eq:transformed system}, we reformulate \eqref{eq:transformed system} as a non-autonomous quasilinear abstract Cauchy problem in \autoref{ssec:quasilinear aCP}.
Afterwards, we verify the conditions of \autoref{ass:quasilinear thm}.
We start by checking that the transformed operator has the property of maximal $\rL^p$-regularity in \autoref{ssec:A3}, for which we use decoupling techniques to account for the coupling condition on the interface.
Besides, employing the structure and regularity of the coordinate transform as explained in \autoref{rmk:procedure Z and Y from xi and Omega} and \autoref{prop:dep Z and Y on xi and Omega}, we establish Lipschitz estimates for the transformed operator in \autoref{ssec:A1} and for the transformed right-hand side in \autoref{ssec:A2}.
The latter aspect relies on an application of \autoref{prop:berghvariante}.
Finally, the local well-posedness of \eqref{eq:transformed system} follows from \autoref{prop:quasilinear}. 

In order to conclude \autoref{thm:main thm transformed}, recall from \autoref{rmk:procedure Z and Y from xi and Omega} that the matrix $Q$ and the transforms $Z$ and $Y$ can be constructed  from $(\xi,\Omega)$. 
As a consequence of \autoref{thm:main thm transformed}, we obtain our main result \autoref{thm:main theorem} by performing the backward change of variables and coordinates given in \autoref{sec:coordinate transform}.

\subsection{Reformulation of \eqref{eq:transformed system} as a non-autonomous quasilinear abstract Cauchy problem}
\label{ssec:quasilinear aCP}
\

We start by initializing the Banach spaces
\begin{equation}\label{eq:ground space}
    \tsX_0 = \tX_0 \times \R^3 = \rL^q(\cD;\R^2) \times \rL^q(\cD) \times \rL^q(\cD) \times \R^2 \times \R.
\end{equation}

For fixed $t \in \R_+$ and $\tw_0 = (\tu_0,\thseaice_0,\ta_0,\xi_0,\Omega_0)$, let $\tAHm(t,\tw_0)$ be given by
\begin{equation}\label{eq:maximal tranformed sea ice op}
    \tAHm(t,\tw_0) \tu := \frac{1}{\rice \thseaice_0} \tAH(t,\tw_0) \tu, \enspace D(\tAHm(t,\tw_0)) := \{ \tu \in \rW^{2,q}(\cD;\R^2) : \tu = 0 \mbox{ on } \partial \cO \},
\end{equation}
where the transformed Hibler operator $\tAH(t,\tw_0) \tu$ is defined in \eqref{eq:Hibler op transformed}.
Note that its domain is independent of $t$ and $\tw_0$. 
Further, the lower order terms lead to the operator
\begin{equation}\label{eq:transformed lower order terms}
	\tB(t,\tw_0) \binom{\thseaice}{\ta} := \frac{1}{\rice \thseaice_0}\tB_1(t,\tw_0)\thseaice + \frac{1}{\rice \thseaice_0}\tB_2(t,\tw_0) \ta,
\end{equation}
where $\tB_1(t,\tw_0)$ and $\tB_2(t,\tw_0)$ are given in \eqref{eq:B_1,2}. Finally, the transformed Laplacians $\tD(t,\tw_0)$ are given by
\begin{equation}\label{eq:transformed laplacians}
	\tD(t,\tw_0) \binom{\thseaice}{\ta} := \binom{d_h \tL(t,\xi_0,\Omega_0) \thseaice}{d_h \tL(t,\xi_0,\Omega_0) \ta}, \enspace
	D(\tD(t,\tw_0)) := \left\{ (\thseaice,\ta) \in \rW^{2,q}(\cD)^2 : \partial_\nu \thseaice = \partial_\nu \ta = 0 \mbox{ on } \partial \cD \right\} ,
\end{equation}
where $\tL(t,\xi_0,\Omega_0)$ is defined as in \eqref{eq:transformed Laplacian h,a}. 
Moreover, we denote by $\tr_\Gamma \colon \rW^{2,q}(\cD;\R^2) \to \rW^{2-\nicefrac{1}{q},q}(\Gamma;\R^2)$ the trace operator on $\Gamma$, and we define the coupling operator
\begin{equation}\label{eq:coupling operator}
	R \colon \R^3 \to \rW^{2-\nicefrac{1}{q}}(\Gamma,\R^2), \enspace \binom{\xi}{\Omega}
	\mapsto (\Omega y^{\perp} + \xi) \mathds{1}_{\Gamma},
\end{equation}
where $\mathds{1}_\Gamma$ denotes the constant $1$-function on $\Gamma$.

For $\tw_0 = (\tu_0,\thseaice_0,\ta_0,\xi_0,\Omega_0)$ we consider $\tAHm(t,\tw_0)$ as in \eqref{eq:maximal tranformed sea ice op}, $\tB(t,\tw_0)$ as in \eqref{eq:transformed lower order terms}, $\tD(t,\tw_0)$ as in \eqref{eq:transformed laplacians} and $R$ as in \eqref{eq:coupling operator}, and we introduce the family of operator matrices $\tsA(t,\tw_0)$ given by
\begin{equation}
	\tsA(t,\tw_0) 
	:= 
	\begin{pmatrix}
		\tAHm(t,\tw_0) & -\tB(t,\tw_0) & 0 \\
		0 & -\tD(t,\tw_0) & 0 \\
		0 & 0 & 0 
	\end{pmatrix}
\label{eq:tsA}
\end{equation}
with domain
\begin{equation}
	\tsX_1 := D(\tsA(t,\tw_0)) 
	:= \left\{
	\tw = (\tu,\thseaice,\ta,\xi,\Omega) \in D(\tAHm(t,\tw_0)) \times D(\tD(t,\xi_0,\Omega_0)) : \tr_\Gamma \tu = R \binom{\xi}{\Omega}
	\right\} .
	\label{eq:domain tsA}
\end{equation}
Again, we observe that the domain is independent of $t$ and $\tw_0$.
We will see in \autoref{ssec:A3} that $\tsA(0,\tw_0)$ is a closed operator and hence $\tsX_1 = D(\tsA(t,\tw_0))$ is a Banach space with the graph norm. 
Furthermore, we denote by $\tsX_\gamma = (\tsX_0,\tsX_1)_{1-\nicefrac{1}{p},p}$ the trace space, which we will determine in terms of Besov spaces in \autoref{lem:trace space}.  

Next, we summarize the transformed nonlinear terms coming from the sea ice equations in \eqref{eq:transformed system} in
\begin{equation}\label{eq:G1 transformed}
	\tsG_1(t,\tw)
	= \begin{pmatrix}
		\frac{1}{\rice \thseaice} f_1(\tu,\thseaice) - \sum_{i,j=1}^2 \tu_i (\partial_i Y_j) \partial_j \tu\\
		S_h(\thseaice,\ta) - \sum_{i,j=1}^2 (\partial_i Y_j) \bigl(\tu_i \partial_j \thseaice + \thseaice \partial_j \tu_i\bigr)\\
		S_a(\thseaice,\ta) - \sum_{i,j=1}^2 (\partial_i Y_j) \bigl(\tu_i \partial_j \ta + \ta \partial_j \tu_i\bigr)
	\end{pmatrix},
\end{equation}
with $f_1$ is as in \eqref{eq:rhsseaice}, $S_h$ as in \eqref{eq:S_h} and $S_a$ as in \eqref{eq:S_a}, where \autoref{rmk:procedure Z and Y from xi and Omega} guarantees that the right-hand side is a function depending only on $(t,\tw)$.
Similarly, we introduce $\tsG_2$ for the transformed nonlinear terms in the equations for the motion of the rigid body, i.e., 
\begin{equation}\label{eq:G2 transformed}
	\tsG_2(t,\tw)
	= \begin{pmatrix}
		\frac{1}{\mbody} \tF - \frac{1}{\mbody} \int_{\Gamma} \sT_\delta(\tu,\thseaice,\ta) \tn \,\mathrm{d} S\\
		I^{-1} \tN - I^{-1} \int_{\Gamma} y^\perp \sT_\delta(\tu,\thseaice,\ta) \tn \,\mathrm{d} S
	\end{pmatrix}
\end{equation}
for $\tw = (\tu,\thseaice,\ta,\xi,\Omega)$. 
We then set $\tsG(t,\tw) := \binom{\tsG_1(t,\tw)}{\tsG_2(t,\tw)}$.
Taking into account \autoref{rmk:procedure Z and Y from xi and Omega}, we introduce the notation
\begin{equation}\label{eq:sM}
	\tsM_1(t,\xi,\Omega) \tv = \begin{pmatrix}
		\sum_{j=1}^2 \dot{Y_j} \partial_j \tu\\ \sum_{j=1}^2 \dot{Y_j} \partial_j \thseaice\\ \sum_{j=1}^2 \dot{Y_j} \partial_j \ta   
	\end{pmatrix}
\end{equation}
for $\tv= (\tu,\thseaice,\ta)$ and $\tsM(t,\xi,\Omega) := \binom{\tsM_1(t,\xi,\Omega)}{0}$.
Finally, we define
\begin{equation}
	\tsF(t,\tw) := \tsG(t,\tw) - \tsM(t,\xi,\Omega) \tw + (0,0,0,\Omega \xi^\perp,0)^\T.
	\label{eq:rhs operator} 
\end{equation}
Therefore, \eqref{eq:transformed system} can be written as a non-autonomous quasilinear abstract Cauchy problem
\begin{equation}
	\left\{
	\begin{aligned}
		\tw_t + \tsA(t,\tw) \tw &= \tsF(t,\tw), \quad t \in [0,T], \\
		\tw(0) &= \tw_0,
	\end{aligned}
	\right.
	\label{eq:ACP}
\end{equation}
with initial data $\tw_0 = (\tu_0,\thseaice_0,\ta_0,\xi_0,\Omega_0) \in V$, where $V$ is as made precise in \eqref{eq:V}.
In the next subsections, we verify that \eqref{eq:ACP} is in the framework of \autoref{subsec:quasilin result}, i.e., we check that $\tsA$ as in \eqref{eq:tsA} and $\tsF$ as in \eqref{eq:rhs operator} satisfy the conditions (A1), (A2) and (A3) from \autoref{ass:quasilinear thm}.

\subsection{Maximal regularity of the linearized operator matrix}\label{ssec:A3}

\

In this subsection, we give an abstract tool to verify the assumption that for $w_0 \in V$ fixed, $\tsA(0,w_0)$ admits maximal $\rL^p$-regularity on the ground space $\tsX_0$ as made precise in \eqref{eq:ground space}.
Throughout this subsection, we employ the notation $b = (h,a)$, $z = (\eta,\omega)$.

Recalling $\AH(w_0)$ from \eqref{eq:Hibler op} as well as $\tAHm(0,w_0)$ and $D(\AHm) = D(\tAHm(0,w_0))$ from \eqref{eq:maximal tranformed sea ice op}, we introduce
\begin{equation*}\label{eq:operators for fixed time}
    \left\{
    \begin{aligned}
        \AHm(w_0) u &:= \tAHm(0,w_0)u = \frac{1}{\rice h_0}\AH(w_0)u, \mbox{ for } u \in D(\AHm) := D(\tAHm(0,w_0)), \\
        \rD b &:= \diag(d_h \Delta, d_a \Delta)b, \mbox{ for } b \in D(\rD) := \{b \in \rW^{2,q}(\cD)^2 : \partial_\nu h = \partial_\nu a = 0 \mbox{ on } \partial \cD\},\\
        \rB(w_0)b &:= \frac{\partial_h P(h_0,a_0)}{2 \rice h_0} \nabla h +  \frac{\partial_a P(h_0,a_0)}{2 \rice h_0} \nabla a, \mbox{ for } b \in D(\rD).
    \end{aligned}
    \right.
\end{equation*}
Here we used that the transformation $Y$ satisfies $Y(0) = \Id$ by definition, and therefore, the transformed variables and the usual variables coincide. 
We then get
\begin{equation*}
    \sA := \tsA(0,w_0) = 
	\begin{pmatrix}
		\tAHm(0,w_0) & -\tB(0,w_0) & 0 \\
		0 & -\tD(0,w_0) & 0 \\
		0 & 0 & 0 
	\end{pmatrix} = 	\begin{pmatrix}
		\AHm(w_0) & -\rB(w_0) & 0 \\
		0 & -\rD & 0 \\
		0 & 0 & 0 
	\end{pmatrix}
\end{equation*}
with coupled domain $\tsX_1$ as in \eqref{eq:domain tsA}.
To simplify the notation, we omit the entry $w_0$, since it is fixed. 
For the trace $\tr_\Gamma$ and the coupling operator $R$ as in \eqref{eq:coupling operator}, the coupled domain then rewrites as
\begin{equation*}
    \tsX_1 = D(\sA) = \{(u,b,z) \in D(\AHm) \times D(\rD) \times \R^3 : \tr_\Gamma u = Rz\}.
\end{equation*}
Next, we introduce the \emph{Hibler operator with homogeneous Dirichlet boundary conditions} $\AHD$ on $\rL^q(\cD;\R^2)$ as investigated in \cite[Sections~4 and 6]{BDHH:22}.
In the present framework, it is given by
\begin{equation*}
	\AHD u := \AHm u, \qquad D(\AHD) := \ker(\tr_\Gamma) = \rW^{2,q}(\cD;\R^2) \cap \rW_0^{1,q}(\cD;\R^2).
\end{equation*} 
Now, the \emph{decoupled operator matrix}
$\sA_0 \colon D(\sA_0) \subset \tsX_0 \to \tsX_0$ is given by
\begin{equation}\label{eq:decoupled operator matrix}
	\sA_0 = 
	\begin{pmatrix}
		\AHD & -\rB & 0 \\
		0 & -\rD & 0 \\
		0 & 0 & 0
	\end{pmatrix},
	\qquad D(\sA_0) = D(\AHD) \times D(\rD) \times \R^3,
\end{equation}
and we equip $D(\sA_0)$ with the graph norm.
By \cite[Theorem~4.4 and Lemma~6.1]{BDHH:22}, there is $\lambda_0 \in \R$ such that for all $\lambda > \lambda_0$ it holds that $\AHD + \lambda$ has the property of maximal $\rL^p$-regularity, and it follows that $\AHD + \lambda$ then is also invertible.
Therefore, for such $\lambda$, we introduce the translated versions of $\sA$ and $\sA_0$ as $\sA_\lambda := \sA + \diag(\lambda,0,0)$ and $\sA_{0,\lambda} := \sA_0 + \diag(\lambda,0,0)$
which does not affect the domains of the operators, i.e., $D(\sA_\lambda) = D(\sA) = \tsX_1$ and $D(\sA_{0,\lambda}) = D(\sA_0)$.

The aim now is to show that $\sA_\lambda$ admits maximal $\rL^p$-regularity on $\tsX_0$ by invoking the maximal regularity of $\AHD + \lambda$ and exploiting the upper triangular structure of the operator matrix.
To this end, we use a method of decoupling, and we first argue that 
\begin{equation}\label{eq:def L0}
    L_0 = \left(\tr_\Gamma\big|_{\ker (\AHm + \lambda)}\right)^{-1}
\end{equation}
is well-defined and continuous:

\begin{lem}\label{lem:L_0 existence and continuity}
	The operator $\tr_\Gamma\big|_{\ker (\AHm + \lambda)} \in \sL(\rW^{2,q}(\cD;\R^2),\rW^{2-\nicefrac{1}{q},q}(\Gamma;\R^2))$ is continuously invertible, and $L_0$ defined in \eqref{eq:def L0} is thus bounded from $\rW^{2-\nicefrac{1}{q},q}(\Gamma;\R^2)$ to $\rW^{2,q}(\cD;\R^2)$.
\end{lem}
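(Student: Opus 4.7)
The plan is to establish that $\tr_\Gamma|_{\ker(\AHm + \lambda)}$ is a continuous bijection onto $\rW^{2-\nicefrac{1}{q},q}(\Gamma;\R^2)$ and then conclude continuous invertibility either via an explicit construction or via the open mapping theorem. Continuity of the restricted trace itself is immediate from the bounded trace theorem, once one checks that $\ker(\AHm+\lambda)$ is a closed subspace of $\rW^{2,q}(\cD;\R^2)$; this closedness follows from the fact that $\AHm+\lambda$ acts continuously from $\rW^{2,q}(\cD;\R^2)$ into $\rL^q(\cD;\R^2)$ (its coefficients are at worst $\rC^1$-regular in the initial configuration) together with continuity of the zero-trace on $\partial \cO$ encoded in $D(\AHm)$.

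For injectivity I would argue directly: if $u \in \ker(\AHm+\lambda)$ satisfies $\tr_\Gamma u = 0$, then by definition of $D(\AHm)$ the function $u$ also vanishes on $\partial \cO$, hence $u \in D(\AHD)$ and $(\AHD+\lambda)u = 0$. Since \cite[Theorem~4.4 and Lemma~6.1]{BDHH:22} yield maximal $\rL^p$-regularity of $\AHD+\lambda$ for $\lambda>\lambda_0$ and in particular invertibility on $\rL^q(\cD;\R^2)$, this forces $u=0$.

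For surjectivity I would use a lifting-and-correction argument. Given $g \in \rW^{2-\nicefrac{1}{q},q}(\Gamma;\R^2)$, use the fact that $\Gamma$ and $\partial \cO$ are disjoint closed components of $\partial \cD$ (recall $\dist(\cB,\partial \cO) > d > 0$, so $\Gamma$ lies strictly inside $\cO$) to construct, via a cutoff supported near $\Gamma$ together with the standard surjectivity of the trace on a collar neighbourhood, an extension $v \in \rW^{2,q}(\cD;\R^2)$ with $\tr_\Gamma v = g$, $v = 0$ on $\partial \cO$, and $\|v\|_{\rW^{2,q}} \lesssim \|g\|_{\rW^{2-\nicefrac{1}{q},q}(\Gamma)}$. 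Then $(\AHm+\lambda)v \in \rL^q(\cD;\R^2)$, and one solves $(\AHD+\lambda) w = -(\AHm+\lambda) v$, which is possible by the invertibility noted above, obtaining $w \in D(\AHD)$ with $\|w\|_{\rW^{2,q}} \lesssim \|v\|_{\rW^{2,q}}$. Setting $u := v + w$ gives an element of $\ker(\AHm+\lambda)$ with $\tr_\Gamma u = g$ and $u = 0$ on $\partial \cO$. The bound $\|u\|_{\rW^{2,q}} \lesssim \|g\|_{\rW^{2-\nicefrac{1}{q},q}(\Gamma)}$ obtained along the way produces the desired continuous inverse $L_0$ directly; alternatively, the open mapping theorem applied to the continuous bijection $\tr_\Gamma|_{\ker(\AHm+\lambda)}$ between Banach spaces yields continuity of the inverse with no quantitative estimate needed.

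The main subtlety I anticipate is the construction of the extension $v$ that simultaneously carries the datum $g$ on $\Gamma$ and satisfies the homogeneous Dirichlet condition on $\partial \cO$; here the positive separation of $\Gamma$ and $\partial \cO$ is essential, since it permits a cutoff $\psi \in \rC^\infty_c(\R^2)$ equal to one in a neighbourhood of $\Gamma$ and vanishing near $\partial \cO$, reducing the problem to the standard surjectivity of $\tr_\Gamma \colon \rW^{2,q} \to \rW^{2-\nicefrac{1}{q},q}(\Gamma)$ on the collar of $\Gamma$. Everything else amounts to continuous-dependence bookkeeping using the invertibility of $\AHD + \lambda$ recalled from \cite{BDHH:22}.
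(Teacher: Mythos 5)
Your argument is correct, but it takes a more hands-on route than the paper. The paper's own proof verifies three facts --- that $\tr_\Gamma$ is a retraction from $\rW^{2,q}(\cD;\R^2)$ onto $\rW^{2-\nicefrac{1}{q},q}(\Gamma;\R^2)$ (via Triebel), that the pair $\binom{\AHm}{\tr_\Gamma}$ is closed, and that $\AHD+\lambda$ is invertible for $\lambda>\lambda_0$ --- and then invokes the abstract Dirichlet-operator lemma \cite[Lemma~2.2]{CENN:03} from the theory of boundary feedback systems, which delivers the bounded inverse $L_0$ in one stroke. What you have done is essentially to unwind the proof of that abstract lemma in the concrete setting: your injectivity step (an element of $\ker(\AHm+\lambda)$ with vanishing trace on $\Gamma$ lies in $D(\AHD)$ and is killed by the invertibility of $\AHD+\lambda$) and your surjectivity step (lift $g$ to $v$ and correct by $w=-(\AHD+\lambda)^{-1}(\AHm+\lambda)v$) together realize the decomposition $D(\AHm)=D(\AHD)\oplus\ker(\AHm+\lambda)$ on which the cited lemma rests. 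Both proofs therefore use exactly the same two ingredients, namely surjectivity of the trace and invertibility of $\AHD+\lambda$; your version is self-contained and makes explicit where the separation $\dist(\cB,\partial\cO)>d$ enters the lifting, while the paper's version is shorter and stays inside the \cite{CENN:03} decoupling framework that it reuses immediately afterwards for the operator $\sS$. One minor remark: the cutoff construction of $v$ is fine but not strictly needed, since $\Gamma$ and $\partial\cO$ are distinct components of $\partial\cD$ and one may prescribe the Dirichlet trace to be $g$ on $\Gamma$ and $0$ on $\partial\cO$ directly; either way the quantitative bound $\|u\|_{\rW^{2,q}}\lesssim\|g\|_{\rW^{2-\nicefrac{1}{q},q}(\Gamma;\R^2)}$ follows, and the open mapping theorem would also suffice once closedness of $\ker(\AHm+\lambda)$ is noted, as you do.
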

\begin{proof} 
We make use of  \cite[Theorem~4.7.1]{Tri:78} to deduce that $\tr_\Gamma$ is a retraction from $\rW^{2,q}(\cD;\R^2)$ onto $\rW^{2-\nicefrac{1}{q},q}(\Gamma;\R^2)$, so it is in particular surjective and continuous. 
The closed graph theorem, or, equivalently, the bounded inverse theorem, yields that the graph norm of $\AHm$ is equivalent to the $\rW^{2,q}$-norm, and closedness of $\AHm $ follows by the fact that it is an elliptic differential operator of second order.
In particular, we have argued that ${\tr_\Gamma \in \sL(\rW^{2,q}(\cD;\R^2),\rW^{2 - \nicefrac{1}{q}}(\Gamma;\R^2))}$.
The latter observation and closedness of $\AHm$ imply that
\begin{equation*}
    \binom{\AHm}{\tr_\Gamma} \colon D(\AHm) \to \rL^q(\cD;\R^2) \times \rW^{2-\nicefrac{1}{q},q}(\Gamma;\R^2)
\end{equation*}
is closed.
The invertibility of $\AHD + \lambda$ yields the existence and continuity of the above $L_0$ in view of \cite[Lemma 2.2]{CENN:03}.
Note that $\tr_\Gamma$ plays the role of $L$ in the abstract framework of \cite[Section~2]{CENN:03}.
\end{proof} 

The shape of $R$ reveals that it is especially bounded and $\im(R) \subset \rW^{2-\nicefrac{1}{q},q}(\Gamma;\R^2)$, so it follows that $L_0 R$ is bounded as the product of two bounded operators.
Consequently, the operator
\begin{equation}\label{eq:decoupling matrix S}
	\sS = \begin{pmatrix}
		\Id & 0 & - L_0 R\\
		0 & \Id & 0\\
		0 & 0 & \Id
	\end{pmatrix} \qquad \mbox{ is bounded with inverse } \qquad \sS^{-1} = \begin{pmatrix}
		\Id & 0 & L_0 R\\
		0 & \Id & 0\\
		0 & 0 & \Id
	\end{pmatrix}.    
\end{equation}
The following result establishes maximal regularity in the coupled setting.

\begin{prop}\label{prop: max. Reg.}
	Let $p,q \in (1,\infty)$ be such that \eqref{eq:condition p and q} holds true.
	Then for $w_0 \in V$, given in \eqref{eq:V}, and $\lambda > \lambda_0$, the operator matrix $\sA_\lambda$ admits maximal $\rL^p$-regularity on $\tsX_0$.
\end{prop}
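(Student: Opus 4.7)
The plan is to convert $\sA_\lambda$ into the decoupled, upper triangular operator $\sA_{0,\lambda}$ from \eqref{eq:decoupled operator matrix} via a similarity with the bounded isomorphism $\sS$ of $\tsX_0$ introduced in \eqref{eq:decoupling matrix S}, and then read off maximal $\rL^p$-regularity from the triangular structure. Maximal regularity is preserved under such similarities: the substitution $v = \sS w$ converts $w' + \sA_\lambda w = f$ into $v' + (\sS \sA_\lambda \sS^{-1}) v = \sS f$, and since $\sS, \sS^{-1} \in \sL(\tsX_0)$, the two $\rL^p$-regularity statements are equivalent. The heart of the argument is therefore the similarity identity $\sS \sA_\lambda \sS^{-1} = \sA_{0,\lambda}$; once this is in place the remainder is classical.

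To verify the similarity, I would first observe that $\sS^{-1}$ sends $D(\sA_{0,\lambda}) = D(\AHD) \times D(\rD) \times \R^3$ bijectively onto $D(\sA_\lambda) = \tsX_1$: given $(u,b,z) \in D(\sA_{0,\lambda})$, one has $\tr_\Gamma u = 0$, while by \autoref{lem:L_0 existence and continuity} the lift $L_0 Rz$ satisfies $\tr_\Gamma L_0 Rz = Rz$, so $\tr_\Gamma(u + L_0 Rz) = Rz$ as required by \eqref{eq:domain tsA}. The defining property $L_0 Rz \in \ker(\AHm + \lambda)$ from \eqref{eq:def L0} then gives
\begin{equation*}
\sA_\lambda \sS^{-1}(u,b,z) = \bigl((\AHm + \lambda)(u + L_0 Rz) - \rB b,\; -\rD b,\; 0\bigr) = \bigl((\AHD + \lambda)u - \rB b,\; -\rD b,\; 0\bigr),
\end{equation*}
since $(\AHm + \lambda)L_0 Rz = 0$ and $\AHm$ agrees with $\AHD$ on $D(\AHD)$. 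Applying $\sS$ afterwards has no effect, because the third component vanishes, and the right-hand side is exactly $\sA_{0,\lambda}(u,b,z)$.

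It remains to establish maximal $\rL^p$-regularity of $\sA_{0,\lambda}$ on $\tsX_0$. The matrix is block upper triangular with diagonal entries $\AHD + \lambda$, $-\rD$ and the zero operator on $\R^3$. The first has maximal $\rL^p$-regularity on $\rL^q(\cD;\R^2)$ by \cite[Theorem~4.4 and Lemma~6.1]{BDHH:22}, the second is the (diagonal) negative Neumann Laplacian and has maximal regularity on $\rL^q(\cD)^2$ by classical theory, and the third is trivial on the finite-dimensional space $\R^3$. The off-diagonal block $-\rB$ is of first order, hence bounded from $D(\rD)$ into $\rL^q(\cD;\R^2)$ with arbitrarily small relative bound with respect to $\rD$, and the remaining off-diagonal blocks are zero. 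A standard upper-triangular perturbation argument for operator matrices then yields maximal $\rL^p$-regularity of $\sA_{0,\lambda}$, which via the similarity identity transfers to $\sA_\lambda$ and concludes the proof. The one genuinely delicate point is the existence and continuity of $L_0$ used in the similarity step, but this is precisely what \autoref{lem:L_0 existence and continuity} provides, via \cite[Lemma~2.2]{CENN:03} and the invertibility of $\AHD + \lambda$ for $\lambda > \lambda_0$.
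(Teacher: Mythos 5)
Your proposal is correct and follows essentially the same route as the paper: conjugation of $\sA_\lambda$ by the bounded isomorphism $\sS$ built from $L_0 R$ to obtain the decoupled upper triangular matrix $\sA_{0,\lambda}$, whose maximal $\rL^p$-regularity follows from \cite[Theorem~4.4 and Lemma~6.1]{BDHH:22} for $\AHD+\lambda$, the Neumann Laplacian, and the triangular structure. Your additional remarks on the relative boundedness of $-\rB$ and on the invariance of maximal regularity under similarity are consistent with (and slightly more explicit than) the paper's argument.
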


\begin{proof}
Using $\im(L_0) \subset D(\AHm)$, we derive that
\begin{equation*}
    \begin{aligned}
        \sS D(\sA)
	    &= \left \lbrace \begin{pmatrix}
		u - L_0 R z \\ b \\ z
	    \end{pmatrix} : u \in D(\AHm), \enspace b \in D(\rD), \enspace z \in \R^3, \enspace \tr_\Gamma u = R z \right \rbrace\\
	    &= \left \lbrace \begin{pmatrix}
		\hat{u} \\ b \\ z
	    \end{pmatrix} : \hat{u} \in D(\AHm), \enspace b \in D(\rD), \enspace z \in \R^3, \enspace \tr_\Gamma \hat{u} = 0 \right \rbrace\\
	    &= D(\AHD) \times D(\rD) \times \R^3 = D(\sA_{0,\lambda}).
    \end{aligned}
\end{equation*}
On the other hand, employing that $\im(L_0) \subset \ker(\AHm + \lambda)$, we deduce that
\begin{equation*}
	\sS \sA_\lambda \sS^{-1} = \begin{pmatrix}
		\AHm + \lambda & -\rB & (\AHm + \lambda) L_0 R\\
		0 & -\rD & 0\\
		0 & 0 & 0
	\end{pmatrix} = 
	\begin{pmatrix}
		\AHm + \lambda & -\rB & 0\\
		0 & -\rD & 0\\
		0 & 0 & 0
	\end{pmatrix} = \sA_{0,\lambda},   
\end{equation*}
where we used that $\sS D(\sA)$ is precisely the domain of $\sA_{0,\lambda}$.
The upper triangular structure of $\sA_{0,\lambda}$, the well known fact that the Neumann Laplacian operator on $\rL^q(\cD)$ has the property of maximal $\rL^p$-regularity and the aforementioned result from \cite{BDHH:22} imply that $\sA_{0,\lambda}$ admits maximal $\rL^p$-regularity.
The claim follows now from $\sA_\lambda = \sS^{-1} \sA_{0,\lambda} \sS$. 
\end{proof}

We next verify that the trace space in the coupled setting can be represented in terms of the matrix $\sS^{-1}$ and the trace space $\tsY_\gamma$ in the decoupled setting, and we characterize the coupled trace space in terms of Besov spaces.
The ground space in the decoupled setting is also given by $\tsX_0$, while $D(\sA_0)$ as in \eqref{eq:decoupled operator matrix} represents the regularity space.

\begin{lem}\label{lem:trace space}
	For $\sS$ and $\sS^{-1}$ as in \eqref{eq:decoupling matrix S}, $\tsX_0$ as in \eqref{eq:ground space}, $\tsX_1$ as in \eqref{eq:domain tsA} and $D(\sA_0)$ as in \eqref{eq:decoupled operator matrix}, $\tsX_\gamma = (\tsX_0,\tsX_1)_{1-\nicefrac{1}{p},p}$ and $\tsY_\gamma = (\tsX_0,D(\sA_0))_{1-\nicefrac{1}{p},p}$, it holds that
	\begin{equation*}
	    \tsX_\gamma = \sS^{-1}(\tsY_\gamma) = \left\{(u,b,z) \in \rB_{qp}^{2 - \nicefrac{2}{p}}(\cD;\R^2) \times \rB_{qp}^{2 - \nicefrac{2}{p}}(\cD)^2 \times \R^3 : u = \eta + \omega (x - x_c)^\perp\right\}.
	\end{equation*}
\end{lem}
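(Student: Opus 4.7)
The plan is to prove the characterization in three steps, leveraging the decoupling map $\sS$ together with standard real interpolation identities for elliptic operators with Dirichlet and Neumann boundary conditions on the fluid domain $\cD$.

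\textbf{Step 1: Isomorphism property of $\sS$ between interpolation couples.} First I would observe that the operator $\sS$ in \eqref{eq:decoupling matrix S} and its inverse $\sS^{-1}$ are bounded linear maps on $\tsX_0$, since the off-diagonal entry $L_0 R \colon \R^3 \to \rW^{2,q}(\cD;\R^2) \hookrightarrow \rL^q(\cD;\R^2)$ is bounded by virtue of \autoref{lem:L_0 existence and continuity} and the boundedness of $R$ from \eqref{eq:coupling operator}. As shown within the proof of \autoref{prop: max. Reg.}, $\sS$ maps $\tsX_1$ bijectively onto $D(\sA_0)$, with bounded inverse $\sS^{-1}$. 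Hence $\sS$ is an isomorphism of Banach couples $(\tsX_0,\tsX_1) \to (\tsX_0, D(\sA_0))$. The real interpolation functor then yields that $\sS \colon \tsX_\gamma \to \tsY_\gamma$ is a topological isomorphism, which immediately gives $\tsX_\gamma = \sS^{-1}(\tsY_\gamma)$.

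\textbf{Step 2: Characterization of $\tsY_\gamma$ by Besov spaces.} Since $D(\sA_0)$ has the block-diagonal form $D(\AHD) \times D(\rD) \times \R^3$ and real interpolation commutes with finite direct sums, it suffices to identify each factor. Recalling $D(\AHD) = \rW^{2,q}(\cD;\R^2) \cap \rW^{1,q}_0(\cD;\R^2)$ from \cite{BDHH:22} and invoking the classical real interpolation results for elliptic operators with Dirichlet boundary conditions (as in \cite{Ama:93, Tri:78}), together with the embedding \eqref{eq:emb Besov C1} and condition \eqref{eq:condition p and q} which ensures that Dirichlet traces exist on $\rB^{2-\nicefrac{2}{p}}_{qp}(\cD;\R^2)$, I obtain
\begin{equation*}
(\rL^q(\cD;\R^2), D(\AHD))_{1-\nicefrac{1}{p},p} = \bigl\{ u \in \rB^{2-\nicefrac{2}{p}}_{qp}(\cD;\R^2) : u = 0 \text{ on } \partial\cD \bigr\}.
\end{equation*}
Analogously, the condition \eqref{eq:condition p and q} gives $2 - \nicefrac{2}{p} > 1 + \nicefrac{1}{q}$, so Neumann traces are well-defined on $\rB^{2-\nicefrac{2}{p}}_{qp}(\cD)$, and the interpolation identity for the Neumann Laplacian yields
\begin{equation*}
(\rL^q(\cD)^2, D(\rD))_{1-\nicefrac{1}{p},p} = \bigl\{ (h,a) \in \rB^{2-\nicefrac{2}{p}}_{qp}(\cD)^2 : \partial_\nu h = \partial_\nu a = 0 \text{ on } \partial\cD \bigr\}.
\end{equation*}

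\textbf{Step 3: Back-transform and identification of $\tsX_\gamma$.} An arbitrary element of $\tsX_\gamma$ is of the form $(\tu, b, z) = \sS^{-1}(u,b,z) = (u + L_0 R z, \, b, \, z)$ for some $(u,b,z) \in \tsY_\gamma$. Since $L_0 R z \in D(\AHm) \subset \rW^{2,q}(\cD;\R^2) \hookrightarrow \rB^{2-\nicefrac{2}{p}}_{qp}(\cD;\R^2)$ with $L_0 R z = 0$ on $\partial \cO$ and $\tr_\Gamma(L_0 R z) = Rz = \eta + \omega y^\perp$, the Besov regularity is preserved, the Dirichlet condition $u = 0$ on $\partial\cO$ becomes $\tu = 0$ on $\partial\cO$, and the condition $u = 0$ on $\Gamma$ transforms to $\tr_\Gamma \tu = \eta + \omega y^\perp$ (which is the stated coupling, since $x_c(0)=0$). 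The components $b$ and $z$ are unchanged, so the Neumann conditions on $\partial\cD$ survive. Combining Steps 1--3 yields the asserted description of $\tsX_\gamma$.

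The hard part is Step 2: the Dirichlet factor requires knowing that the Hibler operator $\AHD$ with variable coefficients depending on $w_0$ has the same interpolation behavior as the Dirichlet Laplacian of the same order, which I would obtain by combining the maximal regularity and sectoriality results of \cite[Sections~4, 6]{BDHH:22} with the standard identification of $(\rL^q, D(L))_{1-\nicefrac{1}{p},p}$ for sectorial operators $L$ via bounded imaginary powers or the $\Hinfty$-calculus. The remaining steps are purely a bookkeeping matter, where one must only verify that the trace and normal-derivative conditions are preserved under the affine shift by the bounded operator $L_0 R$.
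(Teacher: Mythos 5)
Your proposal is correct and follows the same overall strategy as the paper: transfer the trace space from the decoupled operator $\sA_0$ to the coupled operator $\sA$ via the similarity $\sS$. The one place where you genuinely diverge is in how this transfer is justified. The paper computes the trace seminorm directly, using that $\sA$ and $\sA_0$ generate analytic semigroups with $T_{\sA} = \sS^{-1}T_{\sA_0}\sS$ and substituting $x = \sS^{-1}y$ into $\bigl(\int_0^\infty \| t^{\nicefrac{1}{p}}\sA T_{\sA}x\|_{\tsX_0}^p \,\mathrm{d}t/t\bigr)^{\nicefrac{1}{p}}$; you instead observe that $\sS$ is an isomorphism of the Banach couple $(\tsX_0,\tsX_1)\to(\tsX_0,D(\sA_0))$ and apply the real interpolation functor. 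Your route is arguably cleaner, since it needs only boundedness of $\sS$ on both levels of the couple (which the proof of \autoref{prop: max. Reg.} supplies) and no semigroup theory. You are also more explicit than the paper in Steps~2 and 3, spelling out the boundary conditions surviving in $\tsY_\gamma$ and how the affine shift $u\mapsto u+L_0Rz$ converts the homogeneous Dirichlet condition on $\Gamma$ into the coupling condition $\tr_\Gamma\tu = Rz$; the paper compresses this into the opening identity $(\rL^q(\cD),\rW^{2,q}(\cD))_{1-\nicefrac{1}{p},p}=\rB_{qp}^{2-\nicefrac{2}{p}}(\cD)$ plus ``recalling the shape of $\sS^{-1}$''. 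One simplification you could make: the concern in your closing paragraph about needing sectoriality, $\BIP$ or the $\Hinfty$-calculus of $\AHD$ is unnecessary, because the real interpolation space $(\rL^q(\cD;\R^2),D(\AHD))_{1-\nicefrac{1}{p},p}$ depends only on $D(\AHD)$ as a Banach space, and the graph norm of $\AHD$ is equivalent to the $\rW^{2,q}$-norm by closedness and the bounded inverse theorem (as in the proof of \autoref{lem:L_0 existence and continuity}); the classical Grisvard-type identification of interpolation spaces with boundary conditions then applies verbatim, with no operator-theoretic input beyond closedness.
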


\begin{proof}
    It is well known that $(\rL^q(\cD),\rW^{2,q}(\cD))_{1-\nicefrac{1}{p},p} = \rB_{qp}^{2 - \nicefrac{2}{p}}(\cD)$, see e.g.\ \cite{Tri:78} or \cite[Chapter~5]{Ama:93}.
    
	We recall from the above arguments that $\sA_0$ and $\sA$ generate analytic semigroups $T_{\sA_0}$ and $T_{\sA}$ on $\tsX_0$.
	Moreover, $\sA = \sS^{-1} \sA_0 \sS$, and it also follows that $T_{\sA} = \sS^{-1} T_{\sA_0} \sS$.
	By definition of the trace space, an insertion of the relation of the operators and semigroups, and using $x = \sS^{-1} y$ for some $y \in \tsX_0$ in conjunction with easy functional analytic arguments involving the boundedness of $\sS$, we have
	\begin{equation*}
	    \begin{aligned}
		\tsX_\gamma 
		&= \left\{x \in \tsX_0 : [x]_{1-\nicefrac{1}{p},p} := \left(\int_0^\infty \| t^{\nicefrac{1}{p}} \sA T_{\sA} x \|_{\tsX_0} ^p \,\mathrm{d}t/t \right)^{\nicefrac{1}{p}} < \infty\right\}\\
		&= \left\{\sS^{-1} y \in \tsX_0 : [\sS^{-1} y]_{1-\nicefrac{1}{p},p} := \left(\int_0^\infty \| t^{\nicefrac{1}{p}} \sS^{-1} \sA_0 T_{\sA_0} y \|_{\tsX_0} ^p \,\mathrm{d}t/t \right)^{\nicefrac{1}{p}} < \infty\right\}\\
		&= \sS^{-1}(\tsY_\gamma),	        
	    \end{aligned}
	\end{equation*}
	so the proof is completed by recalling the shape of $\sS^{-1}$.
\end{proof}

\subsection{Lipschitz estimates of the operator matrix}\label{ssec:A1}

\

We start verifying the aspect (A1) of \autoref{ass:quasilinear thm}.

\begin{lem}\label{lem:continuity in A1}
    Let $p,q \in (1,\infty)$ satisfy \eqref{eq:condition p and q}, consider $V$ as in \eqref{eq:V} and recall $\tsA$ from \eqref{eq:tsA}, $\tsX_0$ from \eqref{eq:ground space} as well as $\tsX_1$ from \eqref{eq:domain tsA}.
    Then the map
    \begin{equation*}
        [0,T] \times V \to \sL(\tsX_1,\tsX_0), \enspace (t,\tw_0) \mapsto \tsA(t,\tw_0)
    \end{equation*}
    is continuous.
\end{lem}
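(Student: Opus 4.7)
The plan is to verify joint continuity of $\tsA(t,\tw_0)$ block by block, exploiting that both $\tsX_1$ and $\tsX_0$ are independent of $(t,\tw_0)$ and that the operator matrix \eqref{eq:tsA} is upper triangular with zero block on the rigid-body component. Thus it suffices to show that each of $(t,\tw_0) \mapsto \tAHm(t,\tw_0)$, $\tB(t,\tw_0)$, and $\tD(t,\tw_0)$ is continuous in the respective operator-norm topology. Each of these blocks is an explicit linear differential operator of order at most two whose coefficients are read off from \eqref{eq:Hibler op transformed}, \eqref{eq:B_1,2} and \eqref{eq:transformed Laplacian h,a}. I would therefore reduce operator-norm continuity to joint continuity of every coefficient as a $\rC(\overline{\cD})$-valued function of $(t,\tw_0)$, using the pointwise multiplier estimate $\| m\, \partial^\alpha v\|_{\rL^q} \le \| m\|_{\rC(\overline{\cD})} \| v\|_{\rW^{|\alpha|,q}}$.

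The continuous dependence splits into two disjoint sources. On the state side, the coefficients involve $\tw_0$ only through $\eps(\tu_0)$, $\thseaice_0$, $\ta_0$ and their first spatial derivatives, so the embedding $\rB_{qp}^{2-\nicefrac{2}{p}}(\cD) \hra \rC^1(\overline{\cD})$ from \eqref{eq:emb Besov C1} makes the assignment $\tw_0 \mapsto (\eps(\tu_0), \thseaice_0, \ta_0, \nabla \thseaice_0, \nabla \ta_0)$ continuous with values in a finite product of copies of $\rC(\overline{\cD})$. On the time side, the coefficients involve $t$ only through $Y(t,\cdot)$ and $\dot Y(t,\cdot)$ and their spatial derivatives up to second order, where $Y$ is the a priori constructed transform associated with the fixed pair $(\xi,\Omega)\in \rW^{1,p}(0,T;\R^2)\times \rW^{1,p}(0,T)$ from \autoref{rmk:procedure Z and Y from xi and Omega}. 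By the regularity $Y \in \rC^1(0,T;\rC^\infty(\R^2))$ supplied by \autoref{prop:dep Z and Y on xi and Omega}, each such partial derivative is a continuous $\rC(\overline{\cD})$-valued function of $t$. Since these two dependencies are on disjoint variables, they combine to a jointly continuous map of $(t,\tw_0)$ into a finite product of copies of $\rC(\overline{\cD})$.

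To finish I would compose with the algebraic functions that produce the actual coefficients: polynomials, the exponential $\mathrm{e}^{-c(1-\ta_0)}$, the square root $\triangle_\delta(\eps(\tu_0)) = \sqrt{\delta + \triangle^2(\eps(\tu_0))}$, the reciprocals $1/\thseaice_0$ and $1/\triangle_\delta(\eps(\tu_0))$, and the symmetrized tensor $\mathbb{S}\eps(\tu_0)$. The decisive point is that $\tw_0 \in V$ enforces $\thseaice_0 > \kappa > 0$, while the regularization $\delta > 0$ yields $\triangle_\delta \ge \sqrt{\delta}$, so both reciprocals remain uniformly bounded on $\overline{\cD}$. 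Consequently all the corresponding Nemytskii operators are continuous from $\rC(\overline{\cD})$ to itself, and composition with the primitive map from the previous paragraph delivers joint continuity of every coefficient. Together with the pointwise multiplier estimate this yields $\| \tsA(t_n,\tw_{0,n}) - \tsA(t,\tw_0)\|_{\sL(\tsX_1,\tsX_0)} \to 0$ whenever $(t_n,\tw_{0,n}) \to (t,\tw_0)$ in $[0,T]\times V$.

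The only mildly technical point is the fully quasilinear Hibler coefficient $a_{ij}^{kl}$ from \eqref{eq:coeff}, which contains the tensor product $(\mathbb{S}\eps)_{ik}(\mathbb{S}\eps)_{jl}/\triangle_\delta^2$; the expression is long to write out, but thanks to $\delta>0$ it is a bounded smooth function of $(\eps(\tu_0), P(\thseaice_0,\ta_0))$, so no genuine singularity arises and the proof remains at the level of bookkeeping rather than new analysis.
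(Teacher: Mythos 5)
Your overall strategy coincides with the paper's proof: reduce to the three nontrivial blocks $\tAHm$, $\tB$, $\tD$, pass from operator-norm continuity to continuity of the coefficients in the sup-norm via H\"older (your ``pointwise multiplier estimate''), use the embedding $\rB_{qp}^{2-\nicefrac{2}{p}}(\cD) \hra \rC^1(\overline{\cD})$ to control the state-dependent coefficients, and use $\thseaice_0 > \kappa$ together with $\triangle_\delta \ge \sqrt{\delta}$ to keep the reciprocals bounded. Your direct argument that the quasilinear coefficients $a_{ij}^{kl}$ are smooth bounded functions of $(\eps(\tu_0), P(\thseaice_0,\ta_0))$ thanks to $\delta>0$ is a reasonable self-contained substitute for the paper's citation of \cite[Section~6]{BDHH:22}.

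There is, however, one genuine gap in how you handle the time/transform side. You describe $Y$ as ``the a priori constructed transform associated with the fixed pair $(\xi,\Omega)$'' and conclude only that $t \mapsto \partial^\alpha Y(t,\cdot)$ is continuous. But in this lemma the transform is \emph{not} built from a fixed pair: by \autoref{rmk:procedure Z and Y from xi and Omega}, $Y$ is constructed from the components $(\xi_0,\Omega_0)$ of the argument $\tw_0$, so continuity of $\tsA$ in $\tw_0$ requires continuity of $(t,\xi_0,\Omega_0) \mapsto \partial^\alpha Y$ and $(t,\xi_0,\Omega_0)\mapsto g^{ij}$ \emph{jointly}, including in the $(\xi_0,\Omega_0)$ directions. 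Your claim that the state-side and time-side dependencies ``are on disjoint variables'' obscures this: the $Y$-dependent coefficients do depend on $\tw_0$, namely through $(\xi_0,\Omega_0)$. The missing ingredient is exactly the second estimate in \autoref{prop:dep Z and Y on xi and Omega}, which bounds $\| \partial^\beta (Y_1 - Y_2) \|_{\infty,\infty}$ by $C(K_i)\,T\,(\|\xi_1-\xi_2\| + \|\Omega_1-\Omega_2\|)$ and hence yields the required continuity in $(\xi_0,\Omega_0)$; the paper invokes precisely this in its display \eqref{eq:continuity}. Once that is added, your argument closes. (A minor cosmetic point: $\dot Y$ does not occur in $\tsA$ at all --- it only enters the right-hand side via $\tsM$ --- so it need not be discussed here.)
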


\begin{proof}
In order to show the continuity of the operator matrix $[0,T] \times V \to \sL(\tsX_1,\tsX_0), \enspace (t,\tw_0) \mapsto \tsA(t,\tw_0)$, it suffices to show continuity of $(t,\tw_0) \mapsto \tAHm(t,\tw_0)$, $(t,\tw_0) \mapsto -\tB(t,\tw_0)$ and $(t,\tw_0) \mapsto -\tD(t,\tw_0)$ separately. 

First, we observe that for $\tw_0 = (\tu_0,\thseaice_0,\ta_0,\xi_0,\Omega_0) \in V$, the angular and translational velocities $\xi_0$ and $\Omega_0$ are independent of time, so it is in particular valid that $(\xi_0,\Omega_0) \in \rW^{1,p}(0,T;\R^2) \times \rW^{1,p}(0,T)$.
Consequently, we are in the framework of \autoref{rmk:procedure Z and Y from xi and Omega} as well as \autoref{prop:dep Z and Y on xi and Omega}.
As a result, we obtain continuity of
\begin{equation}\label{eq:continuity}
    \begin{aligned}
        &[0,T] \times \R^3 \to \rL^\infty(\R^2) \colon (t,\xi_0,\Omega_0) \mapsto \partial_{i} Y \quad \text{as well as} \\
	    &[0,T] \times \R^3 \to \rL^\infty(\R^2) \colon (t,\xi_0,\Omega_0) \mapsto \partial_i \partial_j Y, \quad \text{and then also of}\\
        &[0,T] \times \R^3 \to \rL^\infty(\R^2) \colon  (t,\xi_0,\Omega_0) \mapsto g^{ij},
    \end{aligned}
\end{equation}
where $g^{ij}$ denotes the contravariant tensor as introduced in \eqref{eq:gij}.

Using H\"older's inequality in conjunction with the fact that $\tL(t,\xi_0,\Omega_0)$ from \eqref{eq:transformed Laplacian h,a} is a differential operator of second order, we deduce from \eqref{eq:continuity} that $[0,T] \times \R^3 \to \sL(\rW^{2,q}(\cD),\rL^q(\cD)) \colon (t,\xi_0,\Omega_0) \mapsto \tL(t,\xi_0,\Omega_0)$ is continuous.
It readily follows from the shape of $-\tD(t,\xi_0,\Omega_0)$ from \eqref{eq:transformed laplacians} that
\begin{equation*}
    [0,T] \times V \to \sL(\rW^{2,q}(\cD)^2,\rL^q(\cD)^2) \colon (t,\tw_0) \mapsto - \tD(t,\tw_0)
\end{equation*}
is continuous.

Next, we note that for $\tw_0 = (\tu_0,\thseaice_0,\ta_0,\xi_0,\Omega_0) \in V$, we get $\frac{1}{\rice \thseaice_0} < \frac{1}{\rice \kappa} < \infty$.
In addition, we infer continuity of
\begin{equation}\label{eq:cont of factors}
    \rL^\infty(\cD)^2 \to \rL^\infty(\cD), (\thseaice_0,\ta_0) \mapsto \mathrm{e}^{-c(1-\ta)} \quad \text{and} \quad \rL^\infty(\cD)^2 \to \rL^\infty(\cD), (\thseaice_0,\ta_0) \mapsto \thseaice_0 \mathrm{e}^{-c(1-\ta)},
\end{equation}
so the embedding
\begin{equation}\label{eq:V emb Linfty}
    V \hra \rL^\infty(\cD)^4 \times \R^3,
\end{equation}
following from \eqref{eq:emb Besov C1} and \autoref{lem:trace space}, and H\"older's inequality yield continuity of
\begin{equation*}
    [0,T] \times V \to \sL(\rW^{1,q}(\cD)^2,\rL^q(\cD)^2) \colon (t,\tw_0) \mapsto - \tB(t,\tw_0)
\end{equation*}
upon recalling the shape of $\tB(t,\tw_0)$ from \eqref{eq:transformed lower order terms}. 

Finally, we show the continuity of $\tAHm$:
In view of \eqref{eq:maximal tranformed sea ice op}, the above observation that $\frac{1}{\rice \thseaice_0} < \frac{1}{\rice \kappa} < \infty$ for $\tw_0 = (\tu_0,\thseaice_0,\ta_0,\xi_0,\Omega_0) \in V$ implies that it suffices to show the continuity of $\tAH$ defined in \eqref{eq:Hibler op transformed}.
By H\"older's inequality, \eqref{eq:continuity}, \eqref{eq:cont of factors}, \eqref{eq:V emb Linfty}, the shapes of $a_{ij}^{kl}$ as in \eqref{eq:coeff} and $a_{ij}^{klm}$ as in \autoref{sec:coordinate transform} and the continuous dependence of the coefficients $a_{ij}^{kl}$ on $\tu_0$, $\thseaice_0$ as well as $\ta_0$, see \cite[Section~6]{BDHH:22}, we conclude continuity of 
\begin{equation*}
    [0,T] \times V \to \sL(\rW^{2,q}(\cD)^2,\rL^q(\cD)^2) \colon (t,\tw_0) \mapsto \tAH(t,\tw_0).
\end{equation*}

The assertion of the lemma then follows by concatenating the previous arguments and observing that
\begin{equation}\label{eq:emb tsX_1}
    \tsX_1 \hra \rW^{2,q}(\cD)^4 \times \R^3
\end{equation}
by virtue of $\tsX_1 = \sS^{-1} (D(\sA_0))$, see the proof of \autoref{prop: max. Reg.} for this relation of $D(\sA_0)$ as in \eqref{eq:decoupled operator matrix}, and the classical embedding $D(\sA_0) \hra \rW^{2,q}(\cD)^4 \times \R^3$.
\end{proof}

Now we show the Lipschitz continuity of the system matrix. 

\begin{lem}\label{lem:lipschitz systemmatrix}
	Let $p,q \in (1,\infty)$ be such that \eqref{eq:condition p and q} is valid, and consider $\tw_0 \in V$ fixed.
	Then there exists $R_0 > 0$ and a constant $L>0$ independent of $\tau$ such that $\overline{B}_{\tsX_\gamma}(\tw_0,R_0) \subset V$ and
	\begin{equation*}
		\| \tsA(\tau,\tw_1)\tu - \tsA(\tau,\tw_2)\tu \|_{\tsX_0} \le L \cdot \| \tw_1 - \tw_2 \|_{\tsX_\gamma} \cdot \| \tu \|_{\tsX_1}  
	\end{equation*}
	holds for all $\tw \in \tsX_1$, for all $\tw_1, \tw_2 \in \tsX_\gamma$ with $\| \tw_i - \tw_0 \|_{\tsX_\gamma} \le R_0$, $i=1,2$, and for $\tau \in [0,T]$, with $T>0$ sufficiently small.
\end{lem}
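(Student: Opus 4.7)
The plan is to exploit the upper triangular structure of $\tsA$ and estimate the three nontrivial blocks $\tAHm(\tau,\tw_1) - \tAHm(\tau,\tw_2)$, $\tB(\tau,\tw_1) - \tB(\tau,\tw_2)$ and $\tD(\tau,\tw_1) - \tD(\tau,\tw_2)$ separately in the operator norm from $\tsX_1$ to $\tsX_0$.

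First I would choose $R_0 > 0$ so small that the embedding $\tsX_\gamma \hra \rC^1(\overline{\cD})^4 \times \R^3$, coming from \eqref{eq:emb Besov C1} and \autoref{lem:trace space}, guarantees that every $\tw_i \in \overline{B}_{\tsX_\gamma}(\tw_0,R_0)$ still satisfies the pointwise constraints $\thseaice_i \ge \kappa/2$ and $\ta_i \in (\alpha/2, 1 - \alpha/2)$, while $\xi_i,\Omega_i$ stay in a bounded subset of $\R^3$. In particular, the denominators $\rice \thseaice_i$ and the regularized quantity $\triangle_\delta(\eps(\tu_i)) \ge \sqrt{\delta}$ are uniformly bounded away from zero on this ball, so every coefficient function appearing in $\tsA$ is smooth in its arguments on the relevant bounded region. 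For each $\tw_i$ the inverse transform $Y_i$ is built via \autoref{rmk:procedure Z and Y from xi and Omega} from the constant pair $(\xi_i,\Omega_i)$, viewed as an element of $\rW^{1,p}(0,T;\R^3)$. \autoref{prop:dep Z and Y on xi and Omega} then yields uniform bounds on $\|\partial^\alpha Y_i\|_{\infty,\infty}$ for $1 \le |\alpha| \le 3$ together with the crucial comparison estimate $\|\partial^\beta (Y_1 - Y_2)\|_{\infty,\infty} \le C T \bigl(|\xi_1 - \xi_2| + |\Omega_1 - \Omega_2|\bigr)$ for $0 \le |\beta| \le 3$, which carries the factor $T$ that will be absorbed at the end.

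For the Laplacian block the dependence of $\tD$ in \eqref{eq:transformed Laplacian h,a} is only on $\partial_k Y$ and $\partial_k \partial_j Y$ through $g^{jk}$ and $\Delta Y$; H\"older's inequality combined with the $\rW^{2,q}$-control on the argument and the above $Y$-comparison estimate yields an $\rL^q$-bound of order $C T (|\xi_1 - \xi_2| + |\Omega_1 - \Omega_2|)\,\|(\thseaice,\ta)\|_{\rW^{2,q}}$. Writing $\tB$ according to \eqref{eq:transformed lower order terms} and \eqref{eq:B_1,2}, the coefficients involve the smooth scalar factors $1/(\rice \thseaice_i)$, $\mathrm{e}^{-c(1-\ta_i)}$, $\thseaice_i \mathrm{e}^{-c(1-\ta_i)}$ and $\partial_i Y_i$. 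I would insert a telescoping sum, apply the mean value theorem scalar by scalar, and bound each scalar difference in $\rL^\infty$ either by the Besov embedding (for differences in $\thseaice_i, \ta_i$) or by the $Y$-comparison estimate. Combined with one spatial derivative on the argument coming from \eqref{eq:B_1,2}, this produces an $\rL^q$-bound of the desired form.

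The quasilinear block $\tAHm(\tau,\tw_1) - \tAHm(\tau,\tw_2)$ is the main obstacle and demands the most care. Using \eqref{eq:Hibler op transformed} with $a_{ij}^{klm} = (\partial_k Y_m)\,a_{ij}^{kl}(\eps,P)$ and $a_{ij}^{kl}$ as in \eqref{eq:coeff}, the difference splits into three contributions: the reciprocal $1/(\rice \thseaice_i)$ handled as above; the derivatives $\partial Y_i,\partial^2 Y_i$ controlled by \autoref{prop:dep Z and Y on xi and Omega}; and the rational functions $a_{ij}^{kl}(\eps(\tu_i),P(\thseaice_i,\ta_i))$ together with the lower-order factor $(2\triangle_\delta(\eps(\tu_i)))^{-1} p^\ast \mathrm{e}^{-c(1-\ta_i)} (\S \eps(\tu_i))$. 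Since $\triangle_\delta(\eps) \ge \sqrt{\delta}$, these are $\rC^\infty$ on the relevant bounded region, and the mean value theorem in $(\eps,P)$ bounds their pointwise difference by $|\eps(\tu_1) - \eps(\tu_2)| + |P(\thseaice_1,\ta_1) - P(\thseaice_2,\ta_2)|$. The latter is controlled in $\rL^\infty$ by $\|\tw_1 - \tw_2\|_{\tsX_\gamma}$ via $\rB_{qp}^{2-\nicefrac{2}{p}}(\cD) \hra \rC^1(\overline{\cD})$, and H\"older's inequality paired against $D_k D_l \tu_j \in \rL^q$ coming from $\tu \in \rW^{2,q}$ then closes this block. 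Summing the three estimates and choosing $T > 0$ sufficiently small so that the factor $T$ appearing in all $Y$-dependent terms can be absorbed into a uniform Lipschitz constant $L$ completes the proof.
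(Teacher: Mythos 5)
Your proposal is correct and follows essentially the same route as the paper: block-by-block estimation of $\tAHm$, $\tB$ and $\tD$, the comparison estimates for $Y_1-Y_2$ from \autoref{prop:dep Z and Y on xi and Omega} carrying the factor $T$, and H\"older's inequality combined with the embedding $\rB_{qp}^{2-\nicefrac{2}{p}}(\cD)\hra\rC^1(\overline{\cD})$ to control coefficient differences in $\rL^\infty$ against $\rW^{2,q}$-norms of the argument. The only cosmetic difference is that you derive the Lipschitz bounds for the quasilinear coefficients $a_{ij}^{kl}$ directly via the mean value theorem (using $\trid(\eps)\ge\sqrt{\delta}$), whereas the paper cites these estimates from \cite[Lemma~6.2]{BDHH:22}.
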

\begin{proof}
We recall that $\tw_i = (\tu_i,\thseaice_i,\ta_i,\xi_i,\Omega_i)$ for $i \in \{0,1,2\}$.
First, it follows from $\tw_1, \tw_2 \in V$ that $\xi_i$ and $\Omega_i$, $i=1,2$, are independent of time, so it holds that $(\xi_i,\Omega_i) \in \rW^{1,p}(0,T;\R^2) \times \rW^{1,p}(0,T)$, and we are thus in the setting of \autoref{rmk:procedure Z and Y from xi and Omega} and \autoref{prop:dep Z and Y on xi and Omega}.
Using that
\begin{equation*}
    K_i = \| \xi_i \|_{\rL^\infty(0,T;\R^2)} + \| \Omega_i \|_{\rL^\infty(0,T)} = | \xi_i | + | \Omega_i | \le c(R_0 + \| \tw_0 \|_{\tsX_\gamma}), \quad i=1,2, 
\end{equation*}
where we additionally exploited that $(\xi_i,\Omega_i)$ are independent of time.
Employing this observation in conjunction with \autoref{prop:dep Z and Y on xi and Omega}, we argue that
\begin{equation}\label{eq:lipschitz est Z and Y}
    \begin{aligned}
            \| \partial^\alpha Z_i \|_{\infty,\infty} + \| \partial^\alpha Y_i \|_{\infty,\infty} &\le \Tilde{c} (R_0 + \| \tw_0 \|_{\tsX_\gamma}), \quad i=1,2, \quad \text{and}\\
            \| \partial^\beta (Z_1 - Z_2) \|_{\infty,\infty} + \| \partial^\beta (Y_1 - Y_2) \|_{\infty,\infty} &\le \Tilde{c}(R_0 + \| \tw_0 \|_{\tsX_\gamma}) T (|\xi_1 - \xi_2 | + | \Omega_1 - \Omega_2 |)
        \end{aligned}
\end{equation}
for all multi-indices $\alpha, \beta$ such that $1 \le |\alpha| \le 3$ and $0 \le |\beta| \le 3$.
As above, the index $i$ indicates that $Z_i$ and $Y_i$ correspond to $(\xi_i,\Omega_i)$, $i=1,2$.

As in the proof of \autoref{lem:continuity in A1}, it is sufficient to verify the Lipschitz properties of the entries $\tAHm(t,\tw)$, $-\tB(t,\tw)$ and $-\tD(t,\tw)$ separately.
First, using H\"older's inequality, for $\tw_1, \tw_2 \in \overline{B}_{\tsX_\gamma}(\tw_0,R_0) \subset V$, $\tw \in \tsX_1$ and $\tau \in [0,T]$, we get for $\tL$ as in \eqref{eq:transformed Laplacian h,a} that
\begin{equation*}
    \begin{aligned}
        \|\tL(\tau,\xi_1,\Omega_1)\thseaice-
		\tL(\tau,\xi_2,\Omega_2)\thseaice\|_{\rL^q(\cD)} &\le \sum_{j = 1}^2
		\| \Delta Y_j (\tau,\xi_1,\Omega_1)
		- \Delta Y_j (\tau,\xi_2,\Omega_2) \|_{\rL^\infty(\cD)} 
		\cdot \| \partial_j \thseaice \|_{\rL^q(\cD)}\\
        &\quad + \sum_{j,k=1}^2 \| g^{ij} (\tau,\xi_1,\Omega_1)
		- g^{ij} (\tau,\xi_2,\Omega_2) \|_{\rL^\infty(\cD)} 
		\cdot \| \partial_k \partial_j \thseaice \|_{\rL^q(\cD)}\\
        &\le C (R_0 + \| \tw_0 \|_{\tsX_\gamma}) T \| \tw_1 - \tw_2 \|_{\tsX_\gamma} \| \tw \|_{\tsX_1}
    \end{aligned}
\end{equation*}
by virtue of \eqref{eq:lipschitz est Z and Y} and the embedding from \eqref{eq:emb tsX_1}.
We emphasize that the Lipschitz constant is independent of $\tau \in [0,T]$.
Furthermore, we find a similar estimate for $\|\tL(\tau,\xi_1,\Omega_1)\ta- \tL(\tau,\xi_2,\Omega_2)\ta\|_{\rL^q(\cD)}$, so for $\tD(t,\xi_i,\Omega_i)$ as in \eqref{eq:transformed laplacians}, we get
\begin{equation*}
    \left\|\tD(\tau,\xi_1,\Omega_1)\binom{\thseaice}{\ta}-
		\tD(\tau,\xi_2,\Omega_2)\binom{\thseaice}{\ta}\right\|_{\rL^q(\cD)^2} \le C (R_0 + \| \tw_0 \|_{\tsX_\gamma}) T \| \tw_1 - \tw_2 \|_{\tsX_\gamma} \| \tw \|_{\tsX_1}.
\end{equation*}

Next, making use of H\"older's inequality, \eqref{eq:lipschitz est Z and Y}, $\tw_i \in V$, with $V$ as in \eqref{eq:V}, resulting in $\mathrm{e}^{-c(1-\ta_i)}$ being finite, the embedding \eqref{eq:V emb Linfty} as well as the Lipschitz estimates of terms associated to sea ice, see \cite[Lemma~6.2]{BDHH:22}, for $\tB_1(t,\tw_i)$ as in \eqref{eq:B_1,2}, we infer that
\begin{equation*}
    \begin{aligned}
        \| \tB_1(\tau,\tw_1) \thseaice - \tB_1(\tau,\tw_2) \thseaice \|_{\rL^q(\cD)^2} &\le C \sum_{j=1}^2 \| \mathrm{e}^{-c(1-\ta_1)} \partial_i Y_{1,j} - \mathrm{e}^{-c(1-\ta_2)} \partial_i Y_{2,j} \|_{\rL^\infty(\cD)^2} \| \thseaice \|_{\rW^{1,q}(\cD)}\\
        &\le C\left(\| \mathrm{e}^{-c(1-\ta_1)} (\partial_i Y_1 - \partial_i Y_2) \|_{\rL^\infty(\cD)^2}\right.\\
        &\quad \left. + \| (\mathrm{e}^{-c(1-\ta_1)} - \mathrm{e}^{-c(1-\ta_2)}) \partial_i Y_2 \|_{\rL^\infty(\cD)^2}\right) \| \thseaice \|_{\rW^{1,q}(\cD)}\\
        &\le C (R_0 + \| \tw_0 \|_{\tsX_\gamma}) (T + 1) \| \tw_1 - \tw_2 \|_{\tsX_\gamma} \| \tw \|_{\tsX_1}.
    \end{aligned}
\end{equation*}
Similar arguments lead to an analogous estimate for $\tB_2(\tau,\tw_1) \thseaice - \tB_2(\tau,\tw_2) \thseaice$.
Additionally invoking that $\frac{1}{\rice \thseaice_i} < \frac{1}{\rice \kappa} < \infty$ by $\tw_i \in V$ and employing the embedding from \eqref{eq:emb tsX_1}, we deduce that for $\tB(t,\tw_i)$ as in \eqref{eq:transformed lower order terms}, it holds that
\begin{equation*}
    \| \tB(\tau,\tw_1) - \tB(\tau,\tw_2) \|_{\rL^\infty(\cD)^2} \le C (R_0 + \| \tw_0 \|_{\tsX_\gamma}) (T + 1) \| \tw_1 - \tw_2 \|_{\tsX_\gamma} \| \tw \|_{\tsX_1}.
\end{equation*}

Recalling the shapes of $\tAHm$ and $\tAH$ from \eqref{eq:maximal tranformed sea ice op} and \eqref{eq:Hibler op transformed}, respectively, and exploiting H\"older's inequality, \eqref{eq:lipschitz est Z and Y}, the embeddings \eqref{eq:V emb Linfty} and \eqref{eq:emb tsX_1}, $\tw_i \in V$ as well as the estimates of the quasilinear terms in \cite[Lemma~6.2]{BDHH:22}, we find by means of a similar procedure as in the estimates above that
\begin{equation*}
    \| \tAHm(\tau,\tw_1) \tw - \tAHm(\tau,\tw_2) \tw \|_{\rL^q(\cD)^2} \le C (R_0 + \| \tw_0 \|_{\tsX_\gamma}) (T + 1) \| \tw_1 - \tw_2 \|_{\tsX_\gamma} \| \tw \|_{\tsX_1}.
\end{equation*}

Concatenating the above estimates and observing that the associated Lipschitz constants do only depend on $R_0$, $\|\tw_0\|_{\tsX_\gamma}$ and $T$, but not on $\tau \in [0,T]$, we conclude the statement of the lemma.
\end{proof}

\subsection{Lipschitz properties of the right-hand side}\label{ssec:A2}

\

This subsection is devoted to checking the aspect (A2) of \autoref{ass:quasilinear thm}.

\begin{lem}\label{lem:(i)-(iii) in (A2)}
    Let $p,q \in (1,\infty)$ be such that \eqref{eq:condition p and q} holds true, and let $V$ be as in \eqref{eq:V}, $\tsF$ as in \eqref{eq:rhs operator}, $\tsX_0$ as in \eqref{eq:ground space} and $\tsX_1$ as in \eqref{eq:domain tsA}, and suppose that $F \in \rL^p(0,T;\R^2)$ as well as $N \in \rL^p(0,T)$.
    Then
    \begin{enumerate}[(i)]
        \item $\tsF(\cdot,\tw_0)$ is measurable for every $\tw_0 \in V$,
        \item $\tsF(\tau,\cdot) \in \rC(V,\tsX_0)$ for almost all $\tau \in [0,T]$, and
        \item $\tsF(\cdot,\tw_0) \in \rL^p(0,T;\tsX_0)$ is valid for every $\tw_0 \in V$.
    \end{enumerate}
\end{lem}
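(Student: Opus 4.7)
The plan is to verify (i)--(iii) separately by decomposing $\tsF$ according to \eqref{eq:rhs operator} into the pieces $\tsG_1$, $\tsG_2$, $\tsM$, and the summand $(0,0,0,\Omega\xi^\perp,0)^\T$. Throughout, two essential observations are in force: first, for $\tw_0 \in V$ the components $\xi_0, \Omega_0$ are constants in $t$, so \autoref{rmk:procedure Z and Y from xi and Omega} together with \autoref{prop:dep Z and Y on xi and Omega} yields continuity in $t$ on the compact interval $[0,T]$ (with $T$ small enough as in \autoref{lem:JXinvertible}) of the transformation data $Y$, $\partial^\alpha Y$ for $|\alpha| \le 3$, and $\dot{Y}$; second, the Besov embedding \eqref{eq:emb Besov C1} combined with \autoref{lem:trace space} gives $\tsX_\gamma \hookrightarrow \rB_{qp}^{2-\nicefrac{2}{p}}(\cD)^4 \times \R^3 \hookrightarrow \rC^1(\overline{\cD})^4 \times \R^3$.

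For (i), fix $\tw_0 \in V$: each entry of $\tsF(\cdot,\tw_0)$ is a product of continuous-in-$t$ transformation data with time-independent spatial functions derived from $\tw_0$, plus contributions from $F$ and $N$ which are measurable by assumption. Preservation of measurability under composition and pointwise multiplication then yields (i). For (iii), the same continuity in $t$ on $[0,T]$ provides uniform-in-$t$ bounds in the $\tsX_0$-norm on every summand except the external forces; together with $F \in \rL^p(0,T;\R^2)$ and $N \in \rL^p(0,T)$, this delivers $\tsF(\cdot,\tw_0) \in \rL^p(0,T;\tsX_0)$.

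The substantive point is (ii), continuity of $\tw \mapsto \tsF(\tau,\tw)$ for almost every $\tau$. I would treat the volume contributions from $\tsG_1$ and $\tsM \tw$ first: by the embedding above, both $\tw$ and its first spatial derivatives depend continuously on $\tw$ in the $\rC^0(\overline{\cD})$-norm, while the constraints $\thseaice > \kappa$ and $\ta \in (\alpha,1-\alpha)$ built into $V$, together with the regularization $\delta > 0$ in $\trid$, keep every denominator appearing in $f_1(\tu,\thseaice)$, $S_h(\thseaice,\ta)$, $S_a(\thseaice,\ta)$, in the quadratic transport terms, and in $\sT_\delta$ uniformly away from zero on a small $\tsX_\gamma$-neighborhood of each point. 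Composition of continuous Nemytskij-type maps then yields continuity into $\rC^0(\overline{\cD})^4 \hookrightarrow \rL^q(\cD)^4$. For the boundary integrals in $\tsG_2$, the continuity of the trace operator together with the smoothness of $\sT_\delta$ on the relevant range gives continuity of the integrands in $\rC^0(\Gamma)$, and hence of the integrals with values in $\R^2$ and $\R$ respectively.

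The most delicate ingredient will be ensuring that the boundary integrand $\sT_\delta(\tu,\thseaice,\ta)|_\Gamma$ depends continuously on $\tw \in \tsX_\gamma$: since $\sT_\delta$ involves the deformation tensor $\tepsilon(\tu)$, this requires pointwise control of $\nabla \tu$ up to $\Gamma$, which is precisely what the embedding $\rB_{qp}^{2-\nicefrac{2}{p}}(\cD) \hookrightarrow \rC^1(\overline{\cD})$ provides under assumption \eqref{eq:condition p and q}. Since $V$ is open in $\tsX_\gamma$ and the positivity constraints on $\thseaice$ and $\ta$ are stable under small perturbations in this norm, each point of $V$ admits a neighborhood on which the entire composition is continuous, completing (ii).
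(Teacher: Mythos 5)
Your proposal is correct and follows essentially the same route as the paper: the paper likewise reduces (i) and (iii) to the time-regularity of $Z$, $Y$ and $Q$ supplied by \autoref{rmk:procedure Z and Y from xi and Omega} and \autoref{prop:dep Z and Y on xi and Omega} together with the assumptions on $F$ and $N$, and proves (ii) by the same continuity arguments (H\"older, the embedding of the trace space into $\rC^1(\overline{\cD})^4\times\R^3$, and the lower bounds on $\thseaice$, $\ta$ and $\trid$ coming from $V$ and the regularization) that it already used for the operator matrix. Your explicit handling of the boundary integrals in $\tsG_2$ via the trace into $\rC^0(\Gamma)$ merely fills in a detail the paper leaves implicit, and is consistent with its approach.
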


\begin{proof}
The assumption $\tw_0 \in V \subset \tsX_\gamma$ ensures that the terms involved are well-defined.
In addition, for $\tw_0 = (\tu_0,\thseaice_0,\ta_0,\xi_0,\Omega_0)) \in V$, we deduce that $\xi_0$ and $\Omega_0$ are independent of time, so it is especially valid that $(\xi_0,\Omega_0) \in \rW^{1,p}(0,T;\R^2) \times \rW^{1,p}(0,T)$.
Consequently, it is justified to use \autoref{rmk:procedure Z and Y from xi and Omega} as well as \autoref{prop:dep Z and Y on xi and Omega}.

The shape of $\tsF$ as seen in \eqref{eq:rhs operator}, see also \eqref{eq:G1 transformed}, \eqref{eq:G2 transformed} as well as \eqref{eq:sM}, in conjunction with the regularity of the $Z$ and $Y$ in time and space as discussed in \autoref{prop:dep Z and Y on xi and Omega} and the measurability of the initial objects on the moving sea ice domain yield that (i) is satisfied.

Employing similar arguments as in the proof of \autoref{lem:continuity in A1}, we find that (ii) is also valid.
Property~(iii) follows by similar arguments as (i), and in particular, we take the assumptions concerning $F$ and $N$ from \autoref{thm:main theorem} into account and observe the way they are transformed in \autoref{sec:coordinate transform} as well as $Q \in \rW^{2,p}(0,T;\R^{2 \times 2}) \hra \rL^\infty(0,T;\R^{2 \times 2})$, see \autoref{rmk:procedure Z and Y from xi and Omega}.
\end{proof}

It remains to verify the Lipschitz property of $\tsF$, i.e., aspect (A2)(iv) in \autoref{ass:quasilinear thm}.
Using a similar strategy as for the treatment of $\tsA$ in \autoref{ssec:A1}, i.e., making use of \autoref{rmk:procedure Z and Y from xi and Omega} as well as \autoref{prop:dep Z and Y on xi and Omega} and invoking the respective estimates of the sea ice terms from \cite[Lemma~6.2]{BDHH:22}, we get the following result for $\tsG_1$ as in \eqref{eq:G1 transformed}.

\begin{lem}\label{lem:treatment of G1}
    Let $p,q \in (1,\infty)$ be such that \eqref{eq:condition p and q} is satisfied, and let $\tw_0 \in V$ be fixed.
    Then there is $R_0 > 0$ and a constant $L > 0$ such that $\overline{B}_{\tsX_\gamma}(\tw_0,R_0) \subset V$, and
    \begin{equation*}
        \| \tsG_1(\tau,\tw_1) - \tsG_1(\tau,\tw_2) \|_{\tX_0} \le L \cdot \| \tw_1 - \tw_2 \|_{\tsX_\gamma}
    \end{equation*}
    holds for all $\tw_1, \tw_2 \in \tsX_\gamma$ with $\| \tw_i - \tw_0 \|_{\tsX_\gamma} \le R_0$, $i=1,2$, and for $\tau \in [0,T]$, where $T>0$ is sufficiently small.
\end{lem}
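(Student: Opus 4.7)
The plan follows the strategy used for \autoref{lem:lipschitz systemmatrix}, with a crucial simplification: the bound asserted here carries no $\| \tw \|_{\tsX_1}$-factor on the right, because $\tsG_1(\tau, \cdot)$ only involves first-order spatial derivatives of $\tw$. Since \eqref{eq:condition p and q} implies $2 - \nicefrac{2}{p} > 1 + \nicefrac{2}{q}$, \cite[Theorem~4.6.1]{Tri:78} together with \autoref{lem:trace space} yields the embedding $\tsX_\gamma \hookrightarrow \rC^1(\overline{\cD})^4 \times \R^3$. In particular $\tu, \nabla \tu, \thseaice, \nabla \thseaice, \ta, \nabla \ta$ are uniformly bounded in $\rL^\infty(\cD)$ whenever $\tw \in \overline{B}_{\tsX_\gamma}(\tw_0, R_0)$. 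Openness of $V$ in $\tsX_\gamma$ allows us to choose $R_0 > 0$ small enough so that $\overline{B}_{\tsX_\gamma}(\tw_0, R_0) \subset V$, which in turn guarantees the uniform pointwise bounds $\thseaice_i \ge \nicefrac{\kappa}{2}$ and $\ta_i \in (\nicefrac{\alpha}{2}, 1-\nicefrac{\alpha}{2})$ for $i = 1, 2$. As in the proof of \autoref{lem:lipschitz systemmatrix}, time-independence of $\xi_i, \Omega_i$ for $\tw_i \in V$ puts us into the scope of \autoref{rmk:procedure Z and Y from xi and Omega} and \autoref{prop:dep Z and Y on xi and Omega}, so that $\partial^\alpha Y_i$ is bounded in $\rL^\infty$ uniformly and $\partial^\beta(Y_1 - Y_2)$ satisfies the analogue of \eqref{eq:lipschitz est Z and Y}.

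I would then estimate the three components of $\tsG_1$ separately in the natural $\rL^q(\cD)$-norm. For the transport terms $\sum_{i,j} \tu_i (\partial_i Y_j) \partial_j \tu$ in the first component I would telescope the difference by adding and subtracting intermediate products, producing summands of the form $(\tu_{1,i} - \tu_{2,i})(\partial_i Y_{1,j})(\partial_j \tu_1)$, $\tu_{2,i} (\partial_i Y_{1,j} - \partial_i Y_{2,j})(\partial_j \tu_1)$ and $\tu_{2,i}(\partial_i Y_{2,j})(\partial_j \tu_1 - \partial_j \tu_2)$, and bound each via H\"older's inequality in the pattern $\rL^\infty \cdot \rL^\infty \cdot \rL^q$ using the $\rC^1(\overline{\cD})$-embedding of $\tsX_\gamma$ and \autoref{prop:dep Z and Y on xi and Omega}; the middle summand produces a factor $T(|\xi_1 - \xi_2| + |\Omega_1 - \Omega_2|) \le CT \|\tw_1 - \tw_2\|_{\tsX_\gamma}$. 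The analogous treatment applies to the transport terms for $\thseaice$ and $\ta$. For the pointwise nonlinearities $\frac{1}{\rice \thseaice} f_1(\tu, \thseaice)$, $S_h(\thseaice, \ta)$ and $S_a(\thseaice, \ta)$, the uniform lower bounds $\thseaice_i \ge \nicefrac{\kappa}{2}$ and $\ta_i \ge \nicefrac{\alpha}{2}$ confine their arguments to a compact set on which the maps are $\rC^1$, hence Lipschitz; the differences are therefore dominated pointwise by $|\tu_1 - \tu_2| + |\thseaice_1 - \thseaice_2| + |\ta_1 - \ta_2|$ in $\rL^\infty(\cD)$ and, after integration, by $\|\tw_1 - \tw_2\|_{\tsX_\gamma}$.

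The main obstacle is the non-smooth case distinction defining $S_a$ in \eqref{eq:S_a}. This is resolved by rewriting
\begin{equation*}
    S_a(\thseaice, \ta) = \mdsone_{\{f(0) > 0\}} \frac{f(0)}{\kappa}(1-\ta) + \frac{\ta}{2 \thseaice} \min\{0, S_h(\thseaice, \ta)\} ,
\end{equation*}
where the truncation $x \mapsto \min\{0,x\}$ is $1$-Lipschitz, so the global Lipschitz property follows from that of $S_h$ on the admissible set together with the uniform lower bound $\thseaice_i \ge \nicefrac{\kappa}{2}$. A second mild issue is the quadratic oceanic drag $\tocean(\tu) = \rocean \Cocean |\Uocean - \tu|(\Uocean - \tu)$, which is only locally Lipschitz in $\tu$; the uniform $\rC^1(\overline{\cD})$-bound on $\tu_i$ supplied by the ball in $\tsX_\gamma$ keeps us in that locally Lipschitz regime. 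Concatenating the three estimates yields the desired bound with a constant $L$ depending only on $R_0$, $\|\tw_0\|_{\tsX_\gamma}$ and $T$, but not on $\tau \in [0,T]$, and as in \autoref{lem:lipschitz systemmatrix} we may choose $T > 0$ sufficiently small to absorb the $T$-factor coming from the Lipschitz estimate on $Y_1 - Y_2$.
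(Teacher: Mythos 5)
Your proposal is correct and follows essentially the same route as the paper, which states this lemma without a written proof and merely points to the strategy of \autoref{lem:lipschitz systemmatrix} (telescoping plus the estimates of \autoref{rmk:procedure Z and Y from xi and Omega} and \autoref{prop:dep Z and Y on xi and Omega}) together with the sea ice estimates of \cite[Lemma~6.2]{BDHH:22}. The only difference is that you replace the citation for the pointwise nonlinearities $f_1$, $S_h$, $S_a$ by self-contained local-Lipschitz arguments, including the $\min\{0,\cdot\}$ rewriting of $S_a$ and the treatment of the quadratic drag, which is a welcome elaboration rather than a different approach.
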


The treatment of $\tsG_2$ as in \eqref{eq:G2 transformed} is more involved, relying on a nonlinear complex interpolation result, see \autoref{prop:berghvariante}.

\begin{lem}\label{lem:treatment of G2}
    Let $p,q \in (1,\infty)$ satisfy \eqref{eq:condition p and q}, and consider $\tw_0 \in V$ fixed.
    Then there exists $R_0 > 0$ and a constant $L > 0$ so that $\overline{B}_{\tsX_\gamma}(\tw_0,R_0) \subset V$ and
    \begin{equation*}
        | \tsG_2(\tau,\tw_1) - \tsG_2(\tau,\tw_2) | \le L(R_0,\| \tw_0 \|_{\tsX_\gamma}) \cdot \| \tw_1 - \tw_2 \|_{\tsX_\gamma}        
    \end{equation*}
    is valid for all $\tw_1, \tw_2 \in \tsX_\gamma$ with $\| \tw_i - \tw_0 \|_{\tsX_\gamma} \le R_0$, $i=1,2$, and for $\tau \in [0,T]$, where $T>0$ is sufficiently small.
\end{lem}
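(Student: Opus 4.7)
The plan is to decompose the difference $\tsG_2(\tau,\tw_1) - \tsG_2(\tau,\tw_2)$ into three pieces, corresponding to the dependence through (i) $Q$, which enters via $\tF = Q^\T F$ and via the conjugation in the transformed stress tensor $\sT_\delta = Q^\T \sigd Q$, (ii) the chart $Y$, which enters $\sT_\delta$ through derivatives of $Y$, and (iii) the direct pointwise dependence of $\sigd$ on $(\nabla \tu,\thseaice,\ta)$. The initial data $\tw_i \in V$ have time-independent $(\xi_i,\Omega_i)$, so the $\rW^{1,p}$-norms of the pair reduce to $|\xi_i| + |\Omega_i|$ and are controlled by $R_0 + \|\tw_0\|_{\tsX_\gamma}$. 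Pieces (i) and (ii) are then handled by \autoref{prop:dep Z and Y on xi and Omega}, which gives uniform bounds on $Q$, $Z$, $Y$ and their first few derivatives together with Lipschitz estimates of order $T(|\xi_1-\xi_2| + |\Omega_1-\Omega_2|) \leq C T \|\tw_1 - \tw_2\|_{\tsX_\gamma}$, where $C$ depends only on $R_0$ and $\|\tw_0\|_{\tsX_\gamma}$.

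For piece (iii), I exploit that \eqref{eq:condition p and q} implies $1 - \frac{2}{p} - \frac{1}{q} > 0$, so by \autoref{lem:trace space} and the trace theorem in Besov spaces, the restrictions $\nabla \tu|_\Gamma$, $\thseaice|_\Gamma$, $\ta|_\Gamma$ of elements in $\overline{B}_{\tsX_\gamma}(\tw_0,R_0)$ are uniformly bounded in $\rL^\infty(\Gamma)$. Since $\trid(\eps) \geq \sqrt{\delta}$ and $\thseaice \geq \kappa > 0$ on $V$, the map $(\eps,h,a) \mapsto \sigd(\eps,h,a)$ is smooth on this bounded set with bounded derivatives, hence pointwise Lipschitz on $\Gamma$. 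To lift this pointwise estimate to one in the Besov trace norm appearing in $\|\cdot\|_{\tsX_\gamma}$, I apply \autoref{prop:berghvariante} with $(E_0^k,E_1^k) = (\rL^q(\cD),\rW^{2,q}(\cD))$ (with boundary conditions inherited from $\tsX_1$), so that the complex interpolation at $\theta = 1 - \frac{1}{p}$ recovers $\rB_{qp}^{2-2/p}(\cD)$, and target couples $(F_0,F_1)$ chosen so that $F_\theta$ embeds continuously into $\rL^1(\Gamma)$ via the trace. The nonlinearity is $N(\tu,(\thseaice,\ta)) = \sigd(\nabla \tu,\thseaice,\ta)|_\Gamma$, whose analyticity in the strip follows from the polynomial structure of $\S\eps$ and from the real-analyticity of $\eps \mapsto 1/\trid(\eps)$ guaranteed by $\delta > 0$. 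A monotone modulus $\mu$ at the endpoints is produced by standard pointwise multiplier and composition estimates. Integrating the resulting Besov trace bound over the compact interface $\Gamma$ against $\tn$ and $y^\perp \tn$ then yields the Lipschitz estimate for $\tsG_2$.

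The main obstacle is the concrete implementation of \autoref{prop:berghvariante} for the rational-analytic nonlinearity $\sigd$: identifying the target couple $(F_0,F_1)$ so that $F_\theta$ is precisely the trace space arising from $\tsX_\gamma$, and verifying the analyticity and monotone modulus hypotheses of $\sF(F_0,F_1)$ for this specific form. The regularization $\delta > 0$ plays a decisive role throughout, ensuring that $1/\trid$ is real-analytic rather than merely Lipschitz, which is what allows Bergh's framework to apply. Assembling the bounds from pieces (i), (ii) and (iii) then gives the desired estimate with Lipschitz constant depending only on $R_0$ and $\|\tw_0\|_{\tsX_\gamma}$, provided $T > 0$ is chosen small enough that the $T$-dependent contributions from \autoref{prop:dep Z and Y on xi and Omega} are absorbed.
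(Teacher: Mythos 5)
Your overall architecture --- isolate the $Q$-conjugation and handle it with the estimates behind \autoref{prop:dep Z and Y on xi and Omega}, reduce the boundary integrals in $\tsG_2$ to an interior norm of $\sT_\delta(\tw_1)-\sT_\delta(\tw_2)$, and obtain the Lipschitz bound for the quasilinear stress via the nonlinear interpolation result \autoref{prop:berghvariante} --- matches the paper's. But the concrete interpolation setup you propose does not work, and it is exactly the step you flag as the ``main obstacle''. You take the domain couple $(E_0,E_1)=(\rL^q(\cD),\rW^{2,q}(\cD))$ and claim that complex interpolation at $\theta=1-\nicefrac{1}{p}$ recovers $\rB_{qp}^{2-\nicefrac{2}{p}}(\cD)$. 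It does not: $[\rL^q,\rW^{2,q}]_\theta=\rH^{2\theta,q}$, whereas the components of the trace space $\tsX_\gamma$ are the Besov spaces $\rB_{qp}^{2-\nicefrac{2}{p}}(\cD)$ arising from \emph{real} interpolation; for $p\neq q$ these do not coincide, and \autoref{prop:berghvariante} is a complex-interpolation statement. Moreover, the relevant target exponent is not $1-\nicefrac{1}{p}$ at all: what is needed is a Lipschitz estimate for $\sT_\delta$ in $\rW^{\eps+\nicefrac{1}{q},q}(\cD;\R^{2\times 2})$, just above the trace threshold, since this is the norm through which the operator $\sJ$ controls the interface integrals.

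The paper resolves this by choosing \emph{Besov} endpoints for the inputs, $E_0=\rB^{s}_{qp}(\cD)^4\times\R^3$ and $E_1=\rB^{s+1}_{qp}(\cD)^4\times\R^3$ with $s>1+\nicefrac{2}{q}$ (so both embed into $\rC^1(\overline{\cD})$ and the endpoint Lipschitz bounds for $\sigma_\delta$ in $\rL^q$ and $\rW^{1,q}$ hold with a polynomial, monotone modulus), together with targets built from $\rL^q$ and $\rW^{1,q}$; complex interpolation then yields $E_\theta=\rB^{s+\theta}_{qp}(\cD)^4\times\R^3$ and $F_\theta=\rH^{\theta,q}\hra\rW^{\theta,q}$ (using $q>2$), and one sets $\theta=\eps+\nicefrac{1}{q}$. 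The hypothesis \eqref{eq:condition p and q} enters precisely to reconcile $s>1+\nicefrac{2}{q}$ with $s+\eps+\nicefrac{1}{q}\le 2-\nicefrac{2}{p}$, which is what gives $\tsX_\gamma\hra E_\theta$ and closes the argument. Your proposal never performs this parameter bookkeeping, and with your couples there is no exponent at which the hypotheses of \autoref{prop:berghvariante} and the embedding from $\tsX_\gamma$ are simultaneously available. Two further points: the proposition is applied to the shifted map $S(\tw_1,\tw_2)=\sT_\delta(\tw_1+\tw_2)-\sT_\delta(\tw_2)$ evaluated at $(\tw_1-\tw_2,\tw_2)$, which is how endpoint Lipschitz bounds become a Lipschitz bound in the interpolated norm; and the ``lift'' goes from interior endpoint estimates to an intermediate interior norm whose trace is then taken, not from a pointwise estimate on $\Gamma$ up to a Besov norm as your piece (iii) suggests.
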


\begin{proof}
To simplify the notation, we define the operator
\begin{equation*}
    \sJ \colon \rW^{\eps + \nicefrac{1}{q},q}(\cD;\R^{2 \times 2}) \to \R^3, \quad g \mapsto \begin{pmatrix}
	\frac{1}{\mbody}\int_{\Gamma} g \tn(y) \,\mathrm{d} S\\
	I^{-1}\int_{\Gamma} y^\perp g \tn(y) \,\mathrm{d} S
\end{pmatrix},
\end{equation*}
for $0 < \eps < 1 - \frac{1}{q}$.
The boundedness of the trace operator $\gamma \colon \rW^{\nicefrac{\eps}{2} + \nicefrac{1}{q},q}(\cD) \to \rL^q(\partial \cD)$ yields that
\begin{equation}\label{eq:estimateofJ(g)}
	| \sJ(g) | \le C \| g \|_{\rW^{\eps + \nicefrac{1}{q},q}(\cD;\R^{2 \times 2})}, \quad g \in \rW^{\eps + \nicefrac{1}{q},q}(\cD;\R^{2 \times 2}).
\end{equation}
For $p,q \in (1,\infty)$ with $\frac{2}{p} + \frac{3}{q} < 1 - \eps$, we consider $s \in (1,2)$ with $s > \frac{2}{q} + 1$, and the resulting embedding
\begin{equation*}
    \rB_{qp}^s(\cD) \hra \rC^1(\overline{\cD}) \hra \rW^{1,q}(\cD),
\end{equation*}
see also \eqref{eq:emb Besov C1}, yields that
\begin{equation}\label{eq:Lipschitz sigma_delta 1}
    \begin{aligned}
        \| \sigma_\delta(\tv_1) - \sigma_\delta(\tv_2) \|_{\rL^q(\cD;\R^{2 \times 2})}
	    &\le C (1 + \| \tv_1 \|_{\rC^1(\cD)^4} + \| \tv_2 \|_{\rC^1(\cD)^4}) \| \tv_1 - \tv_2 \|_{\rC^1(\cD)^4}\\
	    &\le C(1 + \| \tv_1 \|_{\rB_{qp}^s(\cD)^4} + \| \tv_2 \|_{\rB_{qp}^s(\cD)^4}) \| \tv_1 - \tv_2 \|_{\rB_{qp}^s(\cD)^4}.
    \end{aligned}
\end{equation}
Analogously, we obtain that
\begin{equation}\label{eq:norm sigma_delta}
    \| \sigma_\delta(\tv_i) \|_{\rL^q(\cD;\R^{2 \times 2})} \le C(1 + \| \tv_i \|_{\rC^1(\cD)^4}) \| \tv_i \|_{\rC^1(\cD)^4} \le C (1 + \| \tv_i \|_{\rB_{qp}^s(\cD)^4}) \| \tv_i \|_{\rB_{qp}^s(\cD)^4}
\end{equation}
Moreover, for $(\xi_i,\Omega_i) \in \R^3$, we argue again that it is in particular valid that $(\xi_i,\Omega_i) \in \rW^{1,p}(0,T;\R^2) \times \rW^{1,p}(0,T)$, so we are in the framework of \autoref{rmk:procedure Z and Y from xi and Omega}.
Invoking the proof of \cite[Lemma~6.1]{GGH:13} and especially equation (6.2) in the aforementioned reference, we infer that
\begin{equation}\label{eq:estimates Q}
\begin{aligned}
    \| Q_i \|_{\rL^\infty(0,T;\R^{2 \times 2})} + \left\| Q_i^\T \right\|_{\rL^\infty(0,T;\R^{2 \times 2})} &\le C \left(1 + T(|\xi_i| + |\Omega_i|)^2\right), \quad \text{and}\\
    \| Q_1 - Q_2 \|_{\rL^\infty(0,T;\R^{2 \times 2})} &\le C T |\Omega_1 - \Omega_2|.
\end{aligned}
\end{equation}
Recalling from \autoref{sec:coordinate transform} that $\sT_\delta(\tu(t,y),\thseaice(t,y),\ta(t,y)) = Q^\T(t) \sigd(\tu(t,y),\thseaice(t,y),\ta(t,y)) Q(t)$ and making use of \eqref{eq:Lipschitz sigma_delta 1}, \eqref{eq:norm sigma_delta} as well as \eqref{eq:estimates Q}, we derive that
\begin{equation}\label{eq:est sT Lq}
    \| \sT_\delta(\tw_1) - \sT_\delta(\tw_2) \|_{\rL^q(\cD;\R^{2 \times 2})} \le C p(\| \tw_1 \|_{\rB_{qp}^s(\cD)^4 \times \R^3},\| \tw_2 \|_{\rB_{qp}^s(\cD)^4 \times \R^3}) \| \tw_1 - \tw_2 \|_{\rB_{qp}^s(\cD)^4 \times \R^3}
\end{equation}
for a suitable polynomial $p(x_1,x_2)$ with nonnegative coefficients in view of \eqref{eq:Lipschitz sigma_delta 1}, \eqref{eq:norm sigma_delta} and \eqref{eq:estimates Q}.

Similarly, it follows that
\begin{equation}\label{eq:est sT W1q}
    \| \sT_\delta(\tw_1) - \sT_\delta(\tw_2) \|_{\rW^{1,q}(\cD;\R^{2 \times 2})} \le C p(\| \tw_1 \|_{\rB_{qp}^{s+1}(\cD)^4 \times \R^3},\| \tw_2 \|_{\rB_{qp}^{s+1}(\cD)^4 \times \R^3}) \| \tw_1 - \tw_2 \|_{\rB_{qp}^{s+1}(\cD)^4 \times \R^3}
\end{equation}
for the same polynomial $p(x_1,x_2)$ as in \eqref{eq:est sT Lq}.

We define $S(\tw_1,\tw_2) := \sT_\delta(\tw_1 + \tw_2) - \sT_\delta(\tw_2)$, and we set $F_0 = \rL^q(\cD;\R^{2 \times 2}) \times \R^3$, $F_1 = \rW^{1,q}(\cD;\R^{2 \times 2})\times \R^3$, $E_0 = \rB_{qp}^s(\cD)^4 \times \R^3$ and $E_1 = \rB_{qp}^{s+1}(\cD)^4 \times \R^3$ for suitable $s \in (1,2)$ with $s > \frac{2}{q} + 1$.
Making use of \eqref{eq:est sT Lq} and \eqref{eq:est sT W1q}, we infer that for $\tw_1, \tw_2 \in E_j$ it holds that
\begin{equation*}
\begin{aligned}
    \| S(\tw_1,\tw_2) \|_{F_j} &= \| \sT_\delta(\tw_1 + \tw_2) - \sT_\delta(\tw_2) \|_{F_j}\\
    &\le C p(\| \tw_1 + \tw_2 \|_{E_j},\| \tw_2 \|_{E_j}) \| \tw_1 \|_{E_j}\\
    &\le C \Tilde{p}(\| \tw_1 \|_{E_j}, \| \tw_2 \|_{E_j}) \| \tw_1 \|_{E_j}
\end{aligned}
\end{equation*}
for another polynomial $\Tilde{p}(x_1,x_2)$ with nonnegative coefficients by virtue of the respective property of $p(x_1,x_2)$ from \eqref{eq:est sT Lq} and the triangle inequality.
Consequently, we set
\begin{equation}\label{eq:shape of omega}
    \mu(\| \tw_1 \|_{E_j}, \| \tw_2 \|_{E_j}) := C \Tilde{p}(\| \tw_1 \|_{E_j},\| \tw_2 \|_{E_j}) \| \tw_1 \|_{E_j}.
\end{equation}
The shape of $\mu$ from \eqref{eq:shape of omega} as a polynomial with nonnegative coefficients yields directly that it is continuous and positive on $\R_{\ge 0} \times \R_{\ge 0}$ and that it satisfies the monotonicity property from \autoref{prop:berghvariante}, so it lies within the scope of the latter proposition.

In order to apply this proposition, it remains to verify that $S(\tw_1,\tw_2) \in \sF(F_0,F_1)$ holds for $\tw_1, \tw_2 \in \sF(E_0,E_1)$.
If $S$ would only depend on one variable, then it would suffice to show that $S$ is Fr\'{e}chet-differentiable from $E_0$ to $F_0$ and continuous from $E_1$ to $F_1$ by virtue of 
$E_1 \hra E_0$ and $F_1 \hra F_0$, see \cite[Section~2]{Mal:89}.
The latter embeddings also result in $\Sigma_E = E_0 + E_1 = E_0$ as well as $\Sigma_F = F_0 + F_1 = F_0$, where the norm of the respective sums are equivalent to the norm of $E_0$ and $F_0$, respectively.

Let now $\tw_1, \tw_2 \in \sF(E_0,E_1)$.
Similarly as in Sections~6 and 7 of \cite{BDHH:22}, we argue that $\sigma_\delta$ is Fr\'{e}chet-differentiable from $E_0$ to $F_0$. 
Additionally recalling the shape of $\sT_\delta$ from \autoref{sec:coordinate transform} and observing that $Q$ and $Q^\T$ only depend on time and are independent of the spatial variables, we argue that $\sT_\delta$ is also Fr\'{e}chet-differentiable from $E_0$ to $F_0$.
It readily follows that $S$ is Fr\'{e}chet-differentiable from $\tilde{E}_0 := E_0 \times E_0$ to $F_0$ provided $\tilde{E}_0$ is equipped with a suitable norm.
A simple argument also reveals that $\tw = (\tw_1,\tw_2)$ is holomorphic in $\tilde{E}_0$ on $0 < \re z < 1$.
As the composition of a holomorphic function and a Fr\'{e}chet-differentiable operator is holomorphic, 
we deduce that $S(\tw_1,\tw_2)$ is holomorphic in $F_0$ on $0 < \re z < 1$.
Likewise, one can verify the other aspects of $S(\tw_1,\tw_2) \in \sF(F_0,F_1)$.

It is thus justified to employ \autoref{prop:berghvariante}, and doing so, we infer that
\begin{equation*}
    \| S(\tw_1,\tw_2) \|_{F_\theta} \le C \Tilde{p}(\| \tw_1 \|_{E_\theta},\| \tw_2 \|_{E_\theta}) \| \tw_1 \|_{E_\theta}.
\end{equation*}

Reproducing the above estimate with $S(\tw_1 - \tw_2,\tw_2)$, we get for another polynomial $p_1(x_1,x_2)$ with nonnegative coefficients
\begin{equation}\label{eq:prelimestimatesigmadelta}
	\| \sT_\delta(\tw_1) - \sT_\delta(\tw_2) \|_{F_\theta} = \| S(\tw_1 - \tw_2,\tw_2) \|_{F_\theta} \le C p_1 (\| \tw_1 \|_{E_\theta},\| \tw_2 \|_{E_\theta}) \| \tw_1 - \tw_2 \|_{E_\theta}.
\end{equation}

Next, we observe that $F_\theta = \rH^{\theta,q}(\cD;\R^{2 \times 2})$ and $E_\theta = \rB_{q p}^{s + \theta}(\cD)^4 \times \R^3$.
The assumptions on $p$ and $q$ imply that $q > 2$, so it is also valid that $\rH^{\theta,q} \hra \rW^{\theta,q}$, see e.g.\ Example~2.18 in \cite{Lun:18} for the embedding on $\R^n$ and 
use the boundary regularity to transfer this result to the present setting via an extension operator.
Plugging $\theta = \eps + \frac{1}{q}$ into \eqref{eq:prelimestimatesigmadelta}, we infer that
\begin{equation}\label{eq:finalestimateTdelta}
    \begin{aligned}
    	&\quad\| \sT_\delta(\tw_1) - \sT_\delta(\tw_2) \|_{\rW^{\eps + \nicefrac{1}{q},q}(\cD;\R^{2 \times 2})}\\
     &\le C p_1(\| \tw_1 \|_{\rB_{q p}^{s + \eps + \nicefrac{1}{q}}(\cD)^4 \times \R^3},\| \tw_2 \|_{\rB_{q p}^{s + \eps + \nicefrac{1}{q}}(\cD)^4\times \R^3}) \| \tw_1 - \tw_2 \|_{\rB_{q p}^{s + \eps + \nicefrac{1}{q}}(\cD)^4\times \R^3}.
    \end{aligned}
\end{equation}

It remains to argue that
\begin{equation*}
    \tsX_\gamma \hra \rB_{q p}^{2 - \nicefrac{2}{p}}(\cD)^4 \times \R^3 \hra \rB_{q p}^{s + \eps + \nicefrac{1}{q}}(\cD)^4 \times \R^3.
\end{equation*}
In fact, the second embedding is implied provided $s \le 2 - \frac{2}{p} - \frac{1}{q} - \eps$.
On the other hand, we recall that $s > 1 + \frac{2}{q}$ has to be ensured, so we can find such $s \in (1,2)$ if
\begin{equation*}
    1 + \frac{2}{q} < 2 - \frac{2}{p} - \frac{1}{q} - \eps, \mbox{ or, equivalently, } \frac{2}{p} + \frac{3}{q} < 1,
\end{equation*}
as $\eps$ is arbitrarily small.
The above condition is guaranteed by assumption.
The first embedding is valid in view of \autoref{lem:trace space}.

Thus, we deduce from \eqref{eq:finalestimateTdelta} that
\begin{equation}\label{eq:ultimateestimateTdelta}
	\| \sT_\delta(\tw_1) - \sT_\delta(\tw_2) \|_{\rW^{\eps + \nicefrac{1}{q},q}(\cD;\R^{2 \times 2})} \le C p_1(\| \tw_1 \|_{\tsX_\gamma},\| \tw_2 \|_{\tsX_\gamma}) \| \tw_1 - \tw_2 \|_{\tsX_\gamma}.
\end{equation}

Combining \eqref{eq:estimateofJ(g)} as well as \eqref{eq:ultimateestimateTdelta} and employing that $\| \tw_i \|_{\tsX_\gamma} \le R_0 + \| \tw_0 \|_{\tsX_\gamma}$, we conclude that
\begin{equation*}
    \begin{aligned}
        | \tsG_2(\tau,\tw_1) - \tsG_2(\tau,\tw_2) |
	&\le C \| \sT_\delta(\tw_1) - \sT_\delta(\tw_2) \|_{\rW^{\eps + \nicefrac{1}{q},q}(\cD;\R^{2 \times 2})}\\
	&\le C p_1(\| \tw_1 \|_{\tsX_\gamma},\| \tw_2 \|_{\tsX_\gamma}) \| \tw_1 - \tw_2 \|_{\tsX_\gamma}\\
	&\le C \Tilde{p}_1(R_0,\| \tw_0 \|_{\tsX_\gamma}) \cdot \| \tw_1 - \tw_2 \|_{\tsX_\gamma}
    \end{aligned}
\end{equation*}
for another polynomial $\Tilde{p}_1$ with nonnegative coefficients.
We emphasize that $\Tilde{p}_1(R_0,\| \tw_0 \|_{\tsX_\gamma})$ may depend on $T$, but it is independent of $\tau$.
\end{proof}

We then get the desired Lipschitz estimate for the complete term $\tsF$.

\begin{cor}\label{cor:Lipschitz estimate rhs F}
Let $p,q \in (1,\infty)$ be such that \eqref{eq:condition p and q} is satisfied.
Then for $\tsF$ as in \eqref{eq:rhs operator}, for $\tw_0 \in V$ and for every $R>0$ such that $\overline{B}(\tw_0,R) \subset V$, there exists $\varphi_R \in \rL^p(0,T)$ with
\begin{equation*}
    \| \tsF(\tau,\tw_1) - \tsF(\tau,\tw_2) \|_{\tsX_0} \le \varphi_R(\tau) \| \tw_1 - \tw_2 \|_{\tsX_\gamma} 
\end{equation*}
for almost all $\tau \in [0,T]$ for $T>0$ sufficiently small and all $\tw_1, \tw_2 \in \tsX_\gamma$ with $\| \tw_i - \tw_0 \|_{\tsX_\gamma} \le R$.
\end{cor}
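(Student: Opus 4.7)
The plan is to decompose $\tsF$ according to \eqref{eq:rhs operator} as
\begin{equation*}
\tsF(\tau,\tw) = \binom{\tsG_1(\tau,\tw)}{\tsG_2(\tau,\tw)} - \tsM(\tau,\xi,\Omega)\tw + (0,0,0,\Omega\xi^\perp,0)^\T
\end{equation*}
and to estimate each of the four summands separately. First choose $R>0$ so small that $\overline{B}_{\tsX_\gamma}(\tw_0,R) \subset V$ and fix $\tw_1,\tw_2 \in \overline{B}_{\tsX_\gamma}(\tw_0,R)$; in particular $(\xi_i,\Omega_i)$ are constants in time with $|\xi_i| + |\Omega_i| \le R + \|\tw_0\|_{\tsX_\gamma}$, so the bounds of \autoref{prop:dep Z and Y on xi and Omega} are uniformly available.

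The $\tsG_1$-contribution is handled directly by \autoref{lem:treatment of G1} with a Lipschitz constant $L_1 = L_1(R,\|\tw_0\|_{\tsX_\gamma})$ that is independent of $\tau$. For $\tsG_2$ I would split further into the nonlinear surface integrals involving $\sT_\delta$ and the forcing terms $\frac{1}{\mbody}\tF(\tau)$ and $I^{-1}\tN(\tau)$. The former is taken care of by \autoref{lem:treatment of G2} with a time-independent constant $L_2$. Since $\tN(\tau) = N(\tau)$ does not depend on $\tw$, it cancels in the difference; however, $\tF(\tau) = Q^\T(\tau) F(\tau)$ depends on $\tw$ through $Q^\T$, and using the estimate $\|Q_1^\T - Q_2^\T\|_{\rL^\infty(0,T;\R^{2\times 2})} \le C T |\Omega_1 - \Omega_2|$ from \eqref{eq:estimates Q} yields a pointwise-in-$\tau$ bound of the form $C|F(\tau)| \cdot \|\tw_1 - \tw_2\|_{\tsX_\gamma}$. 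This produces the only genuinely $\rL^p$-in-time Lipschitz factor $\tau \mapsto |F(\tau)|$.

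For the $\tsM$-term I would use the telescoping decomposition
\begin{equation*}
\tsM(\tau,\xi_1,\Omega_1)\tw_1 - \tsM(\tau,\xi_2,\Omega_2)\tw_2 = \tsM(\tau,\xi_1,\Omega_1)(\tw_1-\tw_2) + \bigl(\tsM(\tau,\xi_1,\Omega_1) - \tsM(\tau,\xi_2,\Omega_2)\bigr)\tw_2.
\end{equation*}
The first summand is estimated via the uniform $\rL^\infty$-bound on $\partial_t Y_1$ from \autoref{prop:dep Z and Y on xi and Omega} together with the embedding $\tsX_\gamma \hra \rW^{1,q}(\cD)^4 \times \R^3$ which follows from \autoref{lem:trace space} and \eqref{eq:emb Besov C1}; the second uses the Lipschitz estimate for $\partial_t(Y_1 - Y_2)$ in the same proposition. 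Finally, $(\xi,\Omega) \mapsto \Omega\xi^\perp$ is a continuous bilinear map, hence locally Lipschitz on $\overline{B}_{\tsX_\gamma}(\tw_0,R)$ with constant $\le C(R + \|\tw_0\|_{\tsX_\gamma})$.

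Collecting the four estimates and setting $\varphi_R(\tau) := L(R,\|\tw_0\|_{\tsX_\gamma}) + C|F(\tau)|$ gives $\varphi_R \in \rL^p(0,T)$ by the hypothesis $F \in \rL^p(0,T;\R^2)$, and the claim follows. The main technical obstacle is the handling of the forcing term $\tF$: although $F$ itself does not depend on $\tw$, the rotation matrix $Q^\T$ that transforms it does, and it is precisely this non-cancellation which forces $\varphi_R$ to be a genuine $\rL^p$-in-time function rather than a uniform constant. Everything else reduces to \autoref{lem:treatment of G1} and \autoref{lem:treatment of G2} combined with the diffeomorphism regularity supplied by \autoref{prop:dep Z and Y on xi and Omega}.
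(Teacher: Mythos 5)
Your proposal is correct and follows essentially the same route as the paper: decompose $\tsF$ into the summands $\tsG_1$, $\tsG_2$, $-\tsM\tw$ and $(0,0,0,\Omega\xi^\perp,0)^\T$, invoke \autoref{lem:treatment of G1} and \autoref{lem:treatment of G2} for the first two, estimate the $\tsM$-term via \autoref{prop:dep Z and Y on xi and Omega} together with the embedding of $\tsX_\gamma$, and treat $\Omega\xi^\perp$ as a locally Lipschitz bilinear map. The one place where you go beyond the paper is the forcing term: the paper's proof simply concatenates the two lemmas with the estimates \eqref{eq:Omega xi perp Lipschitz} and \eqref{eq:lipschitz tsM} and records that all Lipschitz constants are $\tau$-independent, which tacitly treats $\tF=Q^\T F$ as cancelling in the difference; you instead observe that $Q^\T$ depends on $\tw$ through $(\xi,\Omega)$, apply \eqref{eq:estimates Q}, and thereby isolate the contribution $C\lvert F(\tau)\rvert\,\|\tw_1-\tw_2\|_{\tsX_\gamma}$ as the genuine $\rL^p$-in-time part of $\varphi_R$. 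This is a legitimate refinement rather than a deviation: it explains why the statement is phrased with $\varphi_R\in\rL^p(0,T)$ rather than a constant, and it closes a small gap left open by the first inequality in the proof of \autoref{lem:treatment of G2}.
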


\begin{proof}
Let $\tw_0 \in V$ and consider $R>0$ arbitrary such that $\overline{B}(\tw_0,R) \subset V$.
Moreover, we take $\tw_1, \tw_2 \in V$ with $\| \tw_i - \tw_0 \|_{\tsX_\gamma} \le R$ into account.
Simple calculations reveal that
\begin{equation}\label{eq:Omega xi perp Lipschitz}
	| \Omega_1 \xi_1^\perp - \Omega_2 \xi_2^\perp | \le C(R + | \xi_0 | + | \Omega_0 |) \left| \begin{pmatrix} \xi_1\\ \Omega_1 \end{pmatrix} - \begin{pmatrix} \xi_2\\ \Omega_2 \end{pmatrix} \right|.    
\end{equation}

Before proceeding, we remark again that we are in the setting of \autoref{rmk:procedure Z and Y from xi and Omega} and \autoref{prop:dep Z and Y on xi and Omega} by virtue of $(\xi_i,\Omega_i)$ independent of time, implying that $(\xi_i,\Omega_i) \in \rW^{1,p}(0,T;\R^2) \times \rW^{1,p}(0,T)$ for $\tw_i = (\tu_i,\thseaice_i,\ta_i,\xi_i,\Omega_i) \in V$.
With regard to $\tsM_1$ from \eqref{eq:sM}, we can then argue similarly as in the proof of \autoref{lem:lipschitz systemmatrix} to obtain
\begin{equation}\label{eq:lipschitz tsM}
    \| \tsM_1(\tau;\xi_1,\Omega_1) \tw_1 - \tsM_1(\tau,\xi_2,\Omega_2) \tw_2 \|_{\tsX_0} \le C(R + \| \tw_0 \|_{\tsX_\gamma}) (T+1) \| \tw_1 - \tw_2 \|_{\tsX_\gamma}.
\end{equation}

In total, concatenating \autoref{lem:treatment of G1}, \autoref{lem:treatment of G2}, \eqref{eq:Omega xi perp Lipschitz} and \eqref{eq:lipschitz tsM}, we derive the assertion of the corollary, where we observe that the Lipschitz constants in the aforementioned lemmas and equations are independent of $\tau$.
\end{proof}

\subsection{Proof of \autoref{thm:main theorem}}\label{ssec:proof main result}

\

Combining \autoref{lem:continuity in A1} and \autoref{lem:lipschitz systemmatrix}, we deduce that (A1) from \autoref{ass:quasilinear thm} is satisfied.
By virtue of \autoref{lem:(i)-(iii) in (A2)} and \autoref{cor:Lipschitz estimate rhs F}, aspect~(A2) from \autoref{ass:quasilinear thm} is also fulfilled.
Invoking the maximal regularity as established in \autoref{prop: max. Reg.}, showing (A3) from \autoref{ass:quasilinear thm}, we conclude by \autoref{prop:quasilinear} the existence and uniqueness of a local strong solution to \eqref{eq:ACP} on the fixed domain.
The solution lies in the corresponding maximal regularity space, namely in $\rL^p(0,T;\tsX_1) \cap \rW^{1,p}(0,T;\tsX_0)$.

From $(\xi,\Omega) \in \rW^{1,p}(0,T,\R^2 \times \rW^{1,p}(0,T)$, we obtain $\eta$, $\omega$ and $Z$ as described in \autoref{rmk:procedure Z and Y from xi and Omega}.
In total, this shows \autoref{thm:main thm transformed}.

Performing the backward change of variables and coordinates given in \autoref{sec:coordinate transform} and explained in \autoref{rmk:procedure Z and Y from xi and Omega}, we derive our main result \autoref{thm:main theorem} from \autoref{thm:main thm transformed}.
The solution must be unique as a consequence of the uniqueness implied by the quasilinear existence result and the uniqueness of the transformation. 

\medskip 

{\bf Acknowledgements}
We thank the anonymous referees for pointing out the references \cite{MT:17, HMTT:19, MSTT:19} as well as for useful comments.

\medskip

\end{document}